\numberwithin{equation}{section}
\newtheorem{thm}{Theorem}[section]
\newtheorem{lem}[thm]{Lemma}
\newtheorem{prop}[thm]{Proposition}
\newtheorem{cor}[thm]{Corollary}
\newtheorem{thmintro}{Theorem}
\newtheorem{corintro}[thmintro]{Corollary}
\theoremstyle{definition}
\newtheorem{remark}[thm]{Remark}
\newtheorem{definition}[thm]{Definition}
\DeclareMathOperator{\Aut}{Aut}
\DeclareMathOperator{\Out}{Out}
\DeclareMathOperator{\Sym}{Sym}
\DeclareMathOperator{\Stab}{Stab}
\DeclareMathOperator{\Fix}{Fix}
\DeclareMathOperator{\dist}{dist}
\DeclareMathOperator{\Isom}{Isom}
\newcommand{\bigast}{\mathop{\scalebox{1.5}{\raisebox{-0.2ex}{$\ast$}}}}
\newcommand{\dash}{\nobreakdash-\hspace{0pt}}
\DeclareMathOperator{\Ch}{Ch}
\DeclareMathOperator{\proj}{proj}
\begin{document}

\title[Lattice envelopes of RAAGs]{Lattice envelopes of right-angled Artin groups}

\author{Pierre-Emmanuel \textsc{Caprace}}
\address{Pierre-Emmanuel Caprace, UCLouvain,
Institut de Recherche en Mathématiques et Physique (IRMP),
Chemin du Cyclotron 2, box L7.01.02,
1348 Louvain-la-Neuve, Belgium}
\email{pe.caprace@uclouvain.be}

\author{Tom  \textsc{De Medts}}
\address{Tom De Medts, Ghent University, Department of Mathematics, Computer Science and Statistics, Krijgslaan 281, S9, 9000 Gent, Belgium.}
\email{tom.demedts@ugent.be}

\thanks{}

\begin{abstract}
Let $\Gamma$ be a finite simplicial graph with at least two vertices, and let $G(\Gamma)$ be the associated right-angled Artin group. We describe a locally compact group $\mathcal U$ containing $G(\Gamma)$ as a cocompact lattice. If $\Gamma$ is not a join (i.e., the complement graph is connected), then the group $\mathcal U$  is non-discrete, almost simple, but not virtually simple: it has a smallest normal subgroup $\mathcal U^+$ which is an open simple subgroup, and the quotient $\mathcal U/\mathcal U^+$ is isomorphic to the right-angled Coxeter group $W(\Gamma)$. 
Under suitable assumptions on $\Gamma$, we rely on work by Bader--Furman--Sauer and Huang--Kleiner to show that $\mathcal U \rtimes \Aut(\Gamma)$ is the universal lattice envelope of $G(\Gamma)$: for every lattice envelope $H$ of $G(\Gamma)$, there is a continuous proper homomorphism $H \to \mathcal U \rtimes \Aut(\Gamma)$.  In particular, no lattice envelope of $G(\Gamma)$ is virtually simple. We also show that no locally compact group quasi-isometric to $G(\Gamma)$ is virtually simple. This contrasts with the case of free groups. The group $\mathcal U$ is a universal automorphism group of the Davis building of $G(\Gamma)$, with prescribed local actions.  As an application, we describe the algebraic structure of the full automorphism group of the Cayley graph of $G(\Gamma)$ with respect to its standard generating set. 
\end{abstract}

\maketitle


\begin{flushright}
\begin{minipage}[t]{0.60\linewidth}\itshape\small
Catégorique\\
angle droit du caractère\\
de l’esprit du coeur\\
Je me suis miré dans ce caractère\\
et m’y suis trouvé\\
trouvé chez moi
\vspace{2mm}

\hfill\upshape \textemdash Le Corbusier,  \emph{Le poème de l'angle droit}, 1955.
\end{minipage}
\end{flushright}

\section{Introduction}

Following H.~Furstenberg~\cite{Fur67}, we define a  \textbf{lattice envelope} of a countable discrete group $\Lambda$ as a locally compact group $H$ containing $\Lambda$ as a lattice. The problem of determining all possible lattice envelopes of a given group $\Lambda$, up to a natural equivalence relation, is proposed  and thoroughly examined by Bader--Furman--Sauer in their recent work \cite{BFS}. In this paper, we address that question in the case where $\Lambda$ is a right-angled Artin group (abbreviated \textbf{RAAG} in the following). 

Recall that the \textbf{right-angled Artin group} associated with a graph $\Gamma$ is a group defined by generators and relations, having a generator  $x_s$ for each vertex $s$ of $\Gamma$, and the relator $[x_s, x_t]= x_s^{-1} x_t^{-1} x_s x_t $ for each pair of vertices $\{s, t\}$ forming an edge of $\Gamma$. The class of RAAGs is of fundamental importance in geometric group theory and its applications, notably  through the notion of special cube complexes introduced by Haglund--Wise in \cite{HagWis}. 

It is well-known that the right-angled Artin group $G(\Gamma)$ acts regularly on the chambers of a right-angled building $\mathcal B$ of type $\Gamma$, see \cite[Corollary~11.7]{Davis}. It follows that the group $G(\Gamma)$ may be interpreted as a \textit{universal group} of automorphisms of $\mathcal B$ with \textit{prescribed local actions}. The notion of \textbf{universal groups with prescribed local actions} was first introduced by Burger--Mozes \cite{BuMo1} for groups acting locally finite trees, later extended to arbitrary trees by Smith~\cite{Smith}, and then from trees to right-angled buildings by De Medts--Silva--Struyve~\cite{DMSS} and Bossaert--De Medts~\cite{BDM21}. Fixing for each vertex $s \in S = V(\Gamma)$ an infinite cyclic group $X_s$ with a regular permutation action on an infinite set~$\Omega_s$, and setting $\mathbf X = (X_s)_{s \in S}$, the  associated universal group 
$\mathcal U(\mathbf X) \leq \Aut(\mathcal B)$ is naturally isomorphic to $G(\Gamma)$; see Proposition~\ref{prod:graph-products-are-univ-groups}.  Now,  for each vertex $s$, let $Y_s \leq \Sym(\Omega_s)$ be the unique subgroup   isomorphic to the infinite dihedral group~$D_\infty$, and containing $X_s$ as a subgroup of index~$2$. Set $\mathbf Y = (Y_s)_{s \in S}$. Then we have  $G(\Gamma) \cong \mathcal U(\mathbf X) \leq \mathcal U(\mathbf Y)$. It turns out that  the latter inclusion is a cocompact lattice envelope: $\mathcal U(\mathbf Y)$ is a totally disconnected locally compact group containing $\mathcal U(\mathbf X)$ as a cocompact lattice; see Proposition~\ref{prop:LattEnv}. Moreover, the group $\mathcal U(\mathbf Y)$ is non-discrete as soon as $\Gamma$ is not the complete graph (i.e., $G(\Gamma)$ is not abelian). 

The goal of this paper is twofold. The first one is to elucidate the algebraic structure of the lattice envelope $\mathcal U(\mathbf Y)$ of $G(\Gamma)$. The second is to determine to what extent $\mathcal U(\mathbf Y)$ is the largest lattice envelope of $G(\Gamma)$. 

The first goal is achieved in Section~\ref{sec:Univ-Groups}, where general results on the structure of universal groups are obtained, see Theorems~\ref{thm:simplicity} and~\ref{thm:U(F/F+)} (we refer to \cite[Theorem~4.12]{RST} for a closely related result in the context of groups acting on trees). We record the following statement, which is a direct consequence of those:

\begin{thmintro}\label{thmintro:1}
Suppose that the graph $\Gamma$ is irreducible, with at least two vertices. Then the intersection of all non-trivial normal subgroups of  
$G = \mathcal U(\mathbf Y)$ coincides with  the subgroup $G^+$ generated by the stabilizers of the chambers. Moreover, $G^+$ is simple, and  the quotient $G/G^+$ is isomorphic to the Coxeter group $W(\Gamma)$. 
\end{thmintro}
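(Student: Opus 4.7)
The plan is to derive the statement directly from Theorems~\ref{thm:simplicity} and~\ref{thm:U(F/F+)} applied to $\mathbf{F} = \mathbf{Y}$. The irreducibility of $\Gamma$ together with $|V(\Gamma)| \geq 2$ is the natural non-degeneracy hypothesis under which these universal-group results are expected to hold: each local group $Y_s \cong D_\infty$ is non-trivial on the infinite orbit $\Omega_s$, the distinguished subgroup $X_s \leq Y_s$ plays the role of the ``orientation-preserving'' part, and the quotient $Y_s/X_s \cong \mathbb{Z}/2\mathbb{Z}$ is canonically identified with the Coxeter generator at $s$. Invoking Theorem~\ref{thm:simplicity} yields simplicity of $G^+$, while Theorem~\ref{thm:U(F/F+)} delivers the identification $G/G^+ \cong W(\Gamma)$.

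It remains to identify $G^+$ with the intersection of all non-trivial normal subgroups of $G$. Since $G^+$ is itself non-trivial (being non-abelian simple) and normal by construction, the intersection is contained in $G^+$. For the reverse inclusion, let $1 \neq N \trianglelefteq G$. Then $N \cap G^+$ is normal in $G^+$, hence by simplicity equals either $G^+$---in which case $G^+ \leq N$, as desired---or the trivial group. In the latter case, $[N, G^+] \leq N \cap G^+ = \{1\}$, so $N$ centralizes $G^+$; and since $G^+$ is open in $G$ (being generated by the compact open chamber stabilizers), $N$ must be discrete. To finish, I would show that the centralizer $C_G(G^+)$ is trivial, contradicting $N \neq \{1\}$.

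To verify $C_G(G^+) = \{1\}$, the plan is to exploit that $G^+$ contains the chamber-regular subgroup $G(\Gamma) \cong \mathcal U(\mathbf X)$ together with the full local action $X_s$ at every panel, so that the $G^+$-stabilizers of distinct chambers of $\mathcal B$ are distinct subgroups of $G^+$. An element $g \in G$ centralizing $G^+$ conjugates each chamber stabilizer $G^+_C$ to $G^+_{gC}$; from $g G^+_C g^{-1} = G^+_C$ we would deduce $gC = C$ for every chamber $C$, so $g$ fixes $\mathcal B$ pointwise, whence $g = 1$ by faithfulness of $G \leq \Aut(\mathcal B)$.

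The main obstacle is the last step: proving that $G^+$-stabilizers separate chambers of $\mathcal B$. This is a property of the universal construction that should follow from the same local-to-global unfolding arguments used in Section~\ref{sec:Univ-Groups} to establish simplicity of $G^+$, using that each $X_s$ acts freely and transitively on $\Omega_s$ so that the chamber-stabilizer in $G^+$ ``sees'' the combinatorial structure of the building near the chamber in a rigid way. Once this centralizer triviality is secured, the theorem follows by combining it with the two cited structural results.
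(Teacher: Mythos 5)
Your overall plan---deduce everything from \cref{thm:simplicity} and \cref{thm:U(F/F+)} with $\mathbf F = \mathbf Y$---is the paper's route, but you only use half of \cref{thm:simplicity} and then try to re-prove the other half, and that is where the gap lies. The second sentence of \cref{thm:simplicity} already states that \emph{every} non-trivial normal subgroup of $\mathcal U(\mathbf Y)$ contains $\mathcal U(\mathbf Y)^+$ (the hypotheses hold: $\Gamma$ is irreducible and no $Y_s \cong D_\infty$ acts freely on $\Omega_s$); since $G^+$ is itself a non-trivial normal subgroup, the intersection of all non-trivial normal subgroups is exactly $G^+$, with no further argument. Your substitute argument via $C_G(G^+)=\{1\}$ is not wrong in spirit, but its decisive step---that $G^+$-stabilizers separate chambers---is exactly what you leave unproven, and the justification you sketch rests on a false claim: $G^+$ does \emph{not} contain $G(\Gamma)\cong\mathcal U(\mathbf X)$. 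Indeed the image of $\mathcal U(\mathbf X)$ in $G/G^+\cong W(\Gamma)$ is infinite (each generator $x_s$ translates along an $s$-panel and swaps the two $Y_s^+$-parity classes, so it maps onto the Coxeter generator $s$; this is how \cref{cor:discrete-quot} is proved), hence $\mathcal U(\mathbf X)\cap G^+$ has infinite index in $\mathcal U(\mathbf X)$. The separation property can be proved, e.g.\ with the extension result \cite[Prop.~3.8]{BDM21} as in the proof of \cref{prop:+-equal-ddagger}: for $c\neq d$, act on the last panel of a minimal gallery from $c$ to $d$ by the reflection fixing the projection of $c$ and moving $d$, while fixing all chambers projecting to that chamber. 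But as written, your proof is incomplete at its key step, and the detour is unnecessary.

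A second, smaller imprecision concerns the quotient. The conclusion $G/G^+\cong W(\Gamma)$ does follow from \cref{thm:U(F/F+)}, but the relevant local datum is $Y_s^+$, the subgroup generated by the \emph{point stabilizers} (the reflections), not the translation subgroup $X_s$: one must check that $Y_s^+$ has exactly two orbits on $\Omega_s$ (the parity classes) and that $Y_s/Y_s^+\cong \mathbf Z/2\mathbf Z$ acts regularly on them, so that the imploded building $\mathcal B^0$ is the thin building of type $\Gamma$ and $\mathcal U(\mathbf Y/\mathbf Y^+)\cong W(\Gamma)$. Quotienting by $X_s$-orbits, as your identification ``$Y_s/X_s\cong\mathbf Z/2\mathbf Z$'' suggests, would collapse each panel to a single class and give nothing. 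This short computation is the actual content of the deduction and is carried out in the proof of \cref{cor:discrete-quot}.
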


The graph $\Gamma$ is called \textbf{irreducible} if it is not a join, i.e. the complement graph is connected. That condition is equivalent to the condition that the right-angled Artin group $G(\Gamma)$ (or equivalently the Coxeter group $W(\Gamma)$) does not split as a direct product of two non-trivial subgroups. 

Theorem~\ref{thmintro:1} implies that the lattice envelope  $\mathcal U(\mathbf Y)$ of $G(\Gamma)$ is almost simple, but not virtually simple since $W(\Gamma)$ is infinite.

To tackle the second goal, our starting point consists in combining  the work of Bader--Furman--Sauer~\cite{BFS} with   known properties of RAAGs. This  allows us to   show that if $\Gamma$ is connected with at least two vertices, then every lattice envelope $H$ of a finite index subgroup of $G(\Gamma)$ is a cocompact envelope; see Proposition~\ref{prop:envelope->coco:RAAG} (we also refer to \cite[Theorem~10.1]{HH23} for a related statement). It follows that the locally compact group $H$ is quasi-isometric to~$G(\Gamma)$. Quasi-isometric rigidity of RAAGs has been studied extensively, see notably \cite{BKS,Hu17,HK18}. We shall invoke 
the results by Huang--Kleiner \cite{HK18}, which apply under the hypotheses  that $G(\Gamma)$ is not cyclic and that $\Out(G(\Gamma))$ is finite.  The first condition just means that $\Gamma$ contains more than one vertex. The second condition  can be characterized by two simple conditions on the graph $\Gamma$. In order to formulate those, we continue to denote the vertex set of $\Gamma$   by $S$. Moreover, for each $s \in S$, we denote by  $s^\perp$ the collection of  those vertices $t \neq s$ that are adjacent to $s$, and we set $s \cup s^\perp = \{s\} \cup s^\perp$. 
Then $\Out(G(\Gamma))$ is finite if and only if the following two conditions hold (see \cite{Laurence}):

\begin{itemize}
\item[(R1)] For each $s \in S$, the induced graph on $S \setminus (s \cup s^\perp)$ is connected. 

\item[(R2)] For all $s, t \in S$, if $s^\perp \subseteq t  \cup t^\perp$, then $s=t$. 
\end{itemize}

In \cite{CF12,Day12}, condition (R1) is called \textbf{star $2$-connectedness}. A pair $s,t \in S$ with $s^\perp \subseteq t \cup t^\perp$ as in condition (R2) but with $s \neq t$ is called a \textbf{domination pair} in \cite{Day12}.

It is easy to see that a graph satisfying (R1) and (R2) must be connected (see Lemma~\ref{lem:connected} below).	
We will also consider the following extra condition on $\Gamma$:
\begin{itemize}
\item[(R3)] For each vertex $s \in S$, the pointwise stabilizer of $ s \cup s^\perp$ in $\Aut(\Gamma)$ is trivial.
\end{itemize}

The condition (R3)  appears in various earlier references, notably \cite{HagPau_simpl},  \cite{Cla11}, \cite{Hu18} and  \cite{BF23}. A graph $\Gamma$ satisfying (R3) is called \textbf{star-rigid} in \cite{Hu18}.  Explicit examples of graphs satisfying (R1), (R2) and (R3) are provided by $n$-cycles (with $n \geq  5$), by the $1$-skeleta of the cube, the dodecahedron and the icoasahedron, or by the $1$-skeleta of their barycentric subdivisions.
In fact, a ``generic'' finite graph (with respect to the probability model of selecting each edge with probability $\tfrac{1}{2}$) will satisfy (R1), (R2) and (R3). Indeed, \cite[Theorem~5.1]{CF12} shows that generically, $\Out(G(\Gamma))$ is finite and hence (R1) and (R2) hold; on the other hand, almost all finite graphs have trivial automorphism group \cite{ER63}, so also (R3) holds generically. Moreover, the complement graph must also satisfy (R1) and (R2) for the same reasons, hence it is connected (see Lemma~\ref{lem:connected} below). This shows that, asymptotically almost surely, a random finite graph $\Gamma$ satisfies (R1), (R2), (R3) and is irreducible. 	(The statement in \cite[Theorem~5.1]{CF12} allows for a more general probability parameter, and has been further sharpened in \cite{Day12}.)

We shall prove the following (see Theorem~\ref{thm:Coxeter-action} below).

\begin{thmintro}\label{thmintro:2}
Let $\Gamma$ be a finite graph satisfying  (R1),  (R2), (R3), and let    $H$ be a compactly generated locally compact group quasi-isometric to $G(\Gamma)$.

Then there is a continuous homomorphism $H \to W(\Gamma) \rtimes \Aut(\Gamma)$ whose image is of finite index. In particular, $H$ has a discrete quotient commensurable with~$W(\Gamma)$.
%
%
\end{thmintro}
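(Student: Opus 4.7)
The plan is to combine Huang--Kleiner's quasi-isometric rigidity of right-angled Artin groups \cite{HK18} with the structural Theorem~\ref{thmintro:1}. Since $\Gamma$ satisfies (R1) and (R2), Laurence's theorem gives $|\Out(G(\Gamma))|<\infty$, and since $|V(\Gamma)|\ge 2$, the group $G(\Gamma)$ is non-cyclic, so the hypotheses of \cite{HK18} are in force.

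My first step will be to extract from \cite{HK18} a continuous homomorphism $\rho\co H \to \Aut(\mathcal B)$ with compact kernel and cocompact image, where $\mathcal B$ is the Davis building of $G(\Gamma)$. The recipe is standard for QI-rigid groups: a left-invariant proper metric on $H$ turns left-translations into self quasi-isometries of $H$, which under the given quasi-isometry with $G(\Gamma)$ and the conclusion of \cite{HK18} are uniformly close to automorphisms of $\mathcal B$; the properness of $\rho$ and the cocompactness of its image then follow from the Milnor--Schwarz lemma applied to the action of $G(\Gamma)$ on $\mathcal B$.

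My second step will be to show that, after passing to a finite-index open subgroup of $H$ if necessary, the image of $\rho$ lies in the closed subgroup $\mathcal U(\mathbf Y)\rtimes\Aut(\Gamma)$ of $\Aut(\mathcal B)$. Condition (R3) is crucial: it ensures that a type-preserving automorphism of $\mathcal B$ fixing a chamber together with all adjacent panels must be trivial, which allows a clean decomposition of (a finite-index subgroup of) $\Aut(\mathcal B)$ as $\mathcal U(\mathbf Y)\rtimes\Aut(\Gamma)$; the $\mathcal U(\mathbf Y)$-factor is pinned down by the universal property with the prescribed infinite dihedral local actions $Y_s$.

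Finally, I will compose $\rho$ with the continuous quotient
\[
q\co \mathcal U(\mathbf Y)\rtimes\Aut(\Gamma)\longrightarrow W(\Gamma)\rtimes\Aut(\Gamma)
\]
supplied by Theorem~\ref{thmintro:1}; this is well defined because $\mathcal U(\mathbf Y)^+$ is characteristic in $\mathcal U(\mathbf Y)$ and is therefore stable under $\Aut(\Gamma)$. Since $\rho$ has cocompact image and $W(\Gamma)\rtimes\Aut(\Gamma)$ is discrete, $q\circ\rho$ has image of finite index, which yields both claims. The principal obstacle is the second step: translating the QI-rigidity output, most naturally expressed as actions on the Davis complex or the extension graph, into a homomorphism landing in the specific locally compact group $\mathcal U(\mathbf Y)\rtimes\Aut(\Gamma)$ requires (R3) to eliminate spurious diagram automorphisms fixing stars, together with a careful verification that the induced local action at each panel does not escape the dihedral group $Y_s$.
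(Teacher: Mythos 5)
There is a genuine gap, located exactly where you flag the ``principal obstacle'': your second step does not work, and it is not how the paper proceeds. When $H$ is merely quasi-isometric to $G(\Gamma)$ (rather than containing it as a lattice), the Huang--Kleiner action on the chamber graph $\mathscr C(\mathcal B)$ gives you no control of the local actions at panels: the containment of an overgroup in a universal group with prescribed local action (Proposition~\ref{prod:graph-products-are-univ-groups}(ii)) is proved under the hypothesis $G_\Gamma(\mathbf X)\leq H$ inside $\Aut(\mathcal B)$, and Remark~\ref{rem:transitive} shows this cannot be weakened to transitivity, let alone coboundedness. Concretely, the $H$-equivariant blow-up data consists of maps $h_{\mathcal R}\colon\mathcal R\to\mathbf Z$ that are only finite-to-one, so the local action of $\Stab_H(\mathcal R)$ on a panel is a group that merely maps \emph{onto} a subgroup of $\Isom(\mathbf Z)$, with a possibly large part permuting the fibers (compare the canonical envelopes $\Aut(\mathcal G_n)\cong\bigl(\prod_{\mathbf Z}\Sym(n)\bigr)\rtimes D_\infty$ in the $2$-ended section); there is no reason for it to be conjugate into the dihedral group $Y_s$, and hence no reason for the image of $H$ to land in $\mathcal U(\mathbf Y)\rtimes\Aut(\Gamma)$, even after passing to a finite-index subgroup. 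Condition (R3) does not help here: it yields $\Aut(\mathscr C(\mathcal B))\cong\Aut(\mathcal B)\rtimes\Aut(\Gamma)$ (Lemma~\ref{lem:autom-chamber-graph}), i.e.\ it controls type-permuting automorphisms, not local actions; $\mathcal U(\mathbf Y)$ is a much smaller subgroup of $\Aut(\mathcal B)$ than your claimed decomposition suggests. A secondary issue is your first step: $\mathcal B$ has countably infinite thickness, so $\mathscr C(\mathcal B)$ is not locally finite, the $H$-action on it is not proper, and one cannot speak of a proper map to $\Aut(\mathcal B)$ in the way you do; in the paper, continuity and properness are obtained on the locally finite blown-up graph $|\mathcal B|_{\mathscr H}$ via Cornulier's criterion (Proposition~\ref{prop:continuity}).

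The paper's actual route bypasses $\mathcal U(\mathbf Y)$ and Theorem~\ref{thmintro:1} entirely: it exploits the $H$-equivariant $\mathbf Z$-blow-up data itself. Reducing each $h_{\mathcal R}$ modulo~$2$ gives a blow-up data with values in $\mathbf Z/2\mathbf Z$, whose implosion (Proposition~\ref{prop:implosion}) is the thin building $\mathcal B'$ of $W(\Gamma)$; since every isometry of $\mathbf Z$ either preserves or swaps parity, equivariance of the blow-up data descends to a homomorphism $H\to\widetilde{\Aut(\mathcal B')}\cong W(\Gamma)\rtimes\Aut(\Gamma)$ (Proposition~\ref{prop:implosion-to-Coxeter}), which is continuous because chamber stabilizers are open, and has finite-index image because $\tau$ is surjective and the $H$-action on $\mathcal B$ is cobounded. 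If you want to salvage your outline, you need to replace your second step by this mod-$2$ implosion argument (or some substitute that extracts the parity homomorphism at each panel directly from the equivariant blow-up data rather than from a containment in $\mathcal U(\mathbf Y)\rtimes\Aut(\Gamma)$, which is unavailable in the QI setting).
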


In the special case where $H$ is finitely generated  and where $\Gamma$ satisfies  (R1), (R2) and (R3) and does not contain any induced subgraph isomorphic to a $4$-cycle, it follows from \cite[Theorem~1.2]{Hu18} that $H$ is abstractly commensurable to $G(\Gamma)$. In particular, $H$ has a quotient  commensurable with $W(\Gamma)$ because $G(\Gamma)$ does. It is thus important to underline that Theorem~\ref{thmintro:2} applies also  in the case where  $\Gamma$ does contain induced $4$-cycles. 

Combining Theorem~\ref{thmintro:2} with the aforementioned facts on lattice envelopes, we obtain the following (see Theorem~\ref{thm:Main-latt-env}). 

\begin{thmintro}	\label{thmintro:3}			
Let $\Gamma$ be a finite  graph 
satisfying  (R1), (R2) and (R3), and let $H$ be a locally compact group containing $G(\Gamma)$ as a lattice. Then there is a continuous, proper homomorphism $\alpha \colon H \to \mathcal U(\mathbf Y) \rtimes \Aut(\Gamma)$, whose restriction to $G(\Gamma)$ is an isomorphism of $G(\Gamma)$ to $\mathcal U(\mathbf X)$, where the notation is as in Theorem~\ref{thmintro:1}.  
\end{thmintro}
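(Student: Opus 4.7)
The plan is to leverage the three ingredients already lined up: the cocompactness criterion (Proposition~\ref{prop:envelope->coco:RAAG}), the discrete quotient produced by Theorem~\ref{thmintro:2}, and the universal group interpretation of $\mathcal U(\mathbf Y)\rtimes\Aut(\Gamma)$ as the natural home for type-respecting automorphisms of the Davis building $\mathcal B$ with dihedral local actions. The strategy is to promote an \emph{a priori} abstract lattice embedding of $G(\Gamma)$ in $H$ to an action of $H$ on $\mathcal B$, and then to check that this action is actually carried out by elements of $\mathcal U(\mathbf Y)\rtimes\Aut(\Gamma)$.

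First I would apply Proposition~\ref{prop:envelope->coco:RAAG} to reduce immediately to the case where $G(\Gamma)$ is a \emph{cocompact} lattice in $H$; in particular, $H$ is compactly generated and quasi-isometric to $G(\Gamma)$. Since (R1), (R2) and (R3) are in force, Theorem~\ref{thmintro:2} then supplies a continuous homomorphism $\pi\colon H\to W(\Gamma)\rtimes\Aut(\Gamma)$ whose image has finite index. Independently, the quasi-isometric rigidity work of Huang--Kleiner (together with the star-rigidity condition (R3), which is the mechanism that turns QI-information into a genuine automorphism of $\mathcal B$ as opposed to one defined only up to a bounded error) yields a continuous homomorphism $\beta\colon H\to\Aut(\mathcal B)$ that extends the canonical $G(\Gamma)$-action and is proper.

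The core step is to show that $\beta$ lands in $\mathcal U(\mathbf Y)\rtimes\Aut(\Gamma)$. Because $\beta$ restricted to $G(\Gamma)=\mathcal U(\mathbf X)$ has local action $X_s$ at each panel of type $s$, the image $\beta(H)$ normalises $\beta(G(\Gamma))$ up to finite index; combined with the fact that chamber stabilisers in $\beta(H)$ are compact and hence yield profinite local actions in $\Sym(\Omega_s)$, one forces the local action of $\beta(H)$ at a type-$s$ panel to lie in the normaliser of $X_s\cong\ZZ$ in $\Sym(\Omega_s)$, which is precisely $Y_s=D_\infty$. The type-function on $\mathcal B$ is then respected up to an element of $\Aut(\Gamma)$, and the composition of $\beta$ with the type-automorphism projection must match (after a finite-index reduction) the $\Aut(\Gamma)$-factor of $\pi$ from Theorem~\ref{thmintro:2}; consistency with $\pi$ is what pins the image into $\mathcal U(\mathbf Y)\rtimes\Aut(\Gamma)$ rather than in the larger group $\Aut(\mathcal B)\rtimes\Aut(\Gamma)$. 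Defining $\alpha\coloneqq\beta$ then finishes the construction: continuity is automatic because chamber stabilisers are open in $\mathcal U(\mathbf Y)\rtimes\Aut(\Gamma)$, properness follows from the properness of $\beta$ (equivalently, from the compactness of chamber stabilisers combined with the cocompactness of the lattice $G(\Gamma)$), and $\alpha|_{G(\Gamma)}$ is the canonical isomorphism onto $\mathcal U(\mathbf X)$ by construction.

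The main obstacle I expect is the identification step of the previous paragraph: showing that $\beta(H)$ cannot have local actions strictly larger than $Y_s$. The argument has to combine (i)~compactness of $\beta(H)$-chamber stabilisers with (ii)~the specific fact that the only closed subgroup of $\Sym(\Omega_s)$ containing $X_s$ with compact chamber stabiliser is $Y_s$, and (iii)~the matching with the Coxeter quotient of Theorem~\ref{thmintro:2} which pins down the $\Aut(\Gamma)$-twist globally rather than merely locally. A secondary technical point is handling $\Aut(\Gamma)$-orbits of types cleanly, which is likely cleanest by first constructing the restriction of $\alpha$ to the finite-index subgroup $\pi^{-1}(W(\Gamma))$ (where it lands in $\mathcal U(\mathbf Y)$) and only then extending to all of $H$ using the $\Aut(\Gamma)$-factor; \cref{rmk:cyclic-not-dihedral} is presumably the place where one distinguishes the dihedral $Y_s$ from its cyclic index-two subgroup $X_s$, ruling out that $\alpha$ lands in $\mathcal U(\mathbf X)\rtimes\Aut(\Gamma)$ when $H$ is genuinely non-discrete.
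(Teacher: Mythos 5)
Your overall skeleton does match the paper's proof of \cref{thm:Main-latt-env} (reduce to a cocompact envelope via \cref{prop:envelope->coco:RAAG}, use Huang--Kleiner via \cref{thm:HK} and \cref{prop:continuity} to get a continuous action on $\mathscr C(\mathcal B)$, use (R3) via \cref{prop:types} to land in $\Aut(\mathcal B)\rtimes\Aut(\Gamma)$), but the decisive step --- forcing the local action at each $s$-panel into $Y_s\cong D_\infty$ --- is not established, and the two mechanisms you actually argue with do not work. First, the claim that $\beta(H)$ normalises $\beta(G(\Gamma))$ up to finite index is unjustified and false in general: since $\mathcal U(\mathbf X)$ acts regularly on chambers, the chamber stabiliser of its normaliser in $\Aut(\mathcal B)$ embeds into $\Aut(G(\Gamma))$, so that normaliser is countable; hence for a non-discrete envelope such as $H=\mathcal U(\mathbf Y)$ no finite-index subgroup of $\beta(H)$ can normalise $\beta(G(\Gamma))$, and you cannot conclude that local actions lie in $N_{\Sym(\Omega_s)}(X_s)$ this way. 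Second, ``consistency with $\pi$'' from \cref{thmintro:2} cannot pin the image inside $\mathcal U(\mathbf Y)\rtimes\Aut(\Gamma)$: $\pi$ is a continuous homomorphism to a discrete group, so its kernel is open, and an open kernel imposes no constraint at all on local actions at panels. In the paper the Coxeter quotient is a \emph{consequence} of Theorem~\ref{thmintro:3} (via \cref{thm:U(F/F+)}, see \cref{cor:discrete-quot}), not an ingredient of its proof.

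The ingredient that actually does the work is the one you list as (ii) and defer as the ``main obstacle'': it is precisely \cref{lem:env-2-ended}, and it has to be proved, not assumed. Concretely: properness of the $H$-action on the locally finite blown-up graph gives compact open chamber stabilisers, so $\Stab_H(\mathcal P_s)$ is open and contains $\Stab_{G(\Gamma)}(\mathcal P_s)\cong X_s\cong\ZZ$ as a lattice acting regularly on $\mathcal P_s\leftrightarrow\Omega_s$; the structure theory of compactly generated $2$-ended locally compact groups (a maximal compact normal subgroup $W$ with quotient $\ZZ$ or $D_\infty$) shows that the $W$-orbits form an $X_s$-invariant partition of $\Omega_s$ into finite blocks, which must be singletons because such blocks would be cosets of a finite subgroup of $\ZZ$, and this forces the image of the local-action homomorphism on $\Stab_H(\mathcal P_s)$ into the canonical envelope $Y_s\cong D_\infty$. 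Feeding this into \cref{prod:graph-products-are-univ-groups}(ii) gives $\alpha(H)\leq\mathcal U(\mathbf Y)\rtimes\Aut(\Gamma)$. Two smaller points: the identification of $\alpha|_{G(\Gamma)}$ with the regular action of $\mathcal U(\mathbf X)$ on $\mathcal B$ requires the uniqueness part of Huang--Kleiner's Theorem~1.3, which you assert but never address; and \cref{rmk:cyclic-not-dihedral} concerns the conjugating element $t$ of \cref{thm:Main-latt-env} being trivial when the vertex groups are cyclic, not a dichotomy between $\mathcal U(\mathbf X)$ and $\mathcal U(\mathbf Y)$ as targets. As written, your proposal states the key containment rather than proving it.
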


It follows that the Coxeter group quotient appearing in Theorem~\ref{thmintro:1} is not a specific feature of the lattice envelope considered there (see Corollary~\ref{cor:discrete-quot} below):

\begin{corintro}\label{corintro:Coxeter-quot}
Let $\Gamma$ and $H$ be as in Theorem~\ref{thmintro:3}. Then $H$ has an infinite discrete quotient containing a finite index subgroup isomorphic to $W(\Gamma)$. 
\end{corintro}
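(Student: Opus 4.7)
The plan is to compose the homomorphism $\alpha \colon H \to \mathcal U(\mathbf Y) \rtimes \Aut(\Gamma)$ provided by \cref{thmintro:3} with the projection modulo $\mathcal U(\mathbf Y)^+$. By \cref{thmintro:1}, the subgroup $\mathcal U(\mathbf Y)^+$ is the unique smallest non-trivial normal subgroup of $\mathcal U(\mathbf Y)$, hence it is characteristic, hence also normal in the semidirect product $\mathcal U(\mathbf Y) \rtimes \Aut(\Gamma)$; the resulting quotient is canonically isomorphic to $W(\Gamma) \rtimes \Aut(\Gamma)$, which is discrete since $\Aut(\Gamma)$ is finite. Let $\beta \colon H \to W(\Gamma) \rtimes \Aut(\Gamma)$ denote the composition. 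Continuity of $\beta$ and discreteness of its target force $\ker\beta$ to be open in $H$, so $\beta$ exhibits $\beta(H)$ as an honest discrete quotient of $H$.

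To bound the size of this quotient from below, I would argue as follows. Since $\alpha$ is continuous and proper, $\alpha(H)$ is closed in $\mathcal U(\mathbf Y) \rtimes \Aut(\Gamma)$; and it contains $\alpha(G(\Gamma)) = \mathcal U(\mathbf X)$, which is cocompact in $\mathcal U(\mathbf Y) \rtimes \Aut(\Gamma)$ (because $\mathcal U(\mathbf X)$ is cocompact in $\mathcal U(\mathbf Y)$ and $\Aut(\Gamma)$ is finite). Hence any closed subgroup containing it, in particular $\alpha(H)$, is cocompact; projecting, $\beta(H)$ is cocompact in the discrete group $W(\Gamma) \rtimes \Aut(\Gamma)$ and therefore of finite index. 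Moreover, the restriction of $\mathcal U(\mathbf Y) \twoheadrightarrow W(\Gamma)$ to $\mathcal U(\mathbf X) \cong G(\Gamma)$ should coincide with the canonical Artin-to-Coxeter surjection $G(\Gamma) \twoheadrightarrow W(\Gamma)$, because modulo $\mathcal U(\mathbf Y)^+$ each Artin generator $x_s \in X_s$ maps to the non-trivial coset of $X_s$ in $Y_s$, which is identified with the Coxeter involution at~$s$. Granting this identification, $\beta(H) \supseteq \beta(G(\Gamma)) = W(\Gamma)$.

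Combining the two observations, $\beta(H)$ sits between $W(\Gamma)$ and $W(\Gamma) \rtimes \Aut(\Gamma)$ and has finite index in the latter, so it has the shape $W(\Gamma) \rtimes A$ for some $A \leq \Aut(\Gamma)$; it is infinite because $W(\Gamma)$ is, and it contains $W(\Gamma)$ as a subgroup of index~$|A|$, giving the desired discrete quotient. The only step where genuine care is needed is the identification of the restriction of $\mathcal U(\mathbf Y) \twoheadrightarrow W(\Gamma)$ to $\mathcal U(\mathbf X)$ with the canonical surjection $G(\Gamma) \twoheadrightarrow W(\Gamma)$; this should be read off directly from the construction of the isomorphism $\mathcal U(\mathbf Y)/\mathcal U(\mathbf Y)^+ \cong W(\Gamma)$ established earlier in the paper, and is the only place where conventions must be matched explicitly.
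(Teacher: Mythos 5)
Your overall route is exactly the paper's: compose the homomorphism of Theorem~\ref{thmintro:3} (i.e.\ Theorem~\ref{thm:Main-latt-env}) with the quotient of $\mathcal U(\mathbf Y)\rtimes\Aut(\Gamma)$ by $\mathcal U(\mathbf Y)^+$, identify that quotient with $W(\Gamma)\rtimes\Aut(\Gamma)$, and check that the image of $G(\Gamma)\cong\mathcal U(\mathbf X)$ is all of $W(\Gamma)$ via the canonical Artin-to-Coxeter surjection. Your insistence on making this last identification explicit is a good point (it is what produces a finite-index subgroup \emph{isomorphic} to $W(\Gamma)$ rather than merely commensurable with it), modulo a small slip in bookkeeping: the relevant statement is that $x_s$ lies in the non-trivial class of $Y_s/Y_s^+$ (the subgroup $Y_s^+$ consists of the even translations and the point reflections, and the translation $x_s$ falls outside it), not in ``the non-trivial coset of $X_s$ in $Y_s$''.

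The one genuine gap is your appeal to Theorem~\ref{thmintro:1} to conclude that $\mathcal U(\mathbf Y)^+$ is the smallest non-trivial normal subgroup (hence characteristic) and that the quotient is $W(\Gamma)$. Theorem~\ref{thmintro:1} assumes $\Gamma$ irreducible, and the hypotheses (R1), (R2), (R3) of Theorem~\ref{thmintro:3} do not imply irreducibility: for instance the join of two $5$-cycles satisfies all three conditions, and for reducible $\Gamma$ the group $\mathcal U(\mathbf Y)$ splits as a direct product and has no monolith, so the ``smallest normal subgroup'' argument is unavailable. The paper instead quotes Theorem~\ref{thm:U(F/F+)}, which needs no irreducibility: each $Y_s/Y_s^+$ is of order~$2$, whence $\mathcal U(\mathbf Y)/\mathcal U(\mathbf Y)^+\cong\mathcal U(\mathbf Y/\mathbf Y^+)\cong W(\Gamma)$; and normality of $\mathcal U(\mathbf Y)^+$ in the semidirect product follows directly because $\Aut(\Gamma)$ permutes the chamber stabilizers generating it. Relatedly, the discreteness of the quotient is not ``because $\Aut(\Gamma)$ is finite'': what makes $\ker\beta$ open is that $\mathcal U(\mathbf Y)^+$ is open (it contains the compact open chamber stabilizers), so the projection has open kernel and continuity of $\beta$ to the discrete target follows, rather than being assumed. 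Finally, your cocompactness argument for finite index is correct but unnecessary: once $W(\Gamma)\le\beta(H)\le W(\Gamma)\rtimes\Aut(\Gamma)$, the index of $W(\Gamma)$ in $\beta(H)$ is at most $|\Aut(\Gamma)|$.
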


Theorem~\ref{thmintro:2} can also be combined with results by Horbez--Huang on measure equivalence of RAAGs, thereby showing that the Coxeter group quotient from Theorem~\ref{thmintro:2} also occurs for groups that are integrably measure equivalent  to $G(\Gamma)$,  see \cite[Theorem~1]{HH23}. 

We remark that, if $\Gamma$ has at least two vertices but  no edge (so that (R2) fails), then $G(\Gamma)$ is a non-abelian free group. Its lattice envelopes are described by \cite[Theorem~C]{BFS}; the standard lattice envelopes are virtually simple (see \cite{Tits}). This is in sharp contrast with Corollary~\ref{corintro:Coxeter-quot}, which shows that if $\Gamma$ satisfies the conditions of Theorem~\ref{thmintro:3}, then no lattice envelope of $G(\Gamma)$ is virtually simple. 

Our considerations remain valid for the class of all graph products of $2$-ended groups. This class  is strictly larger than the class of RAAGs. Any such graph product over $\Gamma$ is abstractly commensurable to $G(\Gamma)$ (see \cite[Theorem~1]{JS01} or Proposition~\ref{prop:commensurable} below). This allows us to derive the following result (see Corollary~\ref{cor:different-envelopes} below), that may be viewed as a generalization of \cite[Theorem~1.9]{HK18}:

\begin{corintro}\label{corintro:different-envelopes}
Let $\Gamma$ be a finite irreducible graph with at least two vertices satisfying (R1), (R2) and (R3). Then there exist an infinite family  of groups $(G_n)$, that are all commensurable with $G(\Gamma)$, such that for all $m \neq n$, the groups $G_m$ and $G_n$ do not have a common lattice envelope. 
\end{corintro}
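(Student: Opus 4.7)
My plan is to exhibit the family $(G_n)$ as graph products of pairwise non-isomorphic $2$-ended groups over $\Gamma$, and then to rule out a common lattice envelope by combining commensurability with the universal property of $\mathcal U(\mathbf Y) \rtimes \Aut(\Gamma)$ established in Theorem~\ref{thmintro:3}. Concretely, for each $n \geq 1$ I would pick a $2$-ended group $A_n$ from a sequence of pairwise non-isomorphic such groups (for instance $A_n := \ZZ \times \ZZ/n$) and set
\[
G_n := \bigast_{s \in S} A_n ,
\]
the graph product over $\Gamma$ with vertex group $A_n$ at every vertex. Proposition~\ref{prop:commensurable} then guarantees that every $G_n$ is abstractly commensurable with $G(\Gamma)$, so the $G_n$'s are pairwise commensurable, as required.

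Next, suppose toward a contradiction that $G_m$ and $G_n$, with $m \neq n$, share a common lattice envelope $H$. The commensurability of $G_m$ and $G_n$ produces a subgroup $L$ of finite index in both, which is therefore also a lattice in $H$, and via the commensurability isomorphisms $L$ may be identified with a finite index subgroup $L' \leq G(\Gamma)$. Thus $H$ is a lattice envelope of $L'$; by Proposition~\ref{prop:envelope->coco:RAAG} it is cocompact. Running the argument of Theorem~\ref{thmintro:3} with $L'$ in place of $G(\Gamma)$ should then produce a continuous proper homomorphism $\alpha \colon H \to \mathcal U(\mathbf Y) \rtimes \Aut(\Gamma)$ whose image has finite index. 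Since $\alpha$ is proper and the lattices $G_m, G_n \leq H$ are discrete, the kernels $\ker(\alpha) \cap G_m$ and $\ker(\alpha) \cap G_n$ are finite, so after passing to finite index subgroups both $G_m$ and $G_n$ embed as cocompact lattices in $\mathcal U(\mathbf Y) \rtimes \Aut(\Gamma)$.

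It remains to derive a contradiction by extracting from these lattice embeddings an invariant that recovers the isomorphism class of the vertex group $A_n$. The natural candidate is the intersection of $\alpha(G_n)$ with the stabilizer of a panel of type $s$ in $\mathcal U(\mathbf Y)$: in the original graph product action of $G_n$ on its Davis building this intersection is conjugate to $A_n$, and any equivariance between the natural action of $G_n$ on its own Davis building and the action inherited from the embedding into $\mathcal U(\mathbf Y)$---supplied by the Huang--Kleiner QI-rigidity machinery underlying Theorem~\ref{thmintro:3}---should transport this identification to the embedded picture. Since $A_m \ncong A_n$ by construction, this yields the desired contradiction. The main obstacle, and the technical heart of the argument, is precisely this last reconstruction: one needs to show that the isomorphism class of the vertex group of a graph-product lattice inside $\mathcal U(\mathbf Y) \rtimes \Aut(\Gamma)$ is a well-defined invariant of the lattice embedding, a rigidity statement that will likely require combining a unique factorization property for graph products with the structural description of chamber and panel stabilizers developed in Section~\ref{sec:Univ-Groups}.
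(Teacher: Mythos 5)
Your setup (the family $G_n$ as graph products over $\Gamma$ with vertex groups $\mathbf Z\times\mathbf Z/n\mathbf Z$, and commensurability via Proposition~\ref{prop:commensurable}) matches the paper, and you correctly sense that the contradiction must come from panel stabilizers remembering the vertex group. But the route you then take has a genuine gap, and it is not just the final rigidity step you flag as ``the technical heart''. The problem is that your reduction destroys the invariant before you try to read it off: after passing to finite index subgroups of $G_m$ and $G_n$ and landing in $\mathcal U(\mathbf Y)\rtimes\Aut(\Gamma)$ (dihedral local actions, i.e.\ the envelope attached to $G(\Gamma)$), there is nothing left to contradict. All the groups $\mathbf Z\times\mathbf Z/n\mathbf Z$ are commensurable with $\mathbf Z$, so finite index subgroups of \emph{every} $G_n$ genuinely do embed as cocompact lattices in $\mathcal U(\mathbf Y)\rtimes\Aut(\Gamma)$ (e.g.\ via the common subgroup $\mathcal U(\mathbf T)$ from the proof of Proposition~\ref{prop:commensurable}). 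A finite index subgroup of $G_n$ no longer acts regularly on chambers, its intersections with panel stabilizers are only commensurable with $A_n$, and commensurability of $\mathbf Z\times\mathbf Z/n\mathbf Z$ does not see $n$; so the ``vertex group of the lattice embedding'' is not recoverable along this route, and in fact no rigidity statement of the kind you hope for can be true at the level of finite index subgroups. (A smaller slip: abstract commensurability of $G_m$ and $G_n$ does not produce a single subgroup of $H$ of finite index in both; this only matters for how you invoke Proposition~\ref{prop:envelope->coco:RAAG}, and is repairable.)

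The paper avoids this by never passing to finite index and by using the envelope adapted to $G_m$ rather than to $G(\Gamma)$: Theorem~\ref{thm:Main-latt-env} is applied directly to the lattice embedding $G_m\leq H$, so the resulting proper homomorphism $\alpha$ has image (up to conjugacy) squeezed between the regular copy of $G_m$ and $\mathcal U^{\lambda}_{\mathcal B}(\mathbf L)\rtimes\Aut(\Gamma)$, where each $L_s$ is the \emph{canonical envelope} of $\mathbf Z\times\mathbf Z/m\mathbf Z$, i.e.\ $\Aut(\mathcal G_m)$ --- a constraint that depends on $m$. Then the uniqueness statement in \cite[Theorem~1.3]{HK18} forces the restriction of $\alpha$ to the other lattice $G_n$ to be conjugate to its standard regular action on $\mathcal B$, so a panel stabilizer in $G_n$ is isomorphic to $\mathbf Z\times\mathbf Z/n\mathbf Z$ and acts \emph{regularly} on the panel, inside a group whose local action lies in $\Aut(\mathcal G_m)$. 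Lemma~\ref{lem:lattices-in-Aut(Gm)} then says that any group acting regularly on the vertex set of $\mathcal G_m$ has largest finite normal subgroup of order $m$ (or $m/2$ with infinite dihedral quotient); since $\mathbf Z\times\mathbf Z/n\mathbf Z$ has largest finite normal subgroup of order $n$ and cyclic quotient, $m=n$. In short, the missing ingredients in your sketch are exactly: apply Theorem~\ref{thm:Main-latt-env} to $G_m$ itself (not to a finite index subgroup, and not with target $\mathcal U(\mathbf Y)$), use the Huang--Kleiner uniqueness to standardize the $G_n$-action, and use Lemma~\ref{lem:lattices-in-Aut(Gm)} to convert regularity on a panel into the numerical identity $m=n$.
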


Related results applying to graphs $\Gamma$ that are reducible have recently been established by Horbez--Huang, see \cite[Theorems~4 and 11.8]{HH23}.

A very natural lattice envelope of $G(\Gamma)$ is the full automorphism group of its Cayley graph $X(\Gamma)^{(1)}$ with respect to the standard generating set. The choice of notation is motivated by the fact that the Cayley graph in question coincides with the $1$-skeleton of the (special) CAT(0) cube complex $X(\Gamma)$ defined as the universal cover of the Salvetti complex associated with $\Gamma$. It turns out that the universal group
$\mathcal U(\mathbf Y)$ from Theorem~\ref{thmintro:1} naturally acts on $X(\Gamma)^{(1)}$ by automorphisms. Using that observation, we shall derive  the following. 

\begin{corintro}\label{corintro:Salvetti}
Let $\Gamma$ be a finite graph and let $H = \Aut(X(\Gamma)^{(1)})$ be endowed with the compact-open topology. 
\begin{enumerate}
\item $H$ is discrete if and only if $\Gamma$ is the complete graph. 
\end{enumerate}
Suppose in addition that $\Gamma$ is irreducible, 
and satisfies  (R1), (R2) and (R3). Then the following assertions hold. 
\begin{enumerate}[resume]
\item  $H \cong \mathcal U(\mathbf Y) \rtimes \Aut(\Gamma)$.
\item $H$ is almost simple, its smallest normal subgroup is the subgroup $H^+$ generated by the vertex stabilizers and preserving the natural edge-colouring of $X(\Gamma)^{(1)}$ by the elements of $S$, and the quotient $H/H^+$ is isomorphic to $W(\Gamma) \rtimes \Aut(\Gamma)$. 
\item Every lattice envelope $L$ of $G(\Gamma)$ has a compact normal subgroup $K$ such that $L/K$ is isomorphic to a closed subgroup of $H$ containing $G(\Gamma)$.
\end{enumerate}

\end{corintro}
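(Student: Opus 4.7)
The plan is to prove the four parts in order, using Theorems~\ref{thmintro:1} and~\ref{thmintro:3} as the main inputs. For~(1), if $\Gamma$ is complete then $G(\Gamma) \cong \ZZ^S$ and $X(\Gamma)^{(1)}$ is a Euclidean grid with finite vertex stabiliser (the signed permutations of the standard generators), so $H$ is discrete. Conversely, if $\Gamma$ is not complete then $\mathcal{U}(\mathbf{Y})$ is non-discrete by the discussion preceding Theorem~\ref{thmintro:1}, and will be shown in~(2) to embed as a closed subgroup of $H$; hence $H$ is non-discrete.

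For~(2), I would construct mutually inverse continuous isomorphisms between $\mathcal{U}(\mathbf{Y}) \rtimes \Aut(\Gamma)$ and $H$. In one direction, the natural action of $\mathcal{U}(\mathbf{Y})$ on the Davis building $\mathcal{B}$ restricts to a faithful colour-preserving action on $X(\Gamma)^{(1)}$: each $s$-panel of $\mathcal{B}$ is exactly a connected component of the $s$-coloured subgraph (an infinite line), and the prescribed local group $Y_s \cong D_\infty$ is the full automorphism group of that line; combined with the canonical action of $\Aut(\Gamma)$ permuting standard generators, this yields a continuous injection $\Phi \colon \mathcal{U}(\mathbf{Y}) \rtimes \Aut(\Gamma) \hookrightarrow H$. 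For surjectivity, $X(\Gamma)^{(1)}$ is the 1-skeleton of the CAT(0) cube complex $X(\Gamma)$, whose cube structure is recoverable from the 1-skeleton alone via the flag condition on mutually orthogonal edges; any $h \in H$ therefore extends to a cube-complex automorphism and permutes hyperplanes, and since $X(\Gamma)$ is a special cube complex the hyperplanes are labelled by $S$. This yields a colour permutation $\sigma(h) \in \Sym(S)$ that preserves the ``hyperplanes-cross'' relation on $S$, hence lies in $\Aut(\Gamma)$. Post-composing $h$ with $\sigma(h)\inv$ then gives a colour-preserving automorphism which, by the local characterisation above, belongs to $\mathcal{U}(\mathbf{Y})$. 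Compatibility of the compact-open and permutation topologies makes $\Phi$ a topological isomorphism.

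For~(3), Theorem~\ref{thmintro:1} shows that $\mathcal{U}(\mathbf{Y})^+$ is characteristic in $\mathcal{U}(\mathbf{Y})$, hence normal in $H$, with $H/\mathcal{U}(\mathbf{Y})^+ \cong W(\Gamma) \rtimes \Aut(\Gamma)$; it also identifies $\mathcal{U}(\mathbf{Y})^+$ with the subgroup of $\mathcal{U}(\mathbf{Y})$ generated by vertex stabilisers, yielding the description of $H^+$. Almost simplicity then reduces to a routine analysis of closed normal subgroups $N \trianglelefteq H$ in the semidirect product: if $N \cap \mathcal{U}(\mathbf{Y}) \neq 1$ then Theorem~\ref{thmintro:1} gives $\mathcal{U}(\mathbf{Y})^+ \subseteq N$; otherwise $N$ injects into the finite group $\Aut(\Gamma)$ and is therefore centralised by a finite-index subgroup of the topologically simple $\mathcal{U}(\mathbf{Y})^+$, which forces $N \subseteq C_H(\mathcal{U}(\mathbf{Y})^+)$, and this centraliser is trivial by a standard universal-group argument exploiting the regular action of $G(\Gamma)$ and the irreducibility of $\Gamma$. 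Part~(4) is then immediate from Theorem~\ref{thmintro:3}: the continuous proper homomorphism $\alpha \colon L \to \mathcal{U}(\mathbf{Y}) \rtimes \Aut(\Gamma) \cong H$ has compact kernel $K$ and closed image containing $G(\Gamma) \cong \mathcal{U}(\mathbf{X})$.

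The main obstacle, in my view, is part~(2): recovering the $S$-edge-colouring of $X(\Gamma)^{(1)}$ from its bare graph structure up to $\Aut(\Gamma)$. This is natural from the CAT(0) cube complex viewpoint---cubes are flag-complex data on the 1-skeleton and hyperplanes carry the colouring---but the combinatorial verification is the technical heart of the argument. Everything else then follows from Theorems~\ref{thmintro:1} and~\ref{thmintro:3} together with the universal-group formalism of Section~\ref{sec:Univ-Groups}.
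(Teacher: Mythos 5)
Your embedding direction for (2) is essentially the paper's observation that the legal colouring $\lambda$ is an $\mathcal U(\mathbf Y)$-equivariant $\mathbf Z$-blow-up data, so that $\mathcal U(\mathbf Y)\rtimes\Aut(\Gamma)$ acts on $|\mathcal B|_{\mathscr H}\cong X(\Gamma)^{(1)}$; and your parts (1), (3), (4) are in line with the paper's use of Theorems~\ref{thmintro:1} and~\ref{thmintro:3}. The problem is the surjectivity step of (2), which you yourself call the technical heart and then do not carry out: this is a genuine gap, not a routine verification. You argue that any $h\in H$ extends to $X(\Gamma)$ and permutes hyperplanes, and that ``since the hyperplanes are labelled by $S$'' one obtains a colour permutation $\sigma(h)\in\Sym(S)$. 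But an automorphism permuting hyperplanes gives no a priori reason why hyperplanes with the same label go to hyperplanes with the same label, let alone via one globally consistent permutation of $S$; for instance when $\Gamma$ has no edges, $X(\Gamma)^{(1)}$ is a regular tree and graph automorphisms scramble the edge-colours with no global $\sigma(h)$ at all. So the existence of $\sigma(h)\in\Aut(\Gamma)$ must be extracted from (R1), (R2), (R3), and your outline never indicates how these hypotheses enter. Note also that the clique-based local argument of Proposition~\ref{prop:types}, which settles exactly this type-consistency issue for the chamber graph $\mathscr C(\mathcal B)$ under (R3), has no analogue here: in the blown-up graph $X(\Gamma)^{(1)}$ the panels have become lines, the graph is triangle-free, and a single hyperplane is dual to only part of a colour class of edges, so neither cliques nor individual hyperplanes detect the colouring.

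The paper avoids this reconstruction problem entirely: since $G(\Gamma)$ acts regularly on the vertices of the connected, locally finite graph $X(\Gamma)^{(1)}$, it is a cocompact lattice in the totally disconnected group $H$, hence $H$ is quasi-isometric to $G(\Gamma)$, and Theorem~\ref{thm:Main-latt-env} (which rests on the Huang--Kleiner rigidity Theorem~\ref{thm:HK}, continuity via Proposition~\ref{prop:continuity}, and Proposition~\ref{prop:types} for the chamber graph) gives $H\leq \mathcal U(\mathbf Y)\rtimes\Aut(\Gamma)$; together with the reverse inclusion (your $\Phi$) this yields (ii), and then (iii), (iv) follow as you indicate. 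In other words, the statement you treat as a combinatorial exercise --- that every automorphism of $X(\Gamma)^{(1)}$ preserves the family of monochromatic lines up to a global element of $\Aut(\Gamma)$ --- is precisely where the deep quasi-isometric rigidity input is used, and without either invoking that machinery (as the paper does) or supplying an independent proof under (R1)--(R3), your argument for (2) is incomplete.
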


The first item in Corollary~\ref{corintro:Salvetti} recovers a result established by T.~Taylor \cite{Tay17}, that has also been recovered recently by Hartnick--Incerti-Medici in \cite[Theorem~E]{HIM}. We refer to \cite[Corollary~3.18]{MSSW23} for a closely related result. We emphasize that the assertions (ii), (iii), (iv) do not hold for arbitrary graphs $\Gamma$. Indeed, consider for example the graph $\Gamma$ with two vertices and no edge; property (R2) does not hold for this graph. The group $G(\Gamma)$ is a free group of rank~$2$, while $W(\Gamma)$ is infinite dihedral. In that case, the graph $X(\Gamma)^{(1)}$ is the regular tree of degree~$4$. Hence, the group $ \Aut(X(\Gamma)^{(1)})$ has a simple subgroup of index~$2$ (see \cite{Tits}), hence it does not have any non-trivial action on the Cayley  graph of $W(\Gamma)$, which is the simplicial line. 

\subsection*{Acknowledgements}
We thank Thomas Weigel for asking  a question that gave us the impetus to undertake this project. It was started while we were visiting the University of Münster in September 2023 on the occasion of the Focus Programme
entitled \textit{Actions of totally disconnected locally compact groups on discrete structures}. We thank that institution for its hospitality and support, and the organizers of that Programme for their invitation. We are also grateful to Jingyin Huang,  Merlin Incerti-Medici and the referee for their comments on a first draft of this paper. 

Both authors are supported in part by the FWO and the F.R.S.-FNRS under the EOS programme (project ID 40007542). 

\section{Automorphisms of the chamber-graph of a building}\label{sec:chamber-graph}

Let $(W, S)$ be a Coxeter system. The \textbf{Davis diagram} of $(W, S)$ is the edge-labeled graph $\Gamma = \Gamma(W, S)$ with vertex set $S$, where two distinct vertices $s, t \in S$ form an edge  if the Coxeter number $m_{st} = o(st)$ is finite; in that case we set $n = m_{st}$. In case $(W, S)$ is right-angled, we omit the labels and $\Gamma$ is then a simple graph. The usual Coxeter diagram of $(W, S)$ is denoted by $\mathrm{Diag}(W, S)$. Notice that both diagrams carry the exact same information. In particular, we have $\Aut(\mathrm{Diag}(W, S)) = \Aut(\Gamma(W, S))$. We identify that group with the group of \textbf{diagram automorphisms} of $W$, i.e. the subgroup of $\Aut(W)$ that stabilizes the Coxeter generating set $S$.

Let now $\mathcal B$ be a building of type $(W, S)$. We mainly view $\mathcal B$ as a chamber system: the elements of $\mathcal B$ are chambers, and for each $s \in S$, the set $\mathcal B$ is equipped with an equivalence relation denoted $\sim_s$ and called \textbf{$s$-adjacency}.
A \textbf{gallery} is a finite sequence $c_0 \sim_{s_1} c_1 \sim_{s_2} \cdots \sim_{s_n} c_n$ of adjacent chambers. It is called a \textbf{minimal gallery} if there is no shorter gallery from $c_0$ to $c_n$.
Moreover, $\mathcal B$ is endowed with a \textbf{Weyl distance} function $\delta \colon \mathcal B\times \mathcal B \to W$ with the property that if 
$$c_0 \sim_{s_1} c_1 \sim_{s_2} \cdots \sim_{s_n} c_n$$
 is a gallery joining the chamber  $c_0$ to $c_n$ in $\mathcal B$, then there is a subword $(s_{i_1}, \dots, s_{i_m})$ of $(s_1, \dots, s_n)$ such that  $\delta(c_0, c_n) = s_{i_1} \dotsm s_{i_m} \in W$,  see \cite[Lemma~5.16]{AB08} (by a \textbf{subword}, we mean that the sequence  $(s_{i_1}, \dots, s_{i_m})$ is obtained from $(s_1, \dots, s_n)$ by deleting a certain number of terms). An \textbf{automorphism} of the building $\mathcal B$ is a permutation of the chambers that preserves the Weyl distance. We denote by $\Aut(\mathcal B)$ the group formed by those automorphisms. They are sometimes called the \textbf{type-preserving automorphisms}. The group $\Aut(\mathcal B)$ has a natural extension formed by the so-called \textbf{type-permuting automorphisms}, defined as those permutations $g \in \Sym(\mathcal B)$ for which there is a diagram automorphism $\pi(g)  \in \Aut(\Gamma(W, S))$ such that for all $x, y \in \mathcal B$, we have
$$\delta(g.x, g.y) = \pi(g).\delta(x, y).$$
We denote by $\widetilde{\Aut(\mathcal B)}$ the group of all type-permuting automorphisms. 
Observe that the map $\pi \colon  \widetilde{\Aut(\mathcal B)} \to \Aut(\Gamma(W, S))$ is a homomorphism whose kernel is $\Aut(\mathcal B)$.

It is well known (see \cite[Proposition~A.14]{AB08}) that the full automorphism group of the standard simplicial realization of a building $\mathcal B$ coincides with $\widetilde{\Aut(\mathcal B)}$. This is no longer the case for the Davis realization of a building, as is easily seen by considering the Coxeter group of rank~$3$ defined as the free product of $3$ copies of the cyclic group of order~$2$. Nonetheless, in some special cases, there is enough rigidity to ensure that the automorphism group of   a building coincides with the automorphism group of its Davis realization.

The \textbf{chamber-graph} $\mathscr C(\mathcal B)$ of a building $\mathcal B$ is defined as the graph whose vertex set is the set of chambers, where two vertices form an edge if the corresponding chambers are adjacent. In the special case of thin buildings, the following result recovers the easier part of  \cite[Theorem~5.12]{HagPau_simpl} and \cite[Corollary~B]{BF23}.

\begin{prop}\label{prop:types}
Let $(W, S)$ be a Coxeter system. 
Suppose that  for each $s \in S$, the only automorphism of $\Gamma(W, S)$ fixing pointwise the $1$-ball around $s$ is the trivial one.

Then the natural injective homomorphism
$\widetilde{\Aut(\mathcal B)} \to \Aut(\mathscr C(\mathcal B))$
is surjective. Hence  we have $\widetilde{\Aut(\mathcal B)} \cong \Aut(\mathscr C(\mathcal B))$.
%
\end{prop}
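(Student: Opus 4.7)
I start by extracting, for every chamber $c \in \mathcal{B}$, a local permutation $\sigma_c \in \Sym(S)$. The neighbours of $c$ in $\mathscr{C}(\mathcal{B})$ partition into the $|S|$ cliques $\{c' : c \sim_s c', c' \neq c\}$ for $s \in S$; since chambers in two distinct panels at $c$ are never adjacent, these cliques are exactly the connected components of the induced subgraph on the neighbours of $c$. Hence $g$ permutes them, giving a unique bijection $\sigma_c \colon S \to S$ such that $g$ sends the $s$-panel at $c$ onto the $\sigma_c(s)$-panel at $g.c$. Because the type of an edge is symmetric in its endpoints, $\sigma_c(s) = \sigma_{c'}(s)$ whenever $c \sim_s c'$. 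The goal is to show that each $\sigma_c$ is a diagram automorphism and that $\sigma_c$ does not depend on $c$; with the common value $\pi$, an easy induction on gallery length then yields $\delta(g.c, g.c') = \pi(\delta(c, c'))$ for all $c, c'$, placing $g \in \widetilde{\Aut(\mathcal{B})}$ with projection $\pi$.

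The crux is the rank-$2$ claim: for each $s, t \in S$ with $m_{st} < \infty$ and each chamber $c$, the image under $g$ of the $\{s, t\}$-residue $R$ through $c$ is the rank-$2$ residue $R^\ast$ at $g.c$ of type $\{\sigma_c(s), \sigma_c(t)\}$. Since $R$ is convex in the gallery metric, $g(R)$ is a graph-convex subgraph of $\mathscr{C}(\mathcal{B})$ containing the two panels at $g.c$ of types $\sigma_c(s)$ and $\sigma_c(t)$; a local analysis matches it with $R^\ast$, directly via the grid structure $K_{q_s+1} \square K_{q_t+1}$ when $m_{st} = 2$ and via the apartment structure of rank-$2$ buildings when $m_{st} \geq 3$. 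Granting this, one reads off the panel types of $R^\ast$ at $g.c$ to conclude $m_{\sigma_c(s), \sigma_c(t)} = m_{st}$, so $\sigma_c \in \Aut(\Gamma(W, S))$. For $c \sim_u c'$ and any $v \in u^\perp$, applying the claim both at $c$ and at $c'$ gives $g(R_{u, v}(c)) = R_{\sigma_c(u), \sigma_c(v)}(g.c)$ and $g(R_{u, v}(c')) = R_{\sigma_{c'}(u), \sigma_{c'}(v)}(g.c')$, but $R_{u, v}(c) = R_{u, v}(c')$, so the single residue has type both $\{\sigma_c(u), \sigma_c(v)\}$ and $\{\sigma_{c'}(u), \sigma_{c'}(v)\}$; hence these two sets coincide, and combined with $\sigma_{c'}(u) = \sigma_c(u)$ this forces $\sigma_{c'}(v) = \sigma_c(v)$. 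Thus $\sigma_c$ and $\sigma_{c'}$ agree on the $1$-ball $\{u\} \cup u^\perp$ of $u$ in $\Gamma(W, S)$, so $\sigma_c \sigma_{c'}^{-1} \in \Aut(\Gamma(W, S))$ fixes pointwise the $1$-ball around $\sigma_c(u)$ and is trivial by the standing hypothesis. Connectedness of $\mathscr{C}(\mathcal{B})$ then forces $\sigma_c \equiv \pi$ on all chambers.

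The principal obstacle is the rank-$2$ claim itself: one must verify that the graph-convex subgraph $g(R) \subseteq \mathscr{C}(\mathcal{B})$, which is isomorphic as a chamber graph to $R$ and contains the correct pair of intersecting panels at $g.c$, actually coincides with the candidate residue $R^\ast$ of $\mathcal{B}$ rather than with some other graph-convex set sharing that local structure. This rests on the rigidity of rank-$2$ residues inside the chamber graph of a building, in the spirit of the thin-case results of \cite[Theorem~5.12]{HagPau_simpl} and \cite[Corollary~B]{BF23}.
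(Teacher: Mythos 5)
Your overall architecture coincides with the paper's: you identify the panels inside $\mathscr C(\mathcal B)$ purely graph-theoretically (your description of the neighbourhood of a chamber splitting into cliques, one per panel, is equivalent to the paper's observation in Lemma~\ref{lem:chamber-graph-1} that panels are the maximal cliques), you extract a local type-permutation $\sigma_c$ at each chamber, you show that for adjacent chambers $c\sim_u c'$ the permutations $\sigma_c$ and $\sigma_{c'}$ agree on the $1$-ball around $u$ by tracking images of rank-$2$ spherical residues, you invoke the standing hypothesis to force $\sigma_c=\sigma_{c'}$, and you conclude by connectedness that $g$ is type-permuting. All of that is sound and is exactly how the paper argues.

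The genuine gap is the step you yourself label ``the principal obstacle'': you never prove that $g$ maps the $\{s,t\}$-residue $R$ at $c$ onto the $\{\sigma_c(s),\sigma_c(t)\}$-residue at $g.c$, nor even that $m_{\sigma_c(s)\sigma_c(t)}=m_{st}$. Appealing to rigidity ``in the spirit of'' \cite{HagPau_simpl} and \cite{BF23} does not close it: those results concern thin buildings (Cayley graphs of Coxeter groups), whereas here $\mathcal B$ is an arbitrary, typically thick, building, and Proposition~\ref{prop:types} is precisely the thick-case statement one is trying to prove. Moreover, your suggested mechanism --- $g(R)$ is graph-convex and contains the two panels of types $\sigma_c(s)$ and $\sigma_c(t)$ at $g.c$, hence must be the residue $R^\ast$ --- is circular as stated: the types of the edges of $g(R)$ away from $g.c$ are governed by the local permutations $\sigma_{c'}$ at the other chambers $c'\in R$, whose agreement with $\sigma_c$ is exactly what is being proved, so convexity together with the two panels at the single chamber $g.c$ does not by itself identify $g(R)$ with any residue; the announced ``local analysis'' via the grid $K_{q_s+1}\square K_{q_t+1}$ (for $m_{st}=2$) and the apartment structure (for $m_{st}\geq 3$) is precisely the missing content and is not carried out. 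The paper fills this hole with Lemmas~\ref{lem:chamber-graph-2} and~\ref{lem:chamber-graph-3}: a path $x\sim_s y\sim_t z$ lies on a minimal circuit (an induced circuit whose convex hull has girth equal to its length) if and only if $m_{st}<\infty$, in which case the circuit has length $2m_{st}$ and is contained in the $\{s,t\}$-residue, and that residue is recovered as the union of all such circuits through the relevant chambers. Since these notions are intrinsic to the graph $\mathscr C(\mathcal B)$, any graph automorphism preserves both the Coxeter numbers and the rank-$2$ spherical residues, which is what your argument needs. Some argument of this kind (or a fully executed version of your case analysis, valid for thick panels) must be supplied for the proof to be complete.
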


We will use the following subsidiary facts. 

\begin{lem}\label{lem:chamber-graph-1}
The panels in $\mathcal B$ are the maximal cliques in $\mathscr C(\mathcal B)$. 
\end{lem}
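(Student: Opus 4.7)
The plan is to establish the lemma in both directions: every panel of $\mathcal B$ is a maximal clique in $\mathscr C(\mathcal B)$, and conversely every maximal clique is a panel. The essential tool throughout is the Weyl distance $\delta$, together with the fact that a non-stuttering gallery whose type word is reduced in $W$ realizes $\delta$ (see \cite[Prop.~5.23]{AB08}). The key idea for the maximality statement is that two distinct $s$- and $t$-panels can overlap very little, so a chamber outside an $s$-panel cannot simultaneously share a neighbour of type $\neq s$ with two of its chambers.

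For the first direction, I would first observe that any $s$-panel $\Pi$ is a clique: by definition of $\sim_s$, any two distinct chambers of $\Pi$ form an edge of $\mathscr C(\mathcal B)$. To prove maximality, suppose $d \in \mathcal B$ is adjacent to every chamber of $\Pi$, and assume for contradiction that $d \notin \Pi$. Picking two distinct chambers $c, c' \in \Pi$, one has $d \sim_t c$ and $d \sim_{t'} c'$ for some $t, t' \in S$, with $d \neq c, c'$, and necessarily $t, t' \neq s$ (otherwise $d$ would be $s$-equivalent to $c$ or $c'$, hence lie in $\Pi$). Then the gallery $d \sim_t c \sim_s c'$ is non-stuttering and has type $(t, s)$, which is a reduced word of length $2$ in $W$ because $t \neq s$; hence $\delta(d, c') = ts$. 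On the other hand, the direct $t'$-adjacency gives $\delta(d, c') = t'$, contradicting $\ell(ts) = 2 \neq 1 = \ell(t')$.

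For the converse, given a maximal clique $K$ of $\mathscr C(\mathcal B)$, I note that $|K| \geq 2$: since panels have at least two chambers, every chamber has a neighbour in $\mathscr C(\mathcal B)$, so singletons are not maximal. Pick distinct $c, c' \in K$; they are $s$-adjacent for some $s \in S$, and the argument of the previous paragraph shows that any other $d \in K$ must lie in the $s$-panel $\Pi$ through $c$ and $c'$. Hence $K \subseteq \Pi$, and since $\Pi$ is itself a clique, maximality of $K$ forces $K = \Pi$. No serious obstacle is expected: the whole proof reduces to a short Weyl-distance computation inside a rank-$2$ residue, and no Coxeter-theoretic input beyond $\ell(ts) = 2$ for $s \neq t$ is used.
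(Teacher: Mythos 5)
Your proof is correct and follows essentially the same route as the paper: the key point in both is that a triangle with adjacencies of two distinct types $t \neq s$ would yield a gallery of reduced type $(t,s)$, forcing $\delta = ts$ of length $2$ and contradicting adjacency, so every triangle (hence every clique) lies in a single panel. Your write-up merely spells out the two inclusions that the paper's one-line argument leaves implicit.
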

\begin{proof}
If three pairwise distinct vertices $x, y, z$ in $\mathscr C(\mathcal B)$ are mutually adjacent, then the gallery $(x, y, z)$ joining $x$ to $z$ via $y$ is not minimal. It follows that $x, y, z$ belong to the same panel.
\end{proof}

A subgraph of a graph is called  \textbf{convex} if every minimal path joining two vertices in that subgraph is entirely contained in the subgraph. A \textbf{minimal circuit} is an induced subgraph isomorphic to a circuit of length $k$, and whose convex hull is of girth $k$.

\begin{lem}\label{lem:chamber-graph-2}
Let $x, y, z \in \mathcal B$ be chambers and $s, t \in S$ be distinct elements such that $x \sim_s y \sim_t z$. 

Then $m_{st} < \infty$ if and only if the path $(x, y, z)$ is contained in a minimal circuit of length $2m_{st}$ in $\mathscr C(\mathcal B)$. 
\end{lem}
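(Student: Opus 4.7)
The strategy is to work inside the rank-$2$ $\{s,t\}$-residue $R$ of $\mathcal{B}$ containing $x, y, z$, which exists because $y \sim_s x$ and $y \sim_t z$. By standard building theory, $R$ is itself a rank-$2$ building of Coxeter type $\langle s, t \mid s^2, t^2, (st)^{m_{st}}\rangle$, and its apartments are thin subcomplexes, namely $2m_{st}$-cycles when $m_{st}$ is finite and bi-infinite lines otherwise.

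For the ``$\Rightarrow$'' direction, assume $m_{st} = n < \infty$. I would take an apartment $A \subseteq R$ containing $x, y, z$; such an apartment exists because $(x, y, z)$ is a minimal gallery (the Weyl distance $\delta(x, z) = st$ has reduced length $2$ in $W$). Then $A$ is a $2n$-cycle, and I claim it is a minimal circuit in $\mathscr{C}(\mathcal{B})$ through $(x, y, z)$. Induced-ness: any two $c, c' \in A$ at cycle-distance $d$ with $2 \leq d \leq n$ have $\delta(c, c')$ equal to a reduced word of length $d$ in $\{s, t\}$, hence $\delta(c, c') \notin S$ and $c \not\sim c'$. Convex hull: by convexity of residues, every minimal gallery between chambers of $R$ lies in $R$; and in the rank-$2$ spherical building $R$, the uniqueness of projections onto panels (and, for opposite chambers, uniqueness of the apartment containing them) forces all such minimal galleries to remain inside the apartment $A$. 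Thus the convex hull of $A$ is $A$ itself, which has girth $2n$.

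For the ``$\Leftarrow$'' direction, suppose the path lies in a minimal circuit $C$ of length $L$, read as a closed gallery $(x = c_0, y = c_1, z = c_2, c_3, \ldots, c_L = c_0)$ whose adjacency types spell a word $w = s t s_3 \cdots s_L$ representing $1 \in W$. Combining the induced-subgraph and convex-hull-girth conditions on $C$ with convexity of residues, I would argue that the circuit must remain inside $R$: any excursion outside $R$ would, after projecting back to $R$, yield a strictly shorter closed gallery inside the convex hull, contradicting minimality. Hence $w \in \{s, t\}^*$ and $w = 1$ in the dihedral group $\langle s, t \mid s^2, t^2, (st)^{m_{st}}\rangle$. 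If $m_{st} = \infty$, the only way such a word can equal $1$ is via the relations $s^2 = 1$ or $t^2 = 1$, which in the gallery correspond to entering and leaving a single panel; a short case analysis shows that any such closed gallery through $(x, y, z)$ would force two distinct panels of opposite type to share two chambers, contradicting the building axioms. Hence $m_{st} < \infty$, and $L \geq 2 m_{st}$; combined with the ``$\Rightarrow$'' direction we conclude $L = 2 m_{st}$.

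The main obstacle is verifying the convex-hull-girth condition. Since panels of size $\geq 3$ already create triangles in $\mathscr{C}(\mathcal{B})$, so the chamber graph typically has girth $3$, one must carefully confirm both that the apartment $A$ has convex hull of girth exactly $2n$, and dually that a minimal circuit through $(x, y, z)$ cannot escape the $\{s, t\}$-residue; both statements rely delicately on convexity of residues and on the rigidity of projections in rank-$2$ spherical buildings.
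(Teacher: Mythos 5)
Your ``only if'' direction is correct and is essentially the paper's argument with the details filled in: the apartment of the $\{s,t\}$-residue through $x,y,z$ is an induced $2m_{st}$-cycle, and since galleries of a given reduced type between two chambers are unique (uniqueness of projections to panels), that apartment is convex, hence equal to its own convex hull, which therefore has girth $2m_{st}$. The paper compresses this to ``clear, since residues are convex,'' so your elaboration is welcome.

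The converse, however, is where the real content lies, and there your argument has a genuine gap at its crux, namely the claim that a minimal circuit $C$ through $(x,y,z)$ cannot leave the $\{s,t\}$-residue $\mathcal R$. Projecting $C$ into $\mathcal R$ does produce a strictly shorter closed walk (edges crossing out of $\mathcal R$ collapse, since a chamber outside $\mathcal R$ adjacent to $c\in\mathcal R$ has gate $c$), and its vertices do lie in the convex hull of $C$ (each $\proj_{\mathcal R}(c)$ lies on a minimal gallery from $c$ to $y$). But a shorter closed walk in the convex hull does not contradict the condition ``girth of the convex hull equals $|C|$'' unless that walk actually contains a circuit: it could be degenerate, with edge-support a tree (for instance a path through $x,y,z$ traversed back and forth), and nothing in your sketch excludes this. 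The paper avoids the problem by a different mechanism: it takes the chamber $y'$ of the circuit antipodal to $y$, uses girth-minimality to see that the two arcs are two distinct \emph{minimal} galleries from $y$ to $y'$, one starting with an $s$-step and one with a $t$-step; standard Coxeter combinatorics then forces $m_{st}<\infty$ and $\delta(y,\proj_{\mathcal R}(y'))$ to be the longest element of $W_{\{s,t\}}$, and the gate property $\dist(y,y')=\dist\bigl(y,\proj_{\mathcal R}(y')\bigr)+\dist\bigl(\proj_{\mathcal R}(y'),y'\bigr)$ then pins $y'$, hence the whole circuit, inside the residue. Two further points: your conclusion ``$L\ge 2m_{st}$; combined with the forward direction we conclude $L=2m_{st}$'' is a non sequitur, since the existence of one minimal circuit of length $2m_{st}$ through the path does not bound the length of another (to exclude $L>2m_{st}$ use that $\mathcal R$ has gallery diameter $m_{st}$, so antipodal chambers of a longer circuit inside $\mathcal R$ are joined by a geodesic shorter than $L/2$, violating the girth condition); and your $m_{st}=\infty$ endgame is imprecise --- the clean contradiction is that a repeated letter in the type word puts three consecutive chambers of $C$ in a single panel, producing a chord and contradicting that $C$ is induced.
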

\begin{proof}
The `only if' part is clear, since residues in buildings are convex (see \cite[Example 5.44(b)]{AB08}). For the converse, let $y'$ be the chamber belonging to the given circuit, at distance $m_{st}$ from $y$. Then that circuit gives rise to two distinct minimal paths joining $y$ to $y'$, one passing through $x$ and the other through $z$.
Now let $\mathcal R$ be the rank~$2$ residue containing $x,y,z$, which is of type $\{ s,t \}$, and consider the projection $\proj_\mathcal R$ onto that residue (see \cite[Definition 5.35]{AB08}).
The projection of the minimal path $(y, x, \dots, y')$ gives a (possibly stammering) path $(y, x, \dots, \proj_\mathcal R(y'))$ in $\mathcal R$, while
the projection of the minimal path $(y, z, \dots, y')$ gives a (possibly stammering) path $(y, z, \dots, \proj_\mathcal R(y'))$ in $\mathcal R$.
Joining these two paths results in a circuit containing $x, y, z$ and $\proj_\mathcal R(y')$ in $\mathcal R$ of length $\leq 2 m_{st}$, with equality if and only if the given circuit is already entirely contained in $\mathcal R$. By minimality of the given circuit, the conclusion follows.
\end{proof}

\begin{lem}\label{lem:chamber-graph-3}
Let $x\in \mathcal B$ be a chamber and $s, t \in S$ be distinct elements such that  $m_{st} < \infty$. 

Then the $\{s, t\}$-residue containing $x$ coincides with the union of all minimal circuits of length $2m_{st}$ in $\mathscr C(\mathcal B)$ containing $x$, a chamber $s$-adjacent to $x$ and a chamber $t$-adjacent to $x$. 
\end{lem}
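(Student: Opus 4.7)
The plan is to establish both inclusions by exploiting the fact that, since $m_{st}<\infty$, the $\{s,t\}$-residue $\mathcal P$ containing $x$ is a spherical rank-$2$ residue, hence a generalized $m_{st}$-gon, and that $\mathcal P$ is convex in $\mathscr C(\mathcal B)$.

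For the inclusion $(\supseteq)$, I would start with a minimal circuit $C$ of length $2m_{st}$ containing $x$, a chamber $x_s$ that is $s$-adjacent to $x$, and a chamber $x_t$ that is $t$-adjacent to $x$. Since $x_s \neq x_t$ and $C$ is induced, the chambers $x_s$ and $x_t$ must be the two neighbors of $x$ within $C$, so $(x_s, x, x_t)$ forms a sub-path of $C$. Let $y'$ denote the chamber of $C$ at $C$-distance $m_{st}$ from $x$. The minimality of $C$ forces $d(x, y') = m_{st}$ in $\mathscr C(\mathcal B)$, since otherwise a shorter geodesic concatenated with one arc of $C$ would yield a cycle strictly shorter than $2m_{st}$ in the convex hull of $C$. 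Hence the two arcs of $C$ from $x$ to $y'$ are minimal galleries of length $m_{st}$ starting respectively with the $s$-step to $x_s$ and the $t$-step to $x_t$, so $\delta(x, y')$ admits reduced expressions starting with both $s$ and $t$ and of length $m_{st}$. This forces $\delta(x, y')$ to coincide with the longest element of $\langle s, t \rangle$, and the gate property of $\mathcal P$ then yields $y' \in \mathcal P$. Convexity of $\mathcal P$ places both arcs of $C$ inside $\mathcal P$, giving $C \subseteq \mathcal P$.

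For the reverse inclusion $(\subseteq)$, I would take any chamber $y \in \mathcal P$ and pick an apartment $A$ of $\mathcal P$ containing both $x$ and $y$. Then $A$ is a $2m_{st}$-gon whose edges alternate between $s$- and $t$-adjacency, so $x$ has exactly one $s$-neighbor and one $t$-neighbor in $A$. It remains to verify that $A$ is a minimal circuit in $\mathscr C(\mathcal B)$. First, $A$ is induced: two distinct chambers of $\mathcal P$ can be adjacent in $\mathscr C(\mathcal B)$ only via $s$- or $t$-adjacency, since a panel of type $r \notin \{s, t\}$ meets the residue $\mathcal P$ in a single chamber. Second, the convex hull of $A$ in $\mathscr C(\mathcal B)$ coincides with its convex hull in $\mathscr C(\mathcal P)$ by convexity of $\mathcal P$, and the latter has girth $2m_{st}$, since $\mathscr C(\mathcal P)$ is the chamber graph of a generalized $m_{st}$-gon.

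I do not anticipate a significant obstacle: the argument for $(\supseteq)$ closely mirrors the final paragraph of the proof of Lemma~\ref{lem:chamber-graph-2}, while $(\subseteq)$ reduces to the classical girth computation for generalized polygons together with the observation about cross-type panels meeting $\mathcal P$. The only mild subtlety is the invocation of the gate property to ensure that $y'$ itself, and not merely its projection, lies in $\mathcal P$.
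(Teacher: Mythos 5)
Your proof is correct and takes essentially the same route as the paper's (very brief) proof: the containment of a minimal circuit in the $\{s,t\}$-residue repeats the argument of Lemma~\ref{lem:chamber-graph-2} (your observation that inducedness of the circuit forces the $s$- and $t$-adjacent chambers to be the two $C$-neighbours of $x$ is precisely the point needed to apply it), and the reverse inclusion is obtained from an apartment of the residue through $x$ and $y$. One justification should be tightened: the convex hull of $A$ has girth $2m_{st}$ not because $\mathscr C(\mathcal P)$ is the chamber graph of a generalized $m_{st}$-gon (that graph has girth $3$ when $\mathcal P$ is thick, since panels are cliques), but because apartments are convex, so the convex hull of $A$ in $\mathscr C(\mathcal P)$, and hence in $\mathscr C(\mathcal B)$ by convexity of $\mathcal P$, is $A$ itself; this same convexity also shows that non-consecutive chambers of $A$ are at gallery distance at least~$2$, which completes the check that $A$ is induced beyond your remark about panels of type $r \notin \{s,t\}$.
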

\begin{proof}
The  $\{s, t\}$-residue containing $x$  consists of chambers located on such circuits. Conversely, given such a circuit, the same argument as in the previous lemma shows that it is entirely contained in the residue in question. 
\end{proof}

\begin{proof}[Proof of Proposition~\ref{prop:types}]
Let $g \in \Aut(\mathscr C(\mathcal B))$.  

By Lemma~\ref{lem:chamber-graph-1}, the automorphism $g$ permutes the panels. In particular, for each chamber $c  \in \mathcal B$,    there is a permutation $\tau_c$ of $S$ such that $g$ maps the $s$-panel containing $c$ to the $\tau_c(s)$-panel containing $g(c)$. 

In view of Lemma~\ref{lem:chamber-graph-2}, we have $m_{st} = m_{\tau_c(s) \tau_c(t)}$ for all $s, t \in S$. Thus $\tau_c \in \Aut(\Gamma(W, S))$. 

We must prove that $\tau_c$ is independent of $c$. It suffices to show that $\tau_c = \tau_d$ for any two adjacent chambers $c, d$. Suppose that $c$ and $d$ are $s$-adjacent for some $s  \in S$. Hence $\tau_c(s) = \tau_d(s)$. Let $t \in S$ be different from $s$  and adjacent to $s$ in $\Gamma(W, S)$. Hence $m_{st} < \infty$.
  It follows from Lemmas~\ref{lem:chamber-graph-1}, \ref{lem:chamber-graph-2} and \ref{lem:chamber-graph-3} that $g$ permutes the spherical residues of rank~$2$. Thus $g$ maps the  $\{s, t\}$-residue containing $c$ to the $\{\tau_c(s), \tau_c(t)\}$-residue containing $g(c)$. Since $d$ is $s$-adjacent to $c$, it belongs to the former residue, and since $\tau_c(s) = \tau_d(s)$, we infer that $\tau_c(t) = \tau_d(t)$. It follows that $\tau_c$ and $\tau_d$ coincide on the $1$-ball around $s$ in the Davis diagram $\Gamma(W, S)$. In view of the hypothesis made on $\Gamma(W, S)$, it follows that $\tau_c = \tau_d$. 
  
The foregoing discussion yields a map $ \Aut(\mathscr C(\mathcal B)) \to  \Aut(\Gamma(W, S))$, which is clearly a group homomorphism. The kernel consists of those permutations of the chambers that permute the panels in a type-preserving way. Those are precisely the automorphisms of $\mathcal B$.
\end{proof}

\section{Automorphism groups of right-angled buildings with prescribed local action}\label{sec:Univ-Groups}
\subsection{Semi-regular right-angled buildings}\label{ss:RABs}

Throughout we fix a finite graph $\Gamma$, and consider the associated  right-angled Artin group $G(\Gamma)$, and right-angled Coxeter group $W(\Gamma)$. 

Let $S$ denote the vertex set of $\Gamma$. For $s, t \in S$, we  write $s \sim t$ if $s$ and $t$  are adjacent vertices of  $\Gamma$. 
For each $s \in S$, we write
\[ s^\perp := \{ t \in S \mid s \sim t, \ s\neq t\}. \]
We also set $s \cup s^\perp = \{s \} \cup s^\perp \subseteq S$. 
Now let $\mathcal B$ be a right-angled building of type~$\Gamma$.
An \textbf{$s$-tree-wall} is a residue of type $s \cup s^\perp$; these tree-walls play an important role in the structure of right-angled buildings.

A right-angled building $\mathcal B$ of type $\Gamma$ is called \textbf{semi-regular} if for each $s \in S$, any two $s$-panels have the same cardinality, say $q_s \geq 2$. In that case, we say that $\mathcal B$ is semi-regular of thickness $\mathbf q = (q_s)_{s \in S}$. 
 We recall from \cite[Proposition~1.2]{HagPau} that up to isomorphism, there is a unique semi-regular right-angled building  of type $\Gamma$ and of thickness $\mathbf q$. 
Recall from \cite[Corollary~11.7]{Davis} that this building can be constructed as follows. 

For each $s \in S$, let us fix a set $\Omega_s$ of cardinality $q_s$.
Choose a group $X_s$ which is in one-to-one correspondence with $\Omega_s$. Let $\mathbf X = (X_s)_{s \in S}$ and 
$$G_\Gamma(\mathbf X) = \bigast_{s \in S} X_s \big\slash \langle \! \langle [X_s, X_t]  : s, t \in S \text{ with } \{s, t\} \in E(\Gamma)\rangle \! \rangle$$ 
be the graph product of those groups $(X_s)_{s \in S}$ associated with the graph $\Gamma$. Then the chamber graph $\mathscr C(\mathcal B)$ of the building $\mathcal B$ can be viewed as the Cayley graph of $G_\Gamma(\mathbf X)$ with respect to the generating set $\bigcup_{s \in S} X_s$. The $s$-panels are the left cosets of $X_s$ in $G_\Gamma(\mathbf X)$. 

We assume now that if $s, t \in S$ belong to the same $\Aut(\Gamma)$-orbit, then $q_s = q_t$. In other words the map  $s \mapsto q_s$ is constant on the $\Aut(\Gamma)$-orbits. We assume in addition that for such a pair $s, t$, the groups $X_s$ and $X_t$ are two copies of the same group. Then the automorphism group   $\Aut(\Gamma)$ has a natural action  on $G_\Gamma(\mathbf X)$ by automorphisms, that  preserves the generating set $\bigcup_{s \in S} X_s$ and permutes the subsets $(X_s)_{s \in S}$ according to the $\Aut(\Gamma)$ action on the vertex set $S$. Thus it induces an action of $\Aut(\Gamma)$ on the set of chambers of $\mathcal B$ that preserves the adjacency, but permutes the set types. We may thus view $G_\Gamma(\mathbf X)$,   $\Aut(\mathcal B)$ and $ \Aut(\Gamma) = \Aut(W(\Gamma), S)$ as subgroups of  the type-permuting automorphism group $\widetilde{\Aut(\mathcal B)}$ (see \S\ref{sec:chamber-graph}). Since the natural homomorphism $\widetilde{\Aut(\mathcal B)} \to \Aut(\Gamma)$ is surjective in this case, 
we obtain the following. 

\begin{lem}\label{lem:autom-chamber-graph}
Let $\mathcal B$ be a semi-regular right-angled building of type $\Gamma$ such that the map $s \mapsto q_s$ is constant on the $\Aut(\Gamma)$-orbits. If $\Gamma$ satisfies (R3), then we have
$$ \Aut(\mathscr C(\mathcal B)) \cong \widetilde{\Aut(\mathcal B)} \cong \Aut(\mathcal B) \rtimes \Aut(\Gamma).$$
\end{lem}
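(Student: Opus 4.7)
The plan is to recognize this lemma as an application of Proposition~\ref{prop:types} combined with an explicit splitting of the type-permutation homomorphism $\pi$.

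First I would translate condition (R3) into the hypothesis of Proposition~\ref{prop:types}. Since $(W(\Gamma), S)$ is right-angled, its Davis diagram $\Gamma(W(\Gamma), S)$ carries an edge $\{s,t\}$ exactly when $m_{st} = 2$, i.e.\ when $\{s,t\}$ is an edge of $\Gamma$; thus $\Gamma(W(\Gamma),S) = \Gamma$, and the $1$-ball around $s$ in the Davis diagram is precisely $s \cup s^\perp$. Consequently (R3) is verbatim the hypothesis of Proposition~\ref{prop:types}, which immediately yields the first isomorphism $\Aut(\mathscr C(\mathcal B)) \cong \widetilde{\Aut(\mathcal B)}$.

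For the second isomorphism, I would examine the short exact sequence
\[ 1 \longrightarrow \Aut(\mathcal B) \longrightarrow \widetilde{\Aut(\mathcal B)} \xrightarrow{\;\pi\;} \Aut(\Gamma) \longrightarrow 1, \]
whose left-exactness holds by the very definition of $\pi$ (whose kernel is $\Aut(\mathcal B)$, as recorded in Section~\ref{sec:chamber-graph}). Right-exactness, together with a canonical splitting, is produced by the natural action of $\Aut(\Gamma)$ on the graph product $G_\Gamma(\mathbf X)$ which permutes the factors $X_s$ according to $\sigma \in \Aut(\Gamma)$, as already noted before the lemma. Since this action preserves the standard generating set $\bigcup_s X_s$ while permuting it in a type-permuting way, the identification of $\mathscr C(\mathcal B)$ with the Cayley graph of $G_\Gamma(\mathbf X)$ promotes it to a type-permuting automorphism of $\mathcal B$; the resulting homomorphism $\Aut(\Gamma) \to \widetilde{\Aut(\mathcal B)}$ is a section of $\pi$. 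Hence the sequence splits, yielding $\widetilde{\Aut(\mathcal B)} \cong \Aut(\mathcal B) \rtimes \Aut(\Gamma)$.

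The only substantive step is really the first one: matching (R3) with the $1$-ball condition in Proposition~\ref{prop:types} by observing that the Davis diagram coincides with $\Gamma$ in the right-angled case. The remaining work is essentially bookkeeping, since the action of $\Aut(\Gamma)$ on the graph product is available once the local groups $X_s$ have been chosen uniformly (for instance all infinite cyclic, as in the applications of this paper).
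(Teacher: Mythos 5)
Your proposal is correct and follows essentially the same route as the paper: condition (R3) is exactly the hypothesis of \cref{prop:types} (since the Davis diagram of a right-angled system is $\Gamma$ itself), and the semidirect decomposition comes from the natural $\Aut(\Gamma)$-action on the graph product $G_\Gamma(\mathbf X)$ providing a section of $\pi$, which is precisely the content of the discussion preceding the lemma that the paper's proof invokes. Your closing caveat about choosing the local groups uniformly (so that $\pi$ is indeed surjective) is a sensible precision that the paper leaves implicit.
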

\begin{proof}
Condition (R3) ensures that the type of $\mathcal B$ satisfies the hypothesis of Proposition~\ref{prop:types}. Hence the conclusion follows from the discussion preceding the proof. 
\end{proof}

\subsection{Legal colourings and universal groups}

Let $\mathcal B$ be a semi-regular right-angled building of type $\Gamma$ and thickness $\mathbf q = (q_s)_{s \in S}$. For each $s \in S$, let again $\Omega_s$ denote a set  of cardinality $q_s$. A \textbf{legal colouring} of $\mathcal B$ is a map 
$$\lambda \colon \mathcal B \to \prod_{s \in S} \Omega_s : c \mapsto (\lambda_s(c))_{s \in S}$$
such that for every $s \in S$, the map $\lambda_s \colon \mathcal B \to \Omega_s$ has the following properties:
\begin{enumerate}
\item the restriction of $\lambda_s$ to each $s$-panel $\mathcal P$ is a bijection of $\mathcal P$ to $\Omega_s$,
\item the restriction of $\lambda_s$ to each $t$-panel , with $t \neq s$, is constant.
\end{enumerate}

Let also $X_s \leq \Sym(\Omega_s)$ be a group acting regularly on $\Omega_s$, let $\mathbf X = (X_s)_{s \in S}$ and let $G_\Gamma(\mathbf X)$ be the graph product of those groups over $\Gamma$. In view of the defining relations of $G_\Gamma(\mathbf X)$, we see that for each $s \in S$, the identity on $X_s$ extends to a surjective homomorphism 
$$\rho_s \colon G_\Gamma(\mathbf X) \to X_s$$
whose restriction on $X_t$ is trivial for all $t \neq s$. As recalled above, the group $G_\Gamma(\mathbf X)$ acts regularly on the chambers of $\mathcal B$. Fix a base chamber $c_0 \in \mathcal B$. For each $c \in \mathcal B$, there is a unique $g_c \in G_\Gamma(\mathbf X)$ mapping $c_0$ to $c$. 

\begin{lem}\label{lem:legal-colouring}
The map $\lambda \colon \mathcal B \to \prod_{s \in S} X_s : c \mapsto \big(\rho_s(g_c)\big)_{s \in S}$ is a legal colouring. 
\end{lem}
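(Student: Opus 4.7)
The plan is to exploit the identification between chambers of $\mathcal B$ and elements of $G_\Gamma(\mathbf X)$ provided by the regular action, and then reduce both legal-colouring conditions to elementary statements about the homomorphism $\rho_s$. Specifically, since $G_\Gamma(\mathbf X)$ acts regularly on chambers starting from the base chamber $c_0$, the map $c \mapsto g_c$ is a bijection from $\mathcal B$ to $G_\Gamma(\mathbf X)$, sending $c_0 \mapsto 1$. Under this identification, the $s$-panel containing a chamber $c$ is precisely the left coset $g_c X_s$, because a chamber $h \cdot c_0$ is $s$-adjacent to $c = g_c \cdot c_0$ if and only if $g_c^{-1} h \in X_s$.

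For property~(i), I would fix an $s$-panel $\mathcal P$ containing $c$, so that the chambers of $\mathcal P$ are exactly the $g_c \cdot x \cdot c_0$ with $x \in X_s$. For any such chamber $c' = (g_c x)\cdot c_0$ we have $g_{c'} = g_c x$, and since $\rho_s$ is a homomorphism that restricts to the identity on $X_s$,
$$ \lambda_s(c') \;=\; \rho_s(g_c x) \;=\; \rho_s(g_c)\, x. $$
As $x$ ranges over $X_s$, the right-hand side ranges bijectively over $X_s = \Omega_s$, giving the required bijection $\mathcal P \to \Omega_s$.

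For property~(ii), I would fix $t \neq s$ and a $t$-panel $\mathcal Q$ containing $c$. As before, the chambers of $\mathcal Q$ are the $(g_c y) \cdot c_0$ with $y \in X_t$. Using again that $\rho_s$ is a homomorphism, and crucially that its restriction to $X_t$ is trivial whenever $t \neq s$, we obtain
$$ \lambda_s\bigl((g_c y) \cdot c_0\bigr) \;=\; \rho_s(g_c y) \;=\; \rho_s(g_c)\, \rho_s(y) \;=\; \rho_s(g_c), $$
which is independent of $y$. Hence $\lambda_s|_\mathcal Q$ is constant.

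There is no real obstacle here; the proof is essentially a bookkeeping exercise once one notes that (a) chambers are labelled by elements of $G_\Gamma(\mathbf X)$ via the regular action, (b) $s$-panels are the left cosets of $X_s$, and (c) the decomposition of $g_c$ into generators from $\bigcup_{s} X_s$ is projected by $\rho_s$ onto the product of its $X_s$-letters only. The only point that warrants a sentence of justification is the identification of $s$-panels with left cosets of $X_s$; everything else is formal.
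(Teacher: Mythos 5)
Your proof is correct and follows essentially the same route as the paper: identify chambers with elements of $G_\Gamma(\mathbf X)$ via the regular action, observe that the $t$-panel through $c$ is the coset $g_c X_t$ (acting on $c_0$), and then read off both legal-colouring conditions from the fact that $\rho_s$ restricts to the identity on $X_s$ and is trivial on $X_t$ for $t \neq s$. You merely spell out the coset computation that the paper compresses into ``in view of the definition of $\rho_s$, it directly follows.''
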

\begin{proof}
For each $t \in S$, the $t$-panel containing $c_0$ coincides with the orbit $X_t(c_0)$. Therefore, for each chamber $c$, the $t$-panel containing $c$ is the set 
$$\mathcal P = g_c X_t(c_0) = \{g_c x(c_0) \mid x \in X_t\}.$$ 
In other words, $\mathcal P$ corresponds to the left $X_t$-coset of $G_\Gamma(\mathbf X)$ containing $g_c$. 
In view of the definition of $\rho_s$, it directly follows that if $s = t$, the restriction of $\rho_s$ to $\mathcal P$ is a bijection onto $X_s$, whereas if $s \neq t$, then the restriction of $\rho_s$ to $\mathcal P$ is constant. 
\end{proof}

Given a legal colouring $\lambda \colon \mathcal B \to \prod_{s \in S} \Omega_s$,  and an automorphism $g \in \Aut(\mathcal B)$, the \textbf{local action} of $g$ at some $s$-panel $\mathcal P$ is defined as 
$$\sigma_\lambda(g, \mathcal P) = \lambda_s|_{g \mathcal P} \circ g \circ (\lambda_s|_{\mathcal P})^{-1}.$$
This is an element of the permutation group $\Sym(\Omega_s)$.  

Let us now fix a permutation group $F_s \leq \Sym(\Omega_s)$ for each $s \in S$. We set $\mathbf F = (F_s)_{s \in S}$.  Following \cite{DMSS, BDM21}, we  define
\[ \mathcal U(\mathbf F) = \mathcal U^\lambda_{\mathcal B}(\mathbf F) \leq \Aut(\mathcal B) \]
as the set of all those $g \in \Aut(\mathcal B)$ whose local action at every $s$-panel belongs to $F_s$, for all $s \in S$. It is the called the 
 \textbf{universal group with  prescribed local action}. We refer to \cite[Definition 2.14]{BDM21} for detailed information and background. Its  conjugacy class in $\Aut(\mathcal B)$ does not depend on the choice of the legal colouring $\lambda$, see \cite[Proposition~2.15]{BDM21}. 
 Examples of universal groups are provided by the following. 
 
\begin{prop}\label{prod:graph-products-are-univ-groups}
Let  $X_s \leq \Sym(\Omega_s)$ be a group acting regularly on $\Omega_s$ and set  $\mathbf X = (X_s)_{s \in S}$. We identify $X_s$ with $\Omega_s$ via an orbit map. We view the graph product  
$G_\Gamma(\mathbf X)$  as a subgroup of $\Aut(\mathcal B)$, and let $\lambda$ be the legal colouring   of $\mathcal B$ afforded by Lemma~\ref{lem:legal-colouring}.
\begin{enumerate}
\item  $G_\Gamma(\mathbf X)$  is isomorphic to the universal group  $\mathcal U^\lambda_{\mathcal B}(\mathbf X)$, where $X_s$ is identified with a subgroup $\Sym(X_s)$ via the regular action of $X_s$ on itself by left-multiplication.  
\item Let $H \leq \Aut(\mathcal B)$ be a group containing $G_\Gamma(\mathbf X)$. Let $c_0 \in \Ch(\mathcal B)$ and for each $s \in S$, let $\mathcal P_s$ be the $s$-panel containing $c_0$. Let also $F_s \leq \Sym(X_s)$ be the permutation group of $X_s$  formed by the local action at $\mathcal P_s$ of all elements of $\Stab_H(\mathcal P_s)$. Then $H$ is contained in the universal group $\mathcal U^\lambda_{\mathcal B}(\mathbf F)$.
\end{enumerate}
\end{prop}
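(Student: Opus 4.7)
For part (i), the plan is to compute the local action of an arbitrary element $g \in G_\Gamma(\mathbf X)$ at an arbitrary $s$-panel directly via the explicit parametrization of chambers afforded by the regular action of $G_\Gamma(\mathbf X)$ on $\Ch(\mathcal B)$. Fix an $s$-panel $\mathcal P = hX_s(c_0)$; its chambers are exactly the $hx(c_0)$ for $x \in X_s$, and Lemma~\ref{lem:legal-colouring} gives
$$\lambda_s(hx(c_0)) = \rho_s(hx) = \rho_s(h)\cdot x,$$
using that $\rho_s$ is a homomorphism and restricts to the identity on $X_s$. Applying $g$, we get $g(hx(c_0)) = (gh)x(c_0) \in g\mathcal P$ with colour $\rho_s(g)\rho_s(h)\cdot x$. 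Comparing, the local action $\sigma_\lambda(g, \mathcal P)$ is exactly left multiplication by $\rho_s(g) \in X_s$ on $X_s \cong \Omega_s$. This shows $G_\Gamma(\mathbf X) \subseteq \mathcal U^\lambda_{\mathcal B}(\mathbf X)$.

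To upgrade this inclusion to equality, I would use that $G_\Gamma(\mathbf X)$ already acts regularly on $\Ch(\mathcal B)$, so it suffices to verify that $\Stab_{\mathcal U^\lambda_{\mathcal B}(\mathbf X)}(c_0) = 1$. If $g$ lies in this stabilizer, then at each $s$-panel through $c_0$ its local action belongs to $X_s$ and fixes $\lambda_s(c_0)$; since $X_s$ acts regularly on $\Omega_s$, such a permutation must be trivial. Hence $g$ fixes the entire $1$-ball around $c_0$, and a straightforward induction on the gallery distance to $c_0$ forces $g = 1$.

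For (ii), the backbone of the argument is the cocycle identity
$$\sigma_\lambda(g_1 g_2, \mathcal P) = \sigma_\lambda(g_1, g_2\mathcal P) \circ \sigma_\lambda(g_2, \mathcal P),$$
which is immediate from the definition of the local action. First observe that $X_s \leq F_s$: by (i), each $x \in X_s \leq \Stab_H(\mathcal P_s)$ has local action at $\mathcal P_s$ equal to left multiplication by $x$. Now fix $h \in H$ and an $s$-panel $\mathcal P$. Using transitivity of $G_\Gamma(\mathbf X)$ on the $s$-panels, pick $g_1, g_2 \in G_\Gamma(\mathbf X)$ with $g_1 \mathcal P_s = \mathcal P$ and $g_2 \mathcal P_s = h\mathcal P$. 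Then $g_2^{-1} h g_1 \in H$ stabilizes $\mathcal P_s$, so its local action there lies in $F_s$ by the very definition of $F_s$. Writing $h = g_2 \cdot (g_2^{-1} h g_1) \cdot g_1^{-1}$ and applying the cocycle identity twice expresses $\sigma_\lambda(h, \mathcal P)$ as a product of three factors: the middle one is in $F_s$ by the preceding sentence, while the two outer ones are in $X_s \leq F_s$ by (i). Hence $\sigma_\lambda(h, \mathcal P) \in F_s$, and $h \in \mathcal U^\lambda_{\mathcal B}(\mathbf F)$.

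The main obstacle is really bookkeeping in (i): parametrizing chambers of $\mathcal P$ by elements of $G_\Gamma(\mathbf X)$, tracking colours through the homomorphisms $\rho_s$, and keeping the identification $\Omega_s \cong X_s$ coherent with the left-multiplication action. Once (i) is in place, the cocycle manipulation underlying (ii) is entirely routine.
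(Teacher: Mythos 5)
Your proof is correct and takes essentially the same route as the paper: for (i) the same colour computation showing that the local action of $g \in G_\Gamma(\mathbf X)$ at any $s$-panel is left translation by $\rho_s(g)$, with the freeness of the action of $\mathcal U^\lambda_{\mathcal B}(\mathbf X)$ (which the paper simply cites from earlier work on universal groups) established directly by your induction on gallery distance; and for (ii) the same conjugation of $h$ into $\Stab_H(\mathcal P_s)$ together with $X_s \leq F_s$, with the cocycle identity replacing the paper's explicit bookkeeping with $\rho_s(\tau)$, $\rho_s(\tau')$. No gaps.
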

\begin{proof}
(i) We retain the notation from Lemma~\ref{lem:legal-colouring}. For each $s \in S$ we identify $X_s$ with a subgroup of $\Sym(X_s)$.
Let $c \in \mathcal B$ and $s \in S$. The $s$-panel $\mathcal P$ containing $c$ coincides with $g_c X_s(c_0)$. Given $g \in G_\Gamma(\mathbf X)$, the panel $g \mathcal P$ is thus equal to $g g_c X_s(c_0)$. Therefore, the  local action of $g$ at $\mathcal P$ is the permutation 
$$X_s \to X_s \colon x \mapsto \rho_s(g g_c x) =\rho_s(g g_c) x,$$
which is the left multiplication by a fixed element of $X_s$. Hence it is contained in $X_s$. This proves that $G_\Gamma(\mathbf X)  \leq \mathcal U^\lambda_{\mathcal B}(\mathbf X)$. 

Since each $X_s \leq \Sym(X_s)$ acts freely, it follows from \cite[Proposition~3.14]{BDM21} that the universal group $\mathcal U^\lambda_{\mathcal B}(\mathbf X)$ acts freely on $\mathcal B$. On the other hand, $G_\Gamma(\mathbf X)$ acts transitively on $\mathcal B$. Hence we have $G_\Gamma(\mathbf X)  = \mathcal U^\lambda_{\mathcal B}(\mathbf X)$. 

\medskip \noindent (ii)
Let $\mathcal P$ be an $s$-panel for some $s \in S$, and let $g \in H$. 
In order to compute the local action of $g$ at $\mathcal P$, we choose elements $\tau, \tau' \in G_\Gamma(\mathbf X)$ such that $\tau(c_0) \in \mathcal P$ and $\tau'(c_0) \in g \mathcal P$. In particular, $\tau(\mathcal P_s) = \mathcal P$ and $\tau'(\mathcal P_s) = g\mathcal P$, so that 
$(\tau')^{-1} g \tau \in \Stab_H(\mathcal P_s)$. We set 
$$\varphi_g = \sigma_\lambda\big((\tau')^{-1} g \tau , \mathcal P_s\big).$$
Hence we have 
$$(\tau')^{-1} g \tau x(c_0) = \varphi_g(x)(c_0)$$
for all $x \in X_s$. Therefore $ g \tau x(c_0) = \tau' \varphi_g(x)(c_0)$. By definition of the legal colouring $\lambda$, we also have 
$$(\lambda|_{\mathcal P})^{-1} \colon X_s \to \mathcal P : x \mapsto \tau \rho_s(\tau)^{-1}x(c_0),$$
so that 
$$\sigma_\lambda(g, \mathcal P) : x \mapsto \rho_s\bigg(\tau' \varphi_g\big(\rho_s(\tau)^{-1}x\big)\bigg) = \rho_s(\tau') \varphi_g\big(\rho_s(\tau)^{-1}x\big),$$
since $\varphi_g\big(\rho_s(\tau)^{-1}x\big) \in X_s$. By hypothesis $X_s \leq G_\Gamma(\mathbf X) \leq H$. Moreover, $X_s$ stabilizes $\mathcal P_s$, and for each $y \in X_s$, its local action on $\mathcal P_s$ is the regular permutation $ X_s \to X_s: x \mapsto yx$. This ensures that 
$$\sigma_\lambda(g, \mathcal P) = \sigma_\lambda\big( \rho_s(\tau') , \mathcal P_s\big) \sigma_\lambda\big((\tau')^{-1} g \tau , \mathcal P_s\big)\sigma_\lambda\big(\rho_s(\tau)^{-1}  , \mathcal P_s\big) $$ 
is the composite of three permutations in $F_s$, thereby confirming   that $H \leq \mathcal U^\lambda_{\mathcal B}(\mathbf F)$. 
\end{proof}
		 
\begin{remark}\label{rem:transitive}
We emphasize that Proposition~\ref{prod:graph-products-are-univ-groups}(ii) is no longer true if one replaces the hypothesis that $G_\Gamma(\mathbf X) \leq H$ by the weaker condition that $H$ is transitive on $\mathcal B$. Indeed, consider for example the group $H = \Sym(n)$. Let $C_n \leq H$ be a cyclic subgroup of order~$n$ acting regularly on $\{1, \dots, n\}$. Hence  we have a \textbf{strict factorization} $H = C_n \Sym(n-1)$, which means that every element of $H$ can be written in a unique way as a product of an element of $C_n$ with an element of $\Sym(n-1)$. The group $H$ acts regularly on the chamber of a right-angled building $\mathcal B$ of spherical type and rank~$2$, i.e., a \textbf{generalized $2$-gon}, with type set $S = \{s, t\}$. The chamber set is identified with $H$, while the panels of type $s$ (resp. $t$) are the left cosets of $C_n$ (resp. $\Sym(n-1)$). As  $H = C_n \Sym(n-1)$ is a strict factorization,   the chamber graph  of $\mathcal B$ is a complete bipartite graph, so that $\mathcal B$ is indeed a generalized $2$-gon. The local action $F_s$ (resp. $F_t$) of the stabilizer of an $s$-panel (resp. $t$-panel) is isomorphic to $C_n$ (resp. $\Sym(n-1)$). The universal group $\mathcal U(F_s, F_t)$ is isomorphic to the direct product $C_n \times \Sym(n-1)$. Obviously $H = \Sym(n)$ is not isomorphic to a subgroup of $\mathcal U(F_s, F_t) \cong C_n \times \Sym(n-1)$, so the conclusion of Proposition~\ref{prod:graph-products-are-univ-groups}(ii)  does not hold in this case.
\end{remark}

\subsection{Almost simplicity}

A group $G$ is called \textbf{monolithic} if the intersection of all normal subgroups other than $\{e\}$ is non-trivial. That intersection is then called the \textbf{monolith} of $G$. We say that $G$ is \textbf{almost simple} if it is monolithic, and if its monolith is a non-abelian simple group. 

The  goal of this section is to provide conditions ensuring that $\mathcal U(\mathbf F)$ is almost simple, and to describe its monolith. 
To this end, we shall consider the following normal subgroups of $\mathcal U(\mathbf F)$:
\begin{itemize}
\item $\mathcal U(\mathbf F)^+$ is the subgroup generated by all chamber stabilizers;
\item $\mathcal U(\mathbf F)^\dagger$ is the subgroup generated by the pointwise stabilizers of all maximal spherical residues;
\item $\mathcal U(\mathbf F)^\ddagger$ is the subgroup generated by the pointwise stabilizers of all tree-walls.
\end{itemize}
Clearly, we have inclusions $\mathcal U(\mathbf F)^\ddagger \leq \mathcal U(\mathbf F)^\dagger \leq \mathcal U(\mathbf F)^+$. 
The relevance of the subgroup $\mathcal U(\mathbf F)^\ddagger$ comes from the following result, established in \cite{BDM23}.

\begin{prop}\label{prop:simplicity}
Assume that $\Gamma$ is irreducible with more than one vertex,   and that not all permutation groups $F_s \leq \Sym(\Omega_s)$ act freely on $\Omega_s$.
Then $\mathcal U(\mathbf F)$ is almost simple, and its monolith is  $\mathcal U(\mathbf F)^\ddagger$. 
\end{prop}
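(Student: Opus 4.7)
The plan is to establish three assertions that together yield the proposition: (a) $\mathcal U(\mathbf F)^\ddagger \neq \{1\}$; (b) every non-trivial normal subgroup $N \lhd \mathcal U(\mathbf F)$ contains $\mathcal U(\mathbf F)^\ddagger$; and (c) $\mathcal U(\mathbf F)^\ddagger$ is non-abelian simple. Together, (a) and (b) show that $\mathcal U(\mathbf F)^\ddagger$ is the monolith, and (c) upgrades almost-simplicity to its non-abelian form.

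For (a), I would exploit the hypothesis that some $F_s$ is non-free. Pick $f \in F_s \setminus \{1\}$ with a fixed point $\omega \in \Omega_s$, and let $\mathcal P$ be an $s$-panel with $c \in \mathcal P$ satisfying $\lambda_s(c) = \omega$. The defining independence property of the universal group (as recorded in \cite{BDM21}) allows one to construct $g \in \mathcal U(\mathbf F)$ whose local action at $\mathcal P$ is $f$ and whose action on the wings of the $s$-tree-wall $T$ containing $\mathcal P$ is the identity on all but one wing. Such a $g$ fixes $T$ pointwise and is non-trivial (since $f \neq 1$), hence lies in $\mathcal U(\mathbf F)^\ddagger \setminus \{1\}$.

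For (b), I would employ a commutator/smoothing technique of the kind developed by Tits for tree automorphisms and extended to buildings by Caprace and collaborators. Given $1 \neq n \in N$, choose a chamber $c$ with $n(c) \neq c$ and form commutators $[n,h] \in N$ with $h \in \mathcal U(\mathbf F)$ whose support is localized in successively smaller combinatorial neighborhoods on one side of a well-chosen tree-wall. The independence property ensures these commutators are non-trivial while stabilising pointwise ever larger parts of the building; iterating and passing to the limit yields non-trivial elements of $N$ whose support is contained in a single wing of some tree-wall. Transitivity of $\mathcal U(\mathbf F)$ on tree-walls of each type, combined with the irreducibility of $\Gamma$ (which prevents a direct-product decomposition of the building and of the wing-action), then gives $\mathcal U(\mathbf F)^\ddagger \subseteq N$ by taking conjugates and products.

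For (c), the cleanest route is to invoke a Tits-style simplicity theorem adapted to right-angled buildings, in which tree-walls play the role of edges in the tree case (compare \cite{Tits}, and the analogue in the building setting appearing in \cite{BDM23}). The required hypotheses — the independence property along tree-walls, absence of a global fixed point or fixed end of $\mathcal U(\mathbf F)$, and minimality of the action — all follow from the universality of $\mathcal U(\mathbf F)$ together with the irreducibility of $\Gamma$. That theorem asserts dichotomously that the subgroup generated by pointwise tree-wall stabilizers, namely $\mathcal U(\mathbf F)^\ddagger$, is either trivial or non-abelian simple, and (a) excludes the first case.

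The step I expect to be the main obstacle is the smoothing in (b): one must guarantee that iterating the commutator construction produces elements whose support strictly shrinks into a single wing, without the process degenerating when $n$ moves chambers across many types of panels at once. This requires a careful analysis of minimal galleries, and is precisely the point at which the irreducibility of $\Gamma$ is essential — on a reducible $\Gamma$, the building decomposes as a direct product and non-trivial normal subgroups supported in one factor obstruct the conclusion.
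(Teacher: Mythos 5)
Your overall decomposition (non-triviality of $\mathcal U(\mathbf F)^\ddagger$, every non-trivial normal subgroup contains it, simplicity) is the logically correct shape, but two of its three steps are not actually carried out, and the one you do carry out contains an error. In (a), the element $g$ you construct acts on the $s$-panel $\mathcal P$ through the non-trivial permutation $f$; since the $s$-tree-wall $T$ containing $\mathcal P$ is by definition the residue of type $s \cup s^\perp$ through $\mathcal P$, we have $\mathcal P \subseteq T$, so $g$ cannot fix $T$ pointwise. What the extension property of \cite{BDM21} really gives is an element acting like $f$ on $\mathcal P$ and trivially on the wing of chambers projecting to the chamber of colour $\omega$; to conclude $g \in \mathcal U(\mathbf F)^\ddagger$ you must exhibit a tree-wall entirely contained in that fixed wing (this is where irreducibility enters already at this stage, in the spirit of Lemma~\ref{lem:irred-Coxeter} and its use in Proposition~\ref{prop:+-equal-ddagger}); in spherical type no such tree-wall exists and the subgroup is indeed trivial.

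More seriously, steps (b) and (c) are where all the content of the proposition lies, and you leave both as sketches: the commutator/smoothing argument in (b) is exactly the technical heart of the matter (you yourself flag it as the main obstacle), and in (c) you fall back on ``a Tits-style simplicity theorem adapted to right-angled buildings \dots appearing in \cite{BDM23}''. That cited statement \emph{is} essentially the proposition: the paper's own proof consists precisely of quoting \cite[Prop.~7.22]{BDM23} for locally finite $\mathcal B$ and \cite[Proposition~3.4.8]{JensPhD} in general (with a remark that irreducibility is needed there). So either you allow yourself that citation, in which case the whole proposition follows at once and your partial constructions in (a)--(b) are superfluous, or you intend a self-contained proof, in which case the argument is incomplete at its core: the shrinking-of-support iteration in (b) and the dichotomy invoked in (c) are asserted, not proved, and no off-the-shelf Tits criterion applies verbatim to tree-walls in right-angled buildings without reproving the independence and minimality inputs that \cite{BDM23} establishes.
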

\begin{proof}
This follows from \cite[Proposition~7.22]{BDM23} if $\mathcal B$ is locally finite.
The general case, the proof of which is completely similar, can be found in \cite[Proposition~3.4.8]{JensPhD}. (Notice that the irreducibility assumption from \cite[Proposition~3.4.7]{JensPhD} is required for the proof of his Proposition 3.4.8.)
\end{proof}
	
It is thus natural to take a closer look at the group 	$\mathcal U(\mathbf F)^\ddagger$. We record the following observation, which holds without any assumption on the type or on the local actions. Under suitable assumptions, this will allow us to deduce that $\mathcal U(\mathbf F)^\ddagger$ is an open subgroup of $\mathcal U(\mathbf F)$. 
	
\begin{prop}\label{prop:ddagger}
We have $\mathcal U(\mathbf F)^\dagger = \mathcal U(\mathbf F)^\ddagger$.
\end{prop}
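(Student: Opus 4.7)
The inclusion $\mathcal U(\mathbf F)^\ddagger \le \mathcal U(\mathbf F)^\dagger$ is almost immediate, and I would dispose of it first: any tree-wall $W$ of type $s\cup s^\perp$ contains a maximal spherical residue. Indeed, if $T$ is any maximal clique of $\Gamma$ through $s$, then $T\subseteq s\cup s^\perp$, so $W$ contains residues of type $T$, each of which is maximal spherical. The inclusion $R\subseteq W$ then yields $\Fix(W)\le\Fix(R)\le\mathcal U(\mathbf F)^\dagger$ for any such $R$, and passing to the subgroups generated gives $\mathcal U(\mathbf F)^\ddagger\le\mathcal U(\mathbf F)^\dagger$.

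For the reverse inclusion $\mathcal U(\mathbf F)^\dagger\le\mathcal U(\mathbf F)^\ddagger$, I plan to show that $\Fix(R)\le\mathcal U(\mathbf F)^\ddagger$ for every maximal spherical residue $R$. Fix $R$ of type $T$, a chamber $c_0\in R$, and an element $g\in\Fix(R)$. For each $s\in S$ let $W_s$ denote the $s$-tree-wall through $c_0$ (of type $s\cup s^\perp$); for $s\in T$ one has $R\subseteq W_s$. The goal is to express $g$ as a finite product of elements, each fixing some tree-wall (not necessarily through $c_0$) pointwise.

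The key tool I would exploit is the product decomposition of right-angled buildings applied to tree-walls: because $s$ commutes in $W(\Gamma)$ with every element of $s^\perp$, the parabolic $W_{s\cup s^\perp}$ equals $\langle s\rangle\times W_{s^\perp}$, so each tree-wall $W_s$ splits as a direct product of the $s$-panel through $c_0$ with a residue of type $s^\perp$. This decomposition lifts to pointwise fixators and reduces any local computation on $W_s$ to two independent factors. Using this, I would multiply $g$ successively by elements of $\Fix(W^{(\alpha)})$ for a carefully chosen sequence of tree-walls $W^{(\alpha)}$, each step arranged so that the resulting product still lies in $\Fix(R)$ but agrees with the identity on an ever-larger portion of $\mathcal B$. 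The iterated-product description of the chamber stabilizer $\mathcal U(\mathbf F)_{c_0}$ along the tree-wall trees developed in \cite{BDM21, BDM23} provides the combinatorial framework to realize this iteration.

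The main obstacle is the bookkeeping: the support of $g$ is generally infinite, so the sequence of multiplications must be organized so that the result can be expressed as a finite product of generators of $\mathcal U(\mathbf F)^\ddagger$. I would handle this by ordering the tree-walls along combinatorial gallery distance from $R$ and invoking the fact that, in the iterated-product description of $\mathcal U(\mathbf F)_{c_0}$, any chamber-fixing element decomposes as a product of coordinates indexed by tree-walls, with only finitely many of these coordinates recording data that is not already a tree-wall fixator on the nose. The condition $g\in\Fix(R)$ forces every remaining coordinate to itself be a tree-wall fixator, yielding the required finite factorization and establishing $\Fix(R)\le\mathcal U(\mathbf F)^\ddagger$.
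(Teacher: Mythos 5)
Your easy direction is fine: every maximal clique through $s$ lies in $s\cup s^\perp$, so each $s$-tree-wall contains a maximal spherical residue $\mathcal R$, and $\Fix(\mathcal W)\le\Fix(\mathcal R)$ gives $\mathcal U(\mathbf F)^\ddagger\le\mathcal U(\mathbf F)^\dagger$ (the paper records this inclusion as immediate). The problem is the reverse inclusion, which is the entire content of the proposition, and there your argument stops exactly where the work begins. The two assertions carrying all the weight --- that a chamber-fixing element of $\mathcal U(\mathbf F)$ ``decomposes as a product of coordinates indexed by tree-walls, with only finitely many of these coordinates recording data that is not already a tree-wall fixator on the nose'', and that $g\in\Fix(\mathcal R)$ ``forces every remaining coordinate to itself be a tree-wall fixator'' --- are not results available in \cite{BDM21} or \cite{BDM23}: no such coordinate decomposition of the chamber stabilizer into tree-wall fixators is established there, and the second assertion is essentially a restatement of the inclusion $\Fix(\mathcal R)\le\mathcal U(\mathbf F)^\ddagger$ that you are trying to prove. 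The difficulty you yourself flag is genuine: $g$ has infinite support, while membership in $\mathcal U(\mathbf F)^\ddagger$ requires an \emph{exact finite} product of tree-wall fixators, and ordering tree-walls by gallery distance from $\mathcal R$ provides no mechanism for terminating the successive corrections after finitely many steps. As written, the proposal begs the question.

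For comparison, the paper's proof never attempts to factor $g$. It argues by induction on the number of vertices of $\Gamma$: given a maximal spherical residue $\mathcal R$ and an $s$-tree-wall $\mathcal T\supseteq\mathcal R$, since $\Fix(\mathcal T)\le\mathcal U(\mathbf F)^\ddagger$ by definition, it suffices to show that $\Fix(\mathcal R)$ and $\Fix(\mathcal R)\cap\mathcal U(\mathbf F)^\ddagger$ have the same image in $\Stab(\mathcal T)/\Fix(\mathcal T)\le\Aut(\mathcal T)\cong\Aut(\mathcal T_s)\times\Aut(\mathcal T_0)$. The product decomposition of the tree-wall that you mention is used precisely there, together with two inputs absent from your sketch: the image of $\Stab(\mathcal T_0)$ in $\Aut(\mathcal T_0)$ is again a universal group of strictly smaller rank (\cite[Lemma~3.14]{DMSS}), to which the induction hypothesis applies, and the extension result \cite[Prop.~3.8]{BDM21} identifies the image of $\mathcal U(\mathbf F)^\ddagger$ in $\Aut(\mathcal T_0)$ with the $\ddagger$-subgroup of that smaller universal group. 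To salvage your approach you would have to prove a precise factorization statement for chamber-fixing elements along tree-walls, which is at least as hard as the inductive reduction the paper carries out; either supply such a statement with proof, or restructure the argument along the lines above.
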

\begin{proof}
We work by induction on the rank $r$, defined as the cardinality of the vertex set $S= V(\Gamma)$. If $r=0$ or $r=1$, there is nothing to prove. 

Let $G = \mathcal U(\mathbf F)$ and set $G^\ddagger =  \mathcal U(\mathbf F)^\ddagger$.
Let also $\mathcal R \subseteq \mathcal B$ be a maximal spherical residue. We must show that its pointwise stabilizer $\Fix_G(\mathcal R)$ is contained in $G^\ddagger$. 

Choose $s \in S$ an element belonging to the type of $\mathcal R$, and let $\mathcal T$ be a residue of type $s \cup s^\perp$ containing a chamber of $\mathcal R$. Thus $\mathcal T$ is an $s$-tree-wall and we have $\mathcal R \subseteq \mathcal T$. 

Observe that  the pointwise stabilizer $\Fix_G(\mathcal T)$ of $\mathcal T$ is contained in $G^\ddagger$ by definition. Therefore, in order to show that $\Fix_G(\mathcal R) \leq G^\ddagger$, it suffices to show that the subgroups $\Fix_G(\mathcal R)$ and $\Fix_G(\mathcal R) \cap G^\ddagger$, which are both contained in $\Stab_G(\mathcal T)$, have the same image under the canonical projection $\Stab_G(\mathcal T) \to \Stab_G(\mathcal T)/\Fix_G(\mathcal T)$. We consider the latter quotient as a subgroup of the automorphism group $\Aut(\mathcal T)$ of the residue $\mathcal T$, viewed as a right-angled building.

Choose an $s$-panel $\mathcal T_s$ in $\mathcal R \cap \mathcal T$. We may decompose $\mathcal T$ as the product $\mathcal T_s \times \mathcal T_0$, where  $ \mathcal T_0$ is a residue of type $s^\perp$ containing a chamber of $\mathcal T_s$. Accordingly, we have a decomposition $\mathcal R \cong \mathcal T_s \times \mathcal R_0$. The image of $\Fix_G(\mathcal R)$  in $\Aut(\mathcal T)$ acts trivially on the factor $\mathcal T_s$ of this decomposition. Let us look at the image in  $\Aut(\mathcal T_0)$. 

The residue $\mathcal T_0$ is a right-angled building of type $J$, where $J \subset S$ has cardinality at most~$\leq r-1$. 
Moreover, we know by \cite[Lemma~3.14]{DMSS} that the image of $\Stab_G(\mathcal T_0)$ in  $\Aut(\mathcal T_0)$ is itself a universal group of the form $H = \mathcal U_{\mathcal T_0}(\mathbf F|_J)$, where the local action $\mathbf F|_J$ is the tuple $(F_t)_{t \in J}$. 
The induction hypothesis applies to $H$, and ensures that $H^\dagger \leq H^\ddagger$. In particular, we have 
$$\Fix_H(\mathcal R_0) \leq H^\ddagger.$$
We claim that $H^\ddagger$ coincides with the image of $\Stab_G(\mathcal T_0) \cap G^\ddagger $ in $\Aut(\mathcal T_0)$. Indeed,   every tree-wall $\mathcal W$ of the residue $\mathcal T_0$ can be viewed as the intersection with  $\mathcal T_0$ of a tree-wall $\widetilde{\mathcal W}$ of $\mathcal B$, whose type contains $s$. In addition, every element of $\Fix_{H}(\mathcal W)$ can be extended to an element of $G$ fixing $\widetilde{\mathcal W}$ pointwise, as a consequence of \cite[Proposition~3.8]{BDM21}.  This shows that the image of $\Fix_{G}(\widetilde{\mathcal W})$ in $\Aut(\mathcal T_0)$ coincides with $\Fix_{H}(\mathcal W)$. Since this holds for every tree-wall $\mathcal W \subseteq \mathcal T_0$, the claim follows.

By definition, the image of $\Fix_G(\mathcal R_0)$ in $\Aut(\mathcal T_0)$ is contained in $\Fix_H(\mathcal R_0)$. We have seen above that  $\Fix_H(\mathcal R_0) \leq H^\ddagger$. Therefore,  the claim implies that  $\Fix_G(\mathcal R_0)$ and $\Fix_G(\mathcal R_0) \cap G^\ddagger$ have the same image in $\Aut(\mathcal T_0)$. 
 
In view of the canonical decompositions $\mathcal T \cong \mathcal T_s \times \mathcal T_0$ and  $\mathcal R \cong \mathcal T_s \times \mathcal R_0$, and since $\Fix_G(\mathcal R)$ acts trivially on the factor $\mathcal T_s$, we see that the natural image of $\Fix_G(\mathcal R)$ in $ \Aut(\mathcal T) \cong \Aut(\mathcal T_s) \times \Aut(\mathcal T_0)$ coincides with the image of $\Fix_G(\mathcal R) \cap G^\ddagger$. This finishes the proof.
\end{proof}
	
We shall now show that the equality $\mathcal U(\mathbf F)^+ = \mathcal U(\mathbf F)^\dagger = \mathcal U(\mathbf F)^\ddagger$ holds if the type is irreducible. For that purpose, we need to consider again the projection map between residues in buildings. In the chamber system approach to building, we recall that given two residues $\mathcal R_1, \mathcal R_2$ in  a building $\mathcal B$, the projection $\proj_{\mathcal R_1}(\mathcal R_2)$ is a residue whose chamber set coincides with $\{\proj_{\mathcal R_1}(c) \mid c \in \Ch(\mathcal R_2)\}$, where $\proj_{\mathcal R_1}(c)$ is the unique chamber of $\mathcal R_1$ at minimal numerical distance from $c$, see \cite[Definition~5.35 and Lemma~5.36]{AB08}. 

\begin{lem}\label{lem:irred-Coxeter}
Let $\Gamma$ be irreducible and let $\mathcal A$ be a  building of right-angled type~$\Gamma$. 
For each spherical residue  $\mathcal R$ in $\mathcal A$   and each chamber $c \in \mathcal R$, there is a  tree-wall $\mathcal T$  such that $\proj_{\mathcal R}(\mathcal T) = \{c\}$.
\end{lem}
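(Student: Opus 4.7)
The plan is to translate the problem into descent-set combinatorics in $W = W(\Gamma)$. First I would reduce, by left-multiplication, to the case $c = e$ and $\mathcal R = W_T$, where $T \subseteq S$ is the clique carrying the type of $\mathcal R$. The standard projection formula for parabolic subgroups gives $\proj_{W_T}(d) = e$ iff $d$ is the shortest element of its coset $dW_T$, equivalently iff $D_L(d) \cap T = \emptyset$, where $D_L(d) = \{x \in S : \ell(xd) < \ell(d)\}$. So I would seek a tree-wall of the form $\mathcal T = \alpha W_{s \cup s^\perp}$, all of whose chambers have no left descent in $T$. Taking $\alpha$ to be a minimum-length coset representative (so that $\ell(\alpha w) = \ell(\alpha) + \ell(w)$ for every $w \in W_{s \cup s^\perp}$), the right-angled commutation relations give
\[ D_L(\alpha w) = D_L(\alpha) \cup \{x \in D_L(w) : x \text{ commutes with every letter of } \alpha\}, \]
so the task reduces to picking $s \in S$ and $\alpha \in W$ such that (a) $\supp(\alpha) \cap T = \emptyset$, and (b) for each $y \in T \cap (s \cup s^\perp)$, some letter of $\alpha$ lies outside $y \cup y^\perp$.

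The easy case is when some $s \in S \setminus T$ satisfies $(s \cup s^\perp) \cap T = \emptyset$: then $\alpha = e$ works, $\mathcal T = W_{s \cup s^\perp}$, and the descent condition is automatic. Otherwise $T$ is a dominating clique in $\Gamma$, and I would use irreducibility to pick a valid $s \in S \setminus T$ with $T \not\subseteq s \cup s^\perp$ (existence: for any $t \in T$ the set $S \setminus (t \cup t^\perp)$ is nonempty, since $\Gamma$ is not a join). For each $y \in T \cap (s \cup s^\perp)$, irreducibility again yields $z_y \in S \setminus (y \cup y^\perp)$; note $z_y \notin T$ automatically, because $T$ is a clique containing $y$ so $T \setminus \{y\} \subseteq y^\perp$. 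Setting $\alpha$ to be the product of the distinct $z_y$'s would then verify (a) and (b).

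The main obstacle I foresee is a degenerate sub-case, which does arise in some irreducible graphs: every valid choice of $s$ satisfies $S = T \cup (s \cup s^\perp)$. Then every allowable letter of $\alpha$ lies in $s \cup s^\perp$, forcing $\alpha \in W_{s \cup s^\perp}$ and $\mathcal T = W_{s \cup s^\perp}$; since $T' := T \cap (s \cup s^\perp) \neq \emptyset$, this tree-wall intersects $\mathcal R$ in a proper subresidue and the projection is not a singleton. I would resolve this by allowing $\alpha$ to contain an additional letter $t_0 \in T \setminus (s \cup s^\perp)$ (nonempty by the choice of $s$), placed in the reduced expression so as to be preceded by a guard letter $z_0 \in S \setminus (t_0 \cup t_0^\perp)$ that does not commute with $t_0$. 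Irreducibility yields such $z_0$, and it lies outside $T$ for the same clique-reason. The guard keeps $t_0$ out of $D_L(\alpha)$ (so (a) persists), while $t_0 \notin s \cup s^\perp$ ensures $\alpha \notin W_{s \cup s^\perp}$, so that the resulting $\mathcal T$ is genuinely a different tree-wall satisfying the conclusion.
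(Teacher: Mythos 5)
Your reduction to descent-set combinatorics is sound (modulo a harmless slip: with residues written as left cosets, $\proj_{W_T}(d)=e$ means $d$ is shortest in $W_Td$, i.e. $D_L(d)\cap T=\emptyset$), and it is a genuinely different, purely combinatorial route from the paper's argument, which produces the tree-wall geometrically via strongly separated hyperplanes and rank rigidity for CAT(0) cube complexes. But there is a real gap: your displayed formula for $D_L(\alpha w)$, and hence the sufficiency of your conditions (a) and (b), is only valid when $\alpha$ is a minimal-length representative of $\alpha W_{s\cup s^\perp}$ --- a hypothesis you state but never verify for the $\alpha$ you construct. Nothing forces the witnesses $z_y$ to lie outside $s\cup s^\perp$: for $y\in T\cap s^\perp$, an element $z_y\notin y\cup y^\perp$ may well be adjacent to $s$. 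When that happens, $\alpha$ can lie in, or be partially absorbed into, $W_{s\cup s^\perp}$; the minimal representative of the coset then loses the witness, and the tree-wall you output fails (in the extreme case $\alpha\in W_{s\cup s^\perp}$, the tree-wall is the one through $e$, which contains the chamber $y$, and $\proj_{\mathcal R}(y)=y\neq e$).

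This failure actually occurs. Let $\Gamma$ be the $5$-cycle $p\,t_1\,r\,t_2\,q$ with the single chord $t_1t_2$ (edges $pt_1,t_1r,rt_2,t_2q,qp,t_1t_2$); it is irreducible. Take $T=\{t_1,t_2\}$ and $c=e$. Then $T$ is dominating, so you are in your second case, and your admissible choices are $s=p$ or $s=q$ (you exclude $r$ since $T\subseteq r\cup r^\perp$). For $s=p$ the only possible witness for $t_1$ is $z_{t_1}=q$, which lies in $p^\perp$; so $\alpha=q\in W_{p\cup p^\perp}$, the tree-wall is $W_{\{p,t_1,q\}}$, and it contains the chamber $t_1$, whose projection to $W_T$ is $t_1\neq e$. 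The choice $s=q$ fails symmetrically. Your degenerate sub-case does not trigger here (for $s=p$ one has $T\cup(s\cup s^\perp)=\{t_1,t_2,p,q\}\neq S$), so your patch is never invoked; and even where it applies it would not suffice, because $t_0\in T$ commutes with every $y\in T$ (a clique) and hence is never a witness, while the true witnesses can still be stripped in passing to the $W_{s\cup s^\perp}$-reduced representative. The lemma does hold for this graph, e.g. via $s=r$ with $\alpha=pq$ (a choice your recipe forbids), or via $s=p$ with $\alpha=qr$, where the extra letter $r$ acts as a guard keeping $q$ from being absorbed into $W_{p\cup p^\perp}$. So the missing ingredient is a mechanism guaranteeing that witnesses landing in $s\cup s^\perp$ survive in a $W_{s\cup s^\perp}$-reduced representative (or a different selection of $s$); without it, the construction does not prove the statement.
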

\begin{proof}
It  follows from from \cite[Lemma~5.36 and Proposition~5.37]{AB08} that for any choice of  chambers $(c_1, c_2) \in \Ch(\mathcal R_1)\times \Ch(\mathcal R_2)$ and for any apartment $\mathcal A$ containing $c_1$ and $c_2$, the residue $\proj_{\mathcal R_1}(\mathcal R_2)$ contains a chamber of $\mathcal A$, and the type of $\proj_{\mathcal R_1}(\mathcal R_2)$  is entirely determined by considering the intersections $\mathcal A\cap \mathcal R_1$ and $\mathcal A\cap \mathcal R_2$, viewed as residues in the thin building $\mathcal A$. (For a description of $\proj_{\mathcal R_1}(\mathcal R_2)$ in the simplicial complex approach to buildings instead of the chamber system approach, and an alternative argument showing   that it can be computed within $\mathcal A$, we  refer to \cite[Proposition~5.37]{AB08} or to \cite[3.19]{Tits74}.)

Therefore, in order to prove the lemma, it suffices to consider the special case where the given building $\mathcal A$ is thin. We assume henceforth that it is the case. 
Since the type is right-angled, it follows that the Davis realization $|\mathcal A|$ is a CAT(0) cube complex whose $1$-skeleton coincides with the Cayley graph of $W(\Gamma)$ with respect to its natural Coxeter generating set. The hyperplanes of $|\mathcal A|$ coincide with the walls of $\mathcal A$. Moreover, an $s$-tree-wall is nothing but the union of all $s$-panels across a given wall. 

We claim that   $|\mathcal A|$ contains a pair of  $H_1, H_2$ of  \textbf{strongly separated} hyperplanes:  this means that no other hyperplane of $|\mathcal A|$ crosses both $H_1$ and $H_2$. 
In order to see that, we shall invoke \cite[Proposition~5.1]{CapSag}. This requires checking that the hypotheses of the latter are satisfied, namely  the following three conditions: the cube complex $|\mathcal A|$ is irreducible, the $W(\Gamma)$-action has no fixed point at infinity, and the $W(\Gamma)$-action on $|\mathcal A|$ is \textbf{essential}, i.e.,  the $W(\Gamma)$-orbits contain points arbitrarily far away from any half-space. 

The irreducibility of $|\mathcal A|$ means that the set of hyperplanes cannot be partitionned into two non-empty subsets such that each hyperplane in the first crosses each hyperplane in the second. This would give rise to a partition of the set of reflections in $W(\Gamma)$ into two commuting subsets, contradicting that $W(\Gamma)$ is of irreducible type. 

If $\xi$ were a point at infinity fixed by all elements of $W(\Gamma)$, it would be fixed by each reflection. Let $\mathcal R_0$ be a maximal spherical residue of $\mathcal A$. It corresponds to a maximal cube $|\mathcal R_0|$ in $|\mathcal A|$. Let $p$ be the center of $|\mathcal R|$. 
Each reflection $r$ preserving $\mathcal R_0$ fixes pointwise the geodesic ray $[p, \xi)$. Hence that ray  is entirely contained in the hyperplane  $H$ associated with $r$. Since $|\mathcal R_0|$ is a maximal cube, the intersection of all those hyperplanes is $\{p\}$. Thus $W(\Gamma)$ cannot fix $\xi$. 

Let $H$ be a hyperplane in $|\mathcal A|$. Let $r$ be the reflection fixing $H$ pointwise. There is $w \in W(\Gamma)$ such that $wrw^{-1}$ is a Coxeter generator $s \in S$. Since $\Gamma$ is irreducible, there exists $t \in S$ such that $m_{st} = \infty$. It follows that $r' = w^{-1} t w$ is a reflection fixing pointwise a hyperplane $  H'$ that does not cross $  H$. Therefore the orbits of the subgroup $\langle r, r'\rangle$ contain points that are arbitrarily far away from each of the two half-spaces defined by $H$. It follows that the  $W(\Gamma)$-action on $|\mathcal A|$ is {essential}. 

This shows that the hypotheses of  \cite[Proposition~5.1]{CapSag} are satisfied, hence there is a pair of strongly separated hyperplanes $H_1, H_2$. 
Let $D \leq W(\Gamma)$ be the dihedral group generated by the reflections through $H_1$ and $H_2$. Any two hyperplanes in the $D$-orbit of $H_1$ are strongly separated. We can thus find some $H_0$ in that orbit, such that $H_0$ does not cross any  hyperplane of the cube  $|\mathcal R|$ associated with the spherical residue  $\mathcal R$ given by hypothesis. 

Let $\mathcal T_0$ be the tree-wall in $\mathcal A$ corresponding to $H_0$. It follows that $\proj_{\mathcal R}(\mathcal T_0)$ is reduced to a single chamber, say $c_0$.  The stabilizer $\Stab_{W(\Gamma)}(\mathcal R)$ contains a unique element $w$ sending $c_0$ to $c$. It follows that the tree-wall $\mathcal T = w(\mathcal T_0)$ has the required property. 
\end{proof}

\begin{prop}\label{prop:+-equal-ddagger}
If $\Gamma$ is irreducible, then we have $\mathcal U(\mathbf F)^+ =  \mathcal U(\mathbf F)^\ddagger$. 
\end{prop}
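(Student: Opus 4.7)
My plan is to prove the nontrivial containment $\mathcal U(\mathbf F)^+ \leq \mathcal U(\mathbf F)^\ddagger$, since the reverse inclusions $\mathcal U(\mathbf F)^\ddagger \leq \mathcal U(\mathbf F)^\dagger \leq \mathcal U(\mathbf F)^+$ are immediate from the definitions, and the equality $\mathcal U(\mathbf F)^\dagger = \mathcal U(\mathbf F)^\ddagger$ has already been established in Proposition~\ref{prop:ddagger}. Setting $G = \mathcal U(\mathbf F)$, this amounts to showing that $\Stab_G(c) \leq G^\ddagger$ for every chamber $c \in \mathcal B$.

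Fix such a $c$, an element $g \in \Stab_G(c)$, and a maximal spherical residue $\mathcal R \ni c$, whose type $J \subseteq S$ is a maximal clique of $\Gamma$. Since $g$ is type-preserving, it stabilizes $\mathcal R$. Applying Lemma~\ref{lem:irred-Coxeter} inside an apartment of $\mathcal B$ containing $\mathcal R$, I obtain a tree-wall $\mathcal T$, of some type $K = s \cup s^\perp$, with $\proj_{\mathcal R}(\mathcal T) = \{c\}$. Because any chamber in $\mathcal R \cap \mathcal T$ would project to itself, the hypothesis $|\proj_{\mathcal R}(\mathcal T)| = 1$ forces $J \cap K = \emptyset$. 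I then replace $\mathcal T$ by the $K$-residue through $c$; since in any Coxeter system the concatenation of a reduced expression in $W_K$ with one in $W_J$ remains reduced when $J \cap K = \emptyset$, the new $\mathcal T$ still satisfies $\proj_{\mathcal R}(\mathcal T) = \{c\}$, and now $c \in \mathcal T$.

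The heart of the argument is then a matching claim: the two restriction maps $\Stab_G(c) \to \Aut(\mathcal R)$ and $\Fix_G(\mathcal T) \to \Aut(\mathcal R)$ (both well-defined because $c \in \mathcal R \cap \mathcal T$ and $g$ is type-preserving) have the same image. Granting this, I can pick $h \in \Fix_G(\mathcal T)$ with $h|_{\mathcal R} = g|_{\mathcal R}$; then $gh^{-1}$ fixes $\mathcal R$ pointwise, so $gh^{-1} \in \Fix_G(\mathcal R) \leq G^\dagger = G^\ddagger$, and since $h \in G^\ddagger$ by definition, we conclude $g = (gh^{-1})h \in G^\ddagger$, as desired.

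The main obstacle will be verifying this matching claim. Since $\mathcal R$ is spherical right-angled of type $J$, it decomposes as a direct product of the $J$-panels through $c$, and I expect both images to coincide with the full product $\prod_{s \in J}(F_s)_{\lambda_s(c)}$ of point-stabilizers. For $\Stab_G(c)$, this is essentially built into the definition of the universal group. For $\Fix_G(\mathcal T)$, the disjointness $J \cap K = \emptyset$ places the $J$-panels through $c$ outside of $\mathcal T$, so fixing $\mathcal T$ pointwise imposes no constraint on the local actions at those panels; producing an actual element of $\Fix_G(\mathcal T)$ with any prescribed such local actions is where I would invoke the extension technique from \cite[Prop.~3.8]{BDM21}, which is exactly the same tool already used in the proof of Proposition~\ref{prop:ddagger}. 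Making this extension/compatibility argument watertight, in particular checking that one can simultaneously impose triviality on all of $\mathcal T$ while prescribing the $J$-panel actions at $c$, is the principal technical step I anticipate.
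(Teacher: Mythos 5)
Your overall skeleton (reduce to $G_c\leq G^\ddagger$, find $h\in\Fix_G(\mathcal T)$ agreeing with $g$ on $\mathcal R$, and finish via $gh^{-1}\in\Fix_G(\mathcal R)\leq G^\dagger=G^\ddagger$ using \cref{prop:ddagger}) is exactly the paper's strategy, but your reduction to a tree-wall \emph{through} $c$ of type disjoint from $J$ contains a genuine gap. First, the inference ``$\proj_{\mathcal R}(\mathcal T)=\{c\}$ forces $J\cap K=\emptyset$'' is invalid: the tree-wall produced by \cref{lem:irred-Coxeter} is typically disjoint from $\mathcal R$ (its wall is chosen strongly separated from, in particular not crossing, the walls of the cube $|\mathcal R|$), so the observation that chambers of $\mathcal R\cap\mathcal T$ project to themselves gives no information about the types; singleton projection only forces $J\cap K^{w}=\emptyset$ for a suitable conjugate, not $J\cap K=\emptyset$. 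Second, and more seriously, the object you want to reduce to need not exist: a tree-wall containing $c$ whose type $K=s\cup s^\perp$ meets $J$ would contain a whole panel of $\mathcal R$, so your plan requires some $s\in S$ with $(s\cup s^\perp)\cap J=\emptyset$. Take $\Gamma$ with vertices $a,b,c,d,e$ and edges $ab,ac,bc,ad,be$: this graph is irreducible (its complement is connected), $J=\{a,b,c\}$ is a maximal clique, and every set $s\cup s^\perp$ meets $J$. For this $\Gamma$ and this $\mathcal R$ no admissible $\mathcal T$ through $c$ exists, so the ``matching claim'' as you set it up cannot even be formulated, while the proposition is asserted for all irreducible $\Gamma$.

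The paper avoids this by keeping the far-away tree-wall $\mathcal T$ with $\proj_{\mathcal R}(\mathcal T)=\{c\}$ and producing the matching element panel-by-panel: for each $j\in J$ and the $j$-panel $\sigma_j$ through $c$, the extension result \cite[Prop.~3.8]{BDM21} yields $g_j\in G$ with $g_j|_{\sigma_j}=g|_{\sigma_j}$ fixing every chamber $d$ with $\proj_{\sigma_j}(d)=c$; since $\proj_{\sigma_j}=\proj_{\sigma_j}\circ\proj_{\mathcal R}$, every chamber of $\mathcal T$ has this property, so $g_j\in\Fix_G(\mathcal T)\leq G^\ddagger$, and $\prod_{j\in J}g_j$ agrees with $g$ on $\mathcal R=\prod_{j\in J}\sigma_j$. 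So the tool you anticipated (\cite[Prop.~3.8]{BDM21}) is indeed the right one, but it must be applied directly to the panels with the projection condition, not after relocating the tree-wall; your argument as written does not go through. (A further small point: even the claim that the image of $G_c$ in $\Aut(\mathcal R)$ is the \emph{full} product $\prod_{s\in J}(F_s)_{\lambda_s(c)}$ is not needed and would itself require this extension lemma; one only needs the image of $G_c$ to be contained in that of $G_c\cap\Fix_G(\mathcal T)$.)
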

\begin{proof}
Let $G = \mathcal U(\mathbf F)$. We have $G^\ddagger \leq G^\dagger \leq G^+$. Hence, it suffices to show that $G^+ \leq G^\ddagger$. 
In other words, given a chamber $c$ in $\mathcal B$, we must show that $G_c \leq G^\ddagger$. 

Let  $\mathcal R$ be a maximal spherical residue containing $c$. 
By Lemma~\ref{lem:irred-Coxeter}, there exists a tree-wall $\mathcal T$ such that $\proj_{\mathcal R}(\mathcal T) = \{c\}$. By definition we have $\Fix_G(\mathcal T) \leq G^\ddagger$. 

Let $J$ be the type of $\mathcal R$. For each $j \in J$, let $\sigma_j$ be the $j$-panel containing $c$. Fix $g \in G_c$. For each $j$, we invoke \cite[Proposition~3.8]{BDM21}. This  affords an element $g_j \in G$ such that $g|_{\sigma_j} = (g_j)|_{\sigma_j} $, which moreover fixes all chambers $d$ with $\proj_{\sigma_j}(d) = c$. In particular, $g_j$ fixes pointwise $\mathcal T$, hence $g_j \in G^\ddagger$. Moreover, $g_j$ acts trivially on $\sigma_k$ for all $k \in J \setminus\{j\}$. In particular, the product $\prod_{j \in J} g_j$, taken in an arbitrary order, is an element of $G^\ddagger$ that stabilizes $\sigma_j$ and acts on $\sigma_j$ like $g|_{\sigma_j}$, for all $j \in J$. Since $\mathcal R$ has a canonical decomposition as the direct product $\prod_{j \in J} \sigma_j$, the restriction $g|_{\mathcal R}$  is determined by the tuple $(g|_{\sigma_j})_{j\in J}$. We infer that the elements $g \in G_c $ and $\prod_{j \in J} g_j \in G_c \cap \Fix_G(\mathcal T)$ have the same action on the set of chambers of  $\mathcal R$. 

This shows that the groups  $G_c $ and $G_c \cap \Fix_G(\mathcal T)$, which are both contained in $\Stab_G(\mathcal R)$,  have the same image under the canonical projection $\Stab_G(\mathcal R) \to \Stab_G(\mathcal R)/\Fix_G(\mathcal R)$. Since $\Fix_G(\mathcal R) \leq G^\ddagger$ by  Proposition~\ref{prop:ddagger},  it follows that $G_c \leq G^\ddagger$, as required. 
\end{proof}

The following theorem is now immediate. 

\begin{thm}\label{thm:simplicity}
Assume that $\Gamma$ is irreducible with more than one vertex,   and that not all permutation groups $F_s \leq \Sym(\Omega_s)$ act freely on $\Omega_s$. 
Then the group $\mathcal U(\mathbf F)^+$ is simple. Moreover, every non-trivial normal subgroup of $\mathcal U(\mathbf F)$ contains $\mathcal U(\mathbf F)^+$. 
\end{thm}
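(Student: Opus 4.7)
The plan is to observe that the theorem is an immediate synthesis of the three preceding propositions in this subsection. Under the stated hypotheses (irreducible type and not all $F_s$ free), Proposition~\ref{prop:simplicity} tells us that $\mathcal U(\mathbf F)$ is almost simple with monolith $\mathcal U(\mathbf F)^\ddagger$; by definition of almost simplicity, this means $\mathcal U(\mathbf F)^\ddagger$ is a non-abelian simple group and is contained in every non-trivial normal subgroup of $\mathcal U(\mathbf F)$.

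Next, I would invoke the chain of identifications of the three distinguished normal subgroups of $\mathcal U(\mathbf F)$. Proposition~\ref{prop:ddagger} gives the (unconditional) equality $\mathcal U(\mathbf F)^\dagger = \mathcal U(\mathbf F)^\ddagger$, and Proposition~\ref{prop:+-equal-ddagger}, whose hypothesis (irreducibility of $\Gamma$) is part of the assumption that $\mathcal B$ is of irreducible type, upgrades this to
\[
\mathcal U(\mathbf F)^+ \;=\; \mathcal U(\mathbf F)^\dagger \;=\; \mathcal U(\mathbf F)^\ddagger .
\]

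Combining these two steps yields both conclusions simultaneously: $\mathcal U(\mathbf F)^+$ equals the monolith and is therefore simple, and every non-trivial normal subgroup of $\mathcal U(\mathbf F)$ contains the monolith, which is $\mathcal U(\mathbf F)^+$. There is no real obstacle here; all the substantive work (the simplicity of the monolith in the almost simple case, the reduction from pointwise stabilizers of maximal spherical residues to pointwise stabilizers of tree-walls via an induction on rank, and the existence of strongly separated tree-walls via \cite[Prop.~5.1]{CapSag} to collapse $\mathcal U(\mathbf F)^+$ to $\mathcal U(\mathbf F)^\dagger$) has already been carried out. The proof therefore amounts to citing Propositions~\ref{prop:simplicity}, \ref{prop:ddagger}, and \ref{prop:+-equal-ddagger} in that order.
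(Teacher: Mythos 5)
Your proposal is correct and matches the paper's proof, which simply observes that in view of Proposition~\ref{prop:+-equal-ddagger} the theorem is a reformulation of Proposition~\ref{prop:simplicity}. Your additional citation of Proposition~\ref{prop:ddagger} is harmless but redundant, since Proposition~\ref{prop:+-equal-ddagger} already states the equality $\mathcal U(\mathbf F)^+ = \mathcal U(\mathbf F)^\ddagger$ directly.
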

\begin{proof}
In view of Proposition~\ref{prop:+-equal-ddagger}, this is a reformulation of Proposition~\ref{prop:simplicity}.
\end{proof}
	
\begin{remark}
Theorem~\ref{thm:simplicity} can be rephrased by saying that the group $\mathcal U(\mathbf F)$ is monolithic, with monolith equal to $\mathcal U(\mathbf F)^+$.  Moreover, if $\mathcal U(\mathbf F)$ is endowed with the permutation topology associated with its action on the set of chambers, it follows that the monolith $\mathcal U(\mathbf F)^+$ is open. In other words every non-trivial normal subgroup of $\mathcal U(\mathbf F)$ is open. This strengthens \cite[Proposition~7.23]{BDM23}. 
\end{remark}

\subsection{\texorpdfstring{On the quotient $\mathcal U(\mathbf F)/\mathcal U(\mathbf F)^+$}{On the quotient U(F)/U(F)+}}

Our next goal is to describe the quotient group $\mathcal U(\mathbf F)/\mathcal U(\mathbf F)^+$. This requires to introduce additional notation. For each $s \in S$, we denote by $F_s^+$ the subgroup of the permutation group $F_s \leq \Sym(X_s)$ generated by all point stabilizers. It is obviously a normal subgroup. We view the quotient group $F_s/F_s^+$ as a permutation group on the set $X_s^0 = F_s^+\backslash X_s$ of $F_s^+$-orbits. 
    Let $S' := \{ s \in S \mid |X_s^0| > 1 \}$, let $\Gamma'$ be the Coxeter subsystem of $\Gamma$ induced by $S'$, and let $\mathcal B'$ be the semi-regular building of type $\Gamma'$ with color sets $X_s^0$ for each $s \in S'$.
We define $\mathcal U(\mathbf F/\mathbf F^+)$ as the universal group of automorphisms of $\mathcal B'$ with local action prescribed by $\mathbf F/\mathbf F^+ = (F_s/F_s^+)_{s \in S'}$. 

The main result of this section asserts that the groups $\mathcal U(\mathbf F)/\mathcal U(\mathbf F)^+$ and $\mathcal U(\mathbf F/\mathbf F^+)$ are isomorphic, see~\cref{thm:U(F/F+)} below. 
The proof relies on  the concept of \emph{imploding} a right-angled building, introduced in  \cite[\S 5]{BDM21}, that we now recall.

\begin{definition}
    Let $\mathcal B$ be a semi-regular right-angled building of type $\Gamma$, with color sets $X_s$ for each $s \in S$. Assume that for each $s \in S$, we have an equivalence relation $\equiv_s$ on $X_s$ and let $Y_s$ be the corresponding set of equivalence classes; for each $x \in X_s$, we write $[x] \in Y_s$ for the corresponding equivalence class.
    Let $S' := \{ s \in S \mid |Y_s| > 1 \}$, let $\Gamma'$ be the Coxeter subsystem of $\Gamma$ induced by $S'$, and let $\mathcal B'$ be the semi-regular building of type $\Gamma'$ with color sets $Y_s$ for each $s \in S'$ and fix a legal coloring $\lambda' \colon \mathcal B' \to \prod_{s \in S'} Y_s$.
    
    Choose a base chamber $c_0$ in $\mathcal B$ and a base chamber $c_0'$ in $\mathcal B'$ such that  $\lambda'_s(c_0') = [\lambda_s(c_0)]$ for all $s$.
    By \cite[Proposition 5.1]{BDM21}, there is a non-expansive surjective map $\tau \colon \mathcal B \to \mathcal B'$ that is color-preserving, in the sense that $\lambda'_s(\tau(c)) = [\lambda_s(c)]$ for all chambers $c \in \mathcal B$ and all $s \in S'$. Moreover, by \cite[Remark 5.4]{BDM21}, the map $\tau$ is the unique such map if one imposes in addition that $\tau(c_0) = c'_0$. 
    The map $\tau$ is called an \textbf{implosion} of $\mathcal B$ with respect to the given equivalence relations. That construction will  be generalized in Proposition~\ref{prop:implosion} below.
\end{definition}

We will apply this specifically to the situation described above, where the equivalence classes on $X_s$ are the orbits of $F_s^+$, for each $s \in S$. Notice that $|Y_s| = 1$ precisely if $F_s^+$ acts transitively on $X_s$. We continue to denote the imploded building by $\mathcal B^0$.

\begin{prop}\label{prop:G+-orbits}
    Let $c,d \in \mathcal B$ be two chambers and set $G =\mathcal U(\mathbf F)$. Then $\tau(c) = \tau(d)$ if and only if $c$ and $d$ are in the same $G^+$-orbit.
\end{prop}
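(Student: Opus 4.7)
The plan is to prove the two implications separately. Write $G := \mathcal{U}(\mathbf{F})$ and $G^+ := \mathcal{U}(\mathbf{F})^+$.

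For $(\Leftarrow)$, I would show that $\tau(gc) = \tau(c)$ for every $g \in G_e$ (with $e$ a fixed chamber) and every $c \in \mathcal{B}$; since $G^+$ is generated by chamber stabilizers, this suffices. The argument will be an induction on the gallery distance $d(e,c)$, with the base case $c = e$ immediate. In the inductive step, pick $c \sim_s c'$ with $d(e,c') < d(e,c)$, and let $\mathcal{P}$ be the $s$-panel through $c$ and $c'$. The inductive hypothesis $\tau(gc') = \tau(c')$ forces the local action $\sigma := \sigma_\lambda(g, \mathcal{P}) \in F_s$ to stabilize, as a set, the $F_s^+$-orbit of $\lambda_s(c')$. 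A short block-stabilizer computation — using only that $F_s^+$ contains every point stabilizer in $F_s$ — shows that the $F_s$-stabilizer of every $F_s^+$-orbit on $\Omega_s$ equals $F_s^+$; equivalently, $F_s/F_s^+$ acts freely on $Y_s$. Hence $\sigma \in F_s^+$, so $\sigma$ stabilizes every $F_s^+$-orbit, including that of $\lambda_s(c)$, and $\tau(gc) = \tau(c)$.

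For $(\Rightarrow)$, the plan rests on the following \emph{single-panel claim}: if $c \sim_s c'$ and $\tau(c) = \tau(c')$, then $c' \in G^+ \cdot c$. The color-preserving property of $\tau$ yields $\sigma \in F_s^+$ with $\sigma(\lambda_s(c)) = \lambda_s(c')$; writing $\sigma = \sigma_1 \cdots \sigma_k$ as a product of generators $\sigma_i \in \Stab_{F_s}(x_i)$, each factor $\sigma_i$ can be realized as the local action at $\mathcal{P}$ of some $g_i \in G_{c^{(i)}} \subseteq G^+$ fixing the chamber $c^{(i)} \in \mathcal{P}$ of color $x_i$, using \cite[Prop.~3.8]{BDM21}. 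The product $g = g_1 \cdots g_k \in G^+$ then stabilizes $\mathcal{P}$ and realizes $\sigma$ there, so $gc = c'$.

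To pass from the single-panel case to the general case, I would show that the fiber $\tau^{-1}(\tau(c))$ is convex in $\mathcal{B}$, so that a minimal gallery from $c$ to $d$ consists entirely of intra-fiber adjacencies; iterating the single-panel case along such a gallery yields the desired $g \in G^+$ with $gc = d$. This convexity step is the \textbf{main obstacle}. The strategy is to take a minimal gallery $c = c_0, c_1, \dots, c_n = d$ and observe that, after deleting the indices $i$ with $\tau(c_{i-1}) = \tau(c_i)$, the resulting word $w'$ in the generators of $W(\Gamma')$ is the trivial element, because the corresponding gallery in $\mathcal{B}^0$ is closed. By Tits' solution to the word problem in the right-angled Coxeter group $W(\Gamma')$, $w'$ reduces to the empty word through elementary moves (cancellation of back-and-forth $ss$-pairs and commutation of adjacent $st$-letters with $m_{st} = 2$). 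Each such elementary move can be lifted to a modification of the gallery in $\mathcal{B}$, with the discrepancy absorbed into a $G^+$-correction supplied by the single-panel case; after finitely many rewritings, the gallery lies entirely in the fiber, and the proof concludes.
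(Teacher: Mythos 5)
Your ``if'' direction is correct and is essentially the argument of the paper: reduce to a single chamber stabilizer, induct on $\dist(e,c)$, and force the local action $\sigma$ into $F_s^+$ --- your block-stabilizer observation is exactly Lemma~\ref{lem:H/H+}, and it serves as a self-contained substitute for the citation of \cite[Lemma~3.16]{BDM21} used in the text (the final inference $\tau(gc)=\tau(c)$ still needs the small remark that $\tau(\mathcal P)$ and $\tau(g\mathcal P)$ lie in a common panel of $\mathcal B^0$, since they share the chamber $\tau(c')=\tau(gc')$, but that is routine). Likewise your single-panel claim in the ``only if'' direction is correct and matches the paper's use of \cite[Prop.~3.8]{BDM21}.

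The gap is the passage from the single-panel case to the general case. The fiber $\tau^{-1}(\tau(c))$ is \emph{not} convex, so the announced strategy cannot succeed, even in the principal case $\mathbf F=\mathbf Y$. Concretely, let $\Gamma$ consist of two non-adjacent vertices $s,t$, let $F_s=F_t$ be the infinite dihedral group acting on $\mathbf Z$, so that the orbits of $F_u^+$ are the even and the odd integers, and identify the chambers of $\mathcal B$ with the elements of $X_s \ast X_t$ (with $X_s=\langle a\rangle$, $X_t=\langle b\rangle$ infinite cyclic), coloured as in Lemma~\ref{lem:legal-colouring}. Computing $\tau$ along the gallery $(1,\,a,\,ab^2,\,ab^2a^{-1})$ shows that $c=1$ and $d=ab^2a^{-1}$ satisfy $\tau(c)=\tau(d)$; but this gallery is the \emph{unique} minimal gallery from $c$ to $d$, and its two interior chambers have $\tau$-image distinct from $\tau(c)$. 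The fallback word-rewriting plan does not repair this: the assertion that the stammer-deleted word $w'$ represents the identity of $W(\Gamma')$ ``because the corresponding gallery in $\mathcal B^0$ is closed'' is valid only when $\mathcal B^0$ is thin. For general $\mathbf F$ some $F_s^+$ may have three or more orbits, $\mathcal B^0$ is then thick, and a closed non-stammering gallery may have type $ttt$, circling a single thick panel; such configurations do arise as images of minimal galleries of type $(t,s,t,s,t)$ whose $s$-steps stammer. All one can conclude is that $w'$ is empty or non-reduced, and the proposed lifting of Tits moves with $G^+$-corrections is left unsubstantiated.

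The clean route --- the one taken in the proof above --- is to induct on the length $n$ of a minimal gallery $\gamma$ from $c$ to $d$ and never delete letters. If some consecutive pair satisfies $\tau(c_{i-1})=\tau(c_i)$, the single-panel claim yields $g\in G^+$ with $g.c_{i-1}=c_i$; then $(g.c_0,\dots,g.c_{i-1}=c_i,\dots,c_n)$ is a gallery of length $n-1$ from $g.c$ to $d$, and $\tau(g.c)=\tau(c)=\tau(d)$ by the already-proved ``if'' direction, so the induction hypothesis applies and $c\in G^+.d$. If no consecutive pair has equal images, then $\tau(\gamma)$ is a non-stammering gallery whose type is the \emph{full} word $s_1\cdots s_n$, which is reduced because $\gamma$ is minimal; hence $\tau(\gamma)$ is a minimal gallery, contradicting $\tau(c)=\tau(d)$ unless $n=0$. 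This preserves the reduced-word property throughout and avoids both the convexity issue and the thickness issue.
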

\begin{proof}
    Assume first that $c$ and $d$ are in the same $G^+$-orbit.
    As in the proof of \cite[Proposition 5.3]{BDM21}, we may assume that $d = g.c$ for some $g \in G$ stabilizing some chamber $c_0$ and proceed by induction on $\dist(c_0, c)$, the result being obvious when $c = c_0$.
    (Notice that in \cite[Proposition 5.3]{BDM21}, the imploded building is given by the orbits of $F_s$ rather than $F_s^+$, so we cannot directly quote that result.)
    
    So assume that $\dist(c_0, c) = n$ and let $e$ be a chamber $s$-adjacent to $c$ at distance $n-1$ from $c_0$, for some $s \in S$. By the induction hypothesis, we have $\tau(e) = \tau(g.e)$.

    If the $s$-colors of $c$ and $e$ lie in the same $F_s^+$-orbit, then the same is true for $g.c$ and $g.e$, since  the local action of $g$ on each $s$-panel is contained in $F_s^+$    by \cite[Lemma 3.16]{BDM21}. 
    By the definition of $\tau$, we then have $\tau(c) = \tau(e)$ and $\tau(g.c) = \tau(g.e)$.
    Combined with $\tau(e) = \tau(g.e)$, we conclude that $\tau(c) = \tau(g.c) = \tau(d)$ as claimed.
    
    On the other hand, if the $s$-colors of $c$ and $e$ lie in different $F_s^+$-orbits, then again by the definition of $\tau$, the chambers $\tau(c)$ and $\tau(e)$ are $s$-adjacent, and the $s$-color of $\tau(c)$ is $\lambda'_s(\tau(c)) = [\lambda_s(c)]$. Similarly, the chambers $\tau(g.c)$ and $\tau(g.e)$ are $s$-adjacent, and the $s$-color of $\tau(g.c)$ is $\lambda'_s(\tau(g.c)) = [\lambda_s(g.c)]$.
    Since the local action of $g$ on the $s$-panel through $c$ is contained in $F_s^+$, we have $[\lambda_s(g.c)] = [\lambda_s(c)]$.
    Combined with $\tau(e) = \tau(g.e)$, we conclude again that $\tau(c) = \tau(g.c) = \tau(d)$ as claimed.

    \medskip
        
    Conversely, assume that $\tau(c) = \tau(d)$ and consider a minimal gallery $\gamma$ from $c$ to~$d$,
    \[ c = c_0 \sim \dots \sim c_n = d, \]
    of type $s_1 s_2 \dotsm s_n$.
    Again, we proceed by induction on $n = \dist(c,d)$; there is nothing to prove if $n=0$. Suppose now that $n > 0$.
    
    Assume first that for some adjacent chambers $c_{i-1}$ and $c_i$ in this gallery, we have $\tau(c_{i-1}) = \tau(c_i)$.
    Then by definition of $\tau$, the chambers $c_{i-1}$ and $c_i$ are harmonious (w.r.t.\@~$\mathbf F^+$), i.e., their $s_i$-colors lie in a common $F_{s_i}^+$-orbit.
    We can hence find elements $g_1,\dots,g_\ell \in G$, each fixing some chamber in the same $s_i$-panel as $c_{i-1}$ and $c_i$, such that $g = g_1 \dotsm g_\ell$ maps $c_{i-1}$ to~$c_i$; in particular, $g \in G^+$.
    Now notice that we can form a shorter gallery
    \[ g.c = g.c_0 \sim \dots \sim g.c_{i-1} = c_i \sim \dots \sim c_n = d \]
    from $g.c$ to $d$ of length $n-1$, with $\tau(d) = \tau(c) = \tau(g.c)$ by the first part of the proof.
    By the induction hypothesis, $d$ and $g.c$ are in the same $G^+$-orbit, and since $g \in G^+$, we conclude that $c$ and $d$ are in the same $G^+$-orbit as well.
    
    We may hence assume that the gallery $\tau(\gamma)$ is non-stammering.
    In particular, each of the $s_i$ lies in $S'$, so that $c$ and $d$ lie in a common $S'$-residue of $\mathcal B$. Since the gallery $\gamma$ is minimal and since residues are convex, we infer that  $s_1 s_2 \dotsm s_n$ is a reduced word w.r.t.~$\Gamma'$, and hence $\tau(\gamma)$ is a minimal gallery from $\tau(c)$ to $\tau(d)$ (see \cite[Lemma~5.16]{AB08}).
    However, $\tau(\gamma)$ is a loop since $\tau(c) = \tau(d)$, so we get a contradiction.
\end{proof}

The following elementary observation will be used twice in the proof of \cref{thm:U(F/F+)}.
\begin{lem}\label{lem:H/H+}
    Let $H \leq \Sym(\Omega)$ be a permutation group and let $H^+$ be the normal subgroup generated by all point stabilizers.
    Consider the induced action of $H$ on the set $\Omega^0$ consisting of all $H^+$-orbits.
    If $g \in H$ fixes an element of $\Omega^0$, then $g \in H^+$. In particular, $H/H^+$ acts freely on $\Omega^0$.
\end{lem}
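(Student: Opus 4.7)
The plan is to unwind the definitions: an element of $\Omega^0$ is by construction an $H^+$-orbit on $\Omega$, and a point stabilizer of the $H$-action on $\Omega^0$ should, by a one-line orbit chase, be forced to lie inside the normal subgroup $H^+$ generated by the point stabilizers on $\Omega$ itself.

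Concretely, I would begin by fixing an element of $\Omega^0$ that is fixed by $g$, that is, an $H^+$-orbit $\mathcal O \subseteq \Omega$ with $g \mathcal O = \mathcal O$, and then choosing any representative $\omega \in \mathcal O$. Since $g\omega$ lies in the same $H^+$-orbit as $\omega$, there exists $h \in H^+$ with $g \omega = h \omega$, so that $h^{-1} g$ belongs to the stabilizer $\Stab_H(\omega)$. By the very definition of $H^+$, every point stabilizer of the $H$-action on $\Omega$ is contained in $H^+$; in particular $h^{-1} g \in H^+$, and hence $g = h (h^{-1} g) \in H^+$.

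For the final sentence, I would simply observe that an element of $H/H^+$ fixing a point of $\Omega^0$ in the induced action admits a lift $g \in H$ fixing the same point, and the first part of the lemma forces $g \in H^+$, so that the element of $H/H^+$ is trivial. Thus point stabilizers in the $H/H^+$-action are trivial, i.e.\ the action is free.

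I do not anticipate any genuine obstacle here; the only subtlety is to remember that ``$H^+$ generated by all point stabilizers'' means we may immediately absorb $\Stab_H(\omega)$ into $H^+$ without any further argument, and that fixing an $H^+$-orbit setwise is the correct notion of ``fixing an element of $\Omega^0$'' in the induced action.
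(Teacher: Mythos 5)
Your argument is correct and coincides with the paper's own proof: both fix a representative $\omega$ of the stabilized $H^+$-orbit, write $g\omega = h\omega$ with $h \in H^+$, and conclude $h^{-1}g \in \Stab_H(\omega) \leq H^+$, hence $g \in H^+$. The deduction of freeness of the $H/H^+$-action is the same routine lifting observation, so there is nothing to add.
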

\begin{proof}
    Assume that $g \in H$ fixes an element of $\Omega^0$, i.e., stabilizes some $H^+$-orbit $H^+.x$ for some $x \in \Omega$.
    Then $x$ and $g.x$ lie in the same $H^+$-orbit, so $g.x = h.x$ for some $h \in H^+$.
Then $h^{-1}g$  fixes $x$, so $h^{-1}g \in H^+$ by the definition of $H^+$, and hence $g \in H^+$.
\end{proof}

We are now ready to state and prove the main result of this section. For a closely related result in the context of groups acting on trees, we refer to \cite[Theorem~4.12]{RST}. 

\begin{thm}\label{thm:U(F/F+)}
    The map $\psi \colon \mathcal U(\mathbf F) \to \mathcal U(\mathbf F/\mathbf F^+)$ given by $\psi(g).\tau(c) = \tau(g.c)$ for all $g \in \mathcal U(\mathbf F)$ and all $c \in \mathcal B$ is a group epimorphism, with kernel $\mathcal U(\mathbf F)^+$.
    In particular, $\mathcal U(\mathbf F)/\mathcal U(\mathbf F)^+ \cong \mathcal U(\mathbf F/\mathbf F^+)$.
\end{thm}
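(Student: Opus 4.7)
The plan is to verify in turn that $\psi$ is well-defined on $\mathcal B^0$, that it is a group homomorphism landing in $\mathcal U(\mathbf F/\mathbf F^+)$, that its kernel equals $\mathcal U(\mathbf F)^+$, and finally that it is surjective; the main obstacle will be the last step. Writing $G = \mathcal U(\mathbf F)$ and $G^+ = \mathcal U(\mathbf F)^+$, I would first check well-definedness: if $\tau(c) = \tau(c')$, Proposition~\ref{prop:G+-orbits} supplies $h \in G^+$ with $c' = h.c$; since $G^+$ is normal in $G$, one has $g.c' = (ghg^{-1}).(g.c)$ with $ghg^{-1} \in G^+$, and a second application of Proposition~\ref{prop:G+-orbits} gives $\tau(g.c') = \tau(g.c)$. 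Thus $\psi(g)$ is a well-defined bijection of $\mathcal B^0$ with inverse $\psi(g^{-1})$, and the identity $\psi(g_1 g_2).\tau(c) = \tau((g_1 g_2).c) = \psi(g_1).\tau(g_2.c) = \psi(g_1)\psi(g_2).\tau(c)$ shows that $\psi$ is a homomorphism. To see that $\psi(g) \in \mathcal U(\mathbf F/\mathbf F^+)$, I would observe that for each $s \in S'$ the implosion $\tau$ restricts, via the legal colourings, to the quotient map $X_s \to X_s^0 = F_s^+\backslash X_s$ on every $s$-panel; consequently the local action of $\psi(g)$ at an $s$-panel $\tau(\mathcal P)$ is the image in $F_s/F_s^+$ of the local action of $g$ at $\mathcal P$, and hence lies in $F_s/F_s^+$.

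Next I would compute the kernel. The inclusion $G^+ \subseteq \ker\psi$ is immediate from Proposition~\ref{prop:G+-orbits}. Conversely, assuming $\psi(g) = \id$, I fix a chamber $c_0$; then $\tau(g.c_0) = \tau(c_0)$ yields, via Proposition~\ref{prop:G+-orbits}, some $h \in G^+$ with $g.c_0 = h.c_0$. The element $h^{-1}g$ then stabilizes $c_0$ and therefore belongs to $G^+$ by the very definition of $G^+$, whence $g \in G^+$.

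The crucial step is surjectivity, for which the key ingredient is Lemma~\ref{lem:H/H+}: it ensures that each quotient $F_s/F_s^+$ acts freely on $X_s^0$, so by the free-action criterion \cite[Proposition~3.14]{BDM21} the universal group $\mathcal U(\mathbf F/\mathbf F^+)$ acts freely on the chambers of $\mathcal B^0$. In particular, any two of its elements that agree on a single chamber must coincide. Fixing a base chamber $c_0 \in \mathcal B$, setting $c_0' = \tau(c_0)$, and taking an arbitrary $g' \in \mathcal U(\mathbf F/\mathbf F^+)$, I would invoke the surjectivity of $\tau$ (cf.~\cite[Proposition~5.1]{BDM21}) to pick $d_0 \in \mathcal B$ with $\tau(d_0) = g'.c_0'$, and then use the chamber-transitivity of $G$ on $\mathcal B$ (a consequence of the transitivity of each $F_s$ on $X_s$) to find $g \in G$ with $g.c_0 = d_0$; the equality $\psi(g).c_0' = \tau(g.c_0) = g'.c_0'$, together with the freeness of the $\mathcal U(\mathbf F/\mathbf F^+)$-action on chambers, then forces $\psi(g) = g'$. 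The heart of the matter is that passing from $\mathbf F$ to $\mathbf F/\mathbf F^+$ is designed, via Lemma~\ref{lem:H/H+}, precisely to make the target universal group act freely on chambers, collapsing the surjectivity check to this single orbit computation.
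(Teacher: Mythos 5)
Your treatment of well-definedness, the homomorphism property, the identification of local actions of $\psi(g)$ with images in $F_s/F_s^+$, and the computation of the kernel is correct and essentially the same as the paper's argument (your kernel step is just \cref{lem:H/H+} unpacked for the action of $G$ on chambers). The genuine problem lies in your surjectivity step: you invoke ``the chamber-transitivity of $G$ on $\mathcal B$ (a consequence of the transitivity of each $F_s$ on $X_s$)'', but transitivity of the local groups is nowhere assumed in \cref{thm:U(F/F+)}. The section fixes arbitrary permutation groups $F_s \leq \Sym(\Omega_s)$; only the colouring groups $X_s$ act regularly. If some $F_s$ is intransitive, then $\mathcal U(\mathbf F)$ is not chamber-transitive on $\mathcal B$, and your choice of $g \in G$ with $g.c_0 = d_0$ is impossible in general, so the argument breaks exactly at that point. (In the paper's applications $F_s$ is the transitive infinite dihedral group, so your shortcut would work there, but not in the generality in which the theorem is stated.)

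What is missing is an orbit comparison replacing transitivity: one must show that $\psi(\mathcal U(\mathbf F))$ and $\mathcal U(\mathbf F/\mathbf F^+)$ have identical orbits on the chambers of $\mathcal B^0$. This is how the paper argues: by \cite[Proposition~3.1]{BDM21}, two chambers lie in the same orbit of a universal group if and only if their $s$-colours lie in the same orbit of the local group for every $s$; since $\lambda'_s(\tau(c)) = [\lambda_s(c)]$ and the $F_s/F_s^+$-orbits on $X_s^0$ correspond to the $F_s$-orbits on $X_s$, any chamber in the $\mathcal U(\mathbf F/\mathbf F^+)$-orbit of $c_0' = \tau(c_0)$ is the image under $\tau$ of a chamber in the $\mathcal U(\mathbf F)$-orbit of $c_0$. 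With that in hand, your concluding freeness argument is fine (Lemma~\ref{lem:H/H+} shows each $F_s/F_s^+$ acts freely on $X_s^0$, so $\mathcal U(\mathbf F/\mathbf F^+)$ has trivial chamber stabilizers, which the paper obtains instead via \cite[Lemma~3.16]{BDM21}), and it forces $\psi(g) = g'$. So your proof becomes complete once the transitivity assumption is replaced by this colour-based orbit comparison.
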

\begin{proof}
    As before, let $G = \mathcal U(\mathbf F)$ and $G^+ = \mathcal U(\mathbf F)^+ \unlhd G$.
    First observe that by Proposition~\ref{prop:G+-orbits}, $\psi$ is well-defined, i.e., if $\tau(c) = \tau(d)$ and $g \in G$, then $\tau(g.c) = \tau(g.d)$.
    Moreover, $\psi(g)$ preserves adjacency in~$\mathcal B^0$: if $c' \sim_s d'$ (for some $s \in S'$), then we can find representatives $c$ and $d$ in $\mathcal B$ with $\tau(c) = c'$ and $\tau(d) = d'$ such that $c \sim_s d$; then $g.c \sim_s g.d$, but certainly $\tau(g.c) \neq \tau(g.d)$ so we must have $\tau(g.c) \sim_s \tau(g.d)$.
    It is then obvious that $\psi$ is a group homomorphism from $G$ to $\Aut(\mathcal B^0)$.
    
    Next, we claim that $\psi(g) \in \mathcal U(\mathbf F/\mathbf F^+)$. So let $\mathcal P'$ be an arbitrary $s$-panel in $\mathcal B^0$ (for $s \in S')$. Then the local action of $\psi(g)$ on $\mathcal P'$ is the permutation of $Y_s$ induced by $\psi(g)_{|\mathcal P'} \colon \mathcal P' \to \psi(g).\mathcal P'$.
    On the other hand, we can lift $\mathcal P'$ to an $s$-panel $\mathcal P$ in $\mathcal B$ and by assumption, the local action $\sigma$ of $g$ on $\mathcal P$ is contained in $F_s$. In particular, $\sigma$ permutes the $F_s^+$-orbits, and the induced action on these orbits is precisely the image of $\sigma$ in $F_s/F_s^+$. By definition of~$\psi$, this is also the local action of $\psi(g)$ on~$\mathcal P'$, proving our claim.
    
    We now show that $\psi$ is surjective.
    First, we claim that the image $\psi(\mathcal U(\mathbf F))$ and $\mathcal U(\mathbf F/\mathbf F^+)$ have identical orbits on $\mathcal B^0$. Indeed, let $c' = \tau(c)$ and $d' = \tau(d)$ be two chambers in the same orbit of $\mathcal U(\mathbf F/\mathbf F^+)$. Then by \cite[Proposition 3.1]{BDM21},  their colors $\lambda'_s(c')$ and $\lambda'_s(d')$ lie in the same orbit of $F_s/F_s^+$, for all $s \in S'$.
    By definition of $\tau$, however, we have $\lambda'_s(c') = [\lambda_s(c)]$ and $\lambda'_s(d') = [\lambda_s(d)]$, hence the colors $\lambda_s(c)$ and $\lambda_s(d)$ lie in the same orbit of $F_s$, for all $s \in S'$. This obviously also holds for $s \in S \setminus S'$.
    By \cite[Proposition 3.1]{BDM21} again, $c$ and $d$ lie in the same orbit of $\mathcal U(\mathbf F)$, and we conclude that $c' = \tau(c)$ and $d' = \tau(d)$ lie in the same orbit of $\psi(\mathcal U(\mathbf F))$. This proves our claim.
    
    Next, we claim that the for any chamber $c'$ in $\mathcal B^0$, the chamber stabilizer $\mathcal U(\mathbf F/\mathbf F^+)_{c'}$ is trivial, so in particular, $\psi(\mathcal U(\mathbf F))_{c'}$ and $\mathcal U(\mathbf F/\mathbf F^+)_{c'}$ coincide.
    Indeed, by \cite[Lemma 3.16]{BDM21}, the local actions of $\mathcal U(\mathbf F/\mathbf F^+)_{c'}$ are contained in $(F_s/F_s^+)^+$, but by \cref{lem:H/H+}, this group is trivial. An element fixing a chamber and having trivial local actions must be trivial, proving our claim.
    Combined with the previous paragraph, this proves that $\psi$ is indeed surjective.
    
    Finally, the kernel of $\psi$ consists precisely of those $g \in G$ such that $\tau(g.c) = \tau(c)$ for all $c \in \mathcal B$.
    By \cref{prop:G+-orbits}, this holds if and only if $c$ and $g.c$ lie in the same $G^+$-orbit, for all $c \in \mathcal B$.
    Of course, if $g \in G^+$, then this holds; hence $G^+ \leq \ker \psi$.
    Conversely, assume that $g \in G$ stabilizes the $G^+$-orbit of $c$; then by \cref{lem:H/H+} again, $g \in G^+$. This shows that $\ker \psi = G^+$, finishing the proof.
\end{proof}

\section{Locally compact groups quasi-isometric to RAAGs}

\subsection{Blown-up buildings}\label{sec:blow-up}

We recall that two residues  $\mathcal R$ and $\mathcal R'$ in a building are called \textbf{parallel} if the projection maps $\proj_{\mathcal R}$ and $\proj_{\mathcal R'}$ define reciprocal bijections between the respective chamber sets of  $\mathcal R$ and $\mathcal R'$. In a right-angled building, parallel residues have the same type (see \cite[Proposition~5.37]{AB08}). 

Let $\mathcal B$ be a right-angled building of type $\Gamma$. For each $s \in S$, let $\Omega_s$ be a set. 
Following Huang--Kleiner \cite[Definition~5.6]{HK18}, we define a \textbf{blow-up data}  on $\mathcal B$ as a collection  of maps   $\mathscr H = \{h_\mathcal R \colon \mathcal R \to \Omega_s\}$,  for each $s \in S$ and for each $s$-panel $\mathcal R$, satisfying the following compatibility condition: if two panels $\mathcal R$ and $\mathcal R'$ are parallel, then $h_{\mathcal R'} = h_{\mathcal R} \circ \proj_{\mathcal R}$.  
In case $\Omega_s = \mathbf Z$ for all $s$, we say that $\mathscr H$ is a  \textbf{$\mathbf Z$-blow-up data}. 


\begin{remark}
Although  \cite[Definition~5.6]{HK18} focuses on $\mathbf Z$-blow-up data, the possibility to replace $\mathbf Z$ by more general sets  is  pointed out in \cite[Remark~5.12]{HK18}. We remark that legal colourings 
of a regular right-angled building may be viewed as  collections of maps defined on each panel, and satisfying the compatibility condition stated above. From this point of view,  
legal colourings are blow-up data where each blow-up map $h_{\mathcal R}$ is a bijection.
%
\end{remark}

Given a $\mathbf Z$-blow-up data $\mathscr H = \{h_\mathcal R \colon \mathcal R \to \mathbf Z\}$ on $\mathcal B$, we define a graph structure on $\mathcal B$, denoted by $|\mathcal B|_{\mathscr H}$, by declaring that two chambers $c, d \in \mathcal B$ are adjacent in  $|\mathcal B|_{\mathscr H}$ if $c, d$ belong to a common panel $\mathcal R$, and if $|h_{\mathcal R}(c) - h_{\mathcal R}(d)| \leq 1$. The graph $|\mathcal B|_{\mathscr H}$ is called the \textbf{blown-up chamber graph} associated with $\mathscr H$. 

Observe that if each map in the blow-up data is constant, then $|\mathcal B|_{\mathscr H}$ is nothing but the chamber graph $\mathscr C(\mathcal B)$. The identity on $\mathcal B$ defines a natural map $|\mathcal B|_{\mathscr H} \to \mathscr C(\mathcal B)$ mapping edges to edges.  

Another example is obtained by  viewing the legal colouring of $\mathcal B$ from Lemma~\ref{lem:legal-colouring} as a blow-up data. The  associated  blown-up chamber graph is nothing but the Cayley graph of   $G(\Gamma)$ with respect to the standard generating set, which we denote by $X(\Gamma)^{(1)}$ since it coincides with the $1$-skeleton of the CAT(0) cube complex $X(\Gamma)$ defined as the universal cover of the Salvetti complex (see e.g. \cite{HK18}). 

Let   $\mathscr H = \{h_\mathcal R \colon \mathcal R \to \mathbf Z\}$ be a  $\mathbf Z$-blow-up data on $\mathcal B$ and let  $H$ be a group acting on $\mathscr C(\mathcal B)$ by automorphisms. Then $H$ maps panels to panels, see \S\ref{sec:chamber-graph}. Following \cite[\S5.6]{HK18}, we say that $\mathscr H$ is \textbf{$H$-equivariant} if for each panel $\mathcal R$ and each $g \in H$, there exists an isometry $\rho(g, \mathcal R) \in \Isom(\mathbf Z)$ such that
$$h_{g(\mathcal R)} \circ g|_\mathcal R = \rho(g, \mathcal R) \circ h_\mathcal R$$
and satisfying the equality 
$$\rho(g_1 g_2, \mathcal R) = \rho(g_1, g_2(\mathcal R)) \circ \rho(g_2, \mathcal R)$$
for all $g_1, g_2 \in H$ and all $\mathcal R$. 

Notice the similarity with the notion of the \emph{local action}: the isometry $\rho(g, \mathcal R)$ plays the role of the local action of $g$ at $\mathcal R$.  Notice in particular that  the map $g \mapsto \rho(g, \mathcal R)$ is a homomorphism of the stabilizer $H_\mathcal R$ to $\Isom(\mathbf Z)$.

This terminology allows us to obtain the following combinatorial formulation of  \cite[Theorems~6.2]{HK18}.

\begin{thm}[Huang--Kleiner]\label{thm:HK}
Let $\Gamma$ be a graph  satisfying (R1) and (R2), and $\mathcal B$ be the right-angled building of type $\Gamma$ associated with the RAAG $G(\Gamma)$.  Let also $H$ be a group quasi-isometric to $G(\Gamma)$.

Then $H$ has a cobounded action by automorphisms on the chamber-graph $\mathscr C(\mathcal B)$. 
Moreover, there is an $H$-equivariant $\mathbf Z$-blow-up data  $\mathscr H = \{h_\mathcal R \colon \mathcal R \to \mathbf Z\}$ on $\mathcal B$, consisting of surjective maps with finite fibers of uniformly bounded cardinality,  so that $H$ acts by automorphisms on the blown-up chamber graph $|\mathcal B|_{\mathscr H}$, which is connected and locally finite. Moreover, the $H$-action on $|\mathcal B|_{\mathscr H}$ is cobounded and metrically proper, the orbit maps are quasi-isometries, and the natural map $|\mathcal B|_{\mathscr H} \to \mathscr C(\mathcal B)$ is $H$-equivariant. 
\end{thm}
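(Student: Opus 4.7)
The theorem is presented as a combinatorial formulation of \cite[Theorem~6.2]{HK18}, so my plan is to translate that geometric quasi-isometric rigidity result into the language of chamber systems used here. The baseline observation is that the Cayley graph $X(\Gamma)^{(1)}$ coincides with a specific blown-up chamber graph of $\mathcal B$: setting $X_s = \langle x_s \rangle \cong \mathbf Z$ for each $s \in S$ and $\mathbf X = (X_s)_{s \in S}$, the graph product $G_\Gamma(\mathbf X)$ is identified with $G(\Gamma)$ and acts regularly on $\mathcal B$, and the legal colouring $\lambda$ from Lemma~\ref{lem:legal-colouring} restricts to a bijection on each panel. Hence $\lambda$ may itself be viewed as a $\mathbf Z$-blow-up data $\mathscr H_0$ whose associated blown-up chamber graph $|\mathcal B|_{\mathscr H_0}$ is canonically isomorphic to $X(\Gamma)^{(1)}$, with $G(\Gamma)$ acting by left multiplication.

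Next I would invoke \cite[Theorem~6.2]{HK18}: under (R1) and (R2), any group $H$ quasi-isometric to $G(\Gamma)$ admits a proper cobounded isometric action on a blow-up of the $\CAT$ cube complex $X(\Gamma)$, where each vertex is replaced by a set of uniformly bounded cardinality and the combinatorial structure is governed by a partition of each hyperplane into uniformly bounded fibers. The dictionary between the two frameworks is that hyperplanes in $X(\Gamma)$ correspond bijectively to parallelism classes of panels in $\mathcal B$: a hyperplane crosses exactly those edges of $X(\Gamma)^{(1)}$ which belong to a fixed parallelism class of $s$-panels, for some $s \in S$. Translating the Huang--Kleiner hyperplane fibers through this correspondence yields, for each panel $\mathcal R$, a surjection $h_\mathcal R \colon \mathcal R \to \mathbf Z$ with uniformly bounded finite fibers; the compatibility condition $h_{\mathcal R'} = h_\mathcal R \circ \proj_\mathcal R$ for parallel panels is forced by the hyperplane origin of the data, and the resulting $|\mathcal B|_{\mathscr H}$ is then, on the nose, the $1$-skeleton of the Huang--Kleiner blow-up.

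It remains to read off the listed properties. Connectedness and local finiteness are immediate from surjectivity and uniform boundedness of the fibers; properness, coboundedness, and the quasi-isometry assertion for orbit maps are direct translations of the corresponding properties of the Huang--Kleiner action; $H$-equivariance with cocycle $\rho(g, \mathcal R) \in \Isom(\mathbf Z)$ encodes that $H$ acts by cubical automorphisms on the blow-up; and the natural collapsing map $|\mathcal B|_{\mathscr H} \to \mathscr C(\mathcal B)$ is $H$-equivariant because collapsing the fibers of each $h_\mathcal R$ yields a well-defined action on chambers respecting panel adjacency. The main obstacle I anticipate is precisely this translation between Huang--Kleiner's geometric language (hyperplanes, carriers, cubical automorphisms) and the purely combinatorial setting of panels and the parallelism relation; once the dictionary identifying hyperplanes with parallelism classes of panels and hyperplane carriers with unions of parallel panels is firmly in place, all remaining verifications reduce to routine bookkeeping.
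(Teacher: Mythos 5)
Your overall strategy is the same as the paper's: the result is obtained there purely by citation of \cite{HK18} — the $H$-action on $\mathscr C(\mathcal B)$ from \cite[Theorem~1.3 and Lemma~3.15]{HK18}, the $H$-equivariant blow-up data from the proof of \cite[Theorem~6.2]{HK18}, the isometries $\rho(g,\mathcal R)$ from \cite[Lemma~5.14]{HK18} — with properness and coboundedness read off from the orbit maps being quasi-isometries. So in spirit you are doing exactly what the paper does.

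However, the dictionary on which you hang your translation is false as stated. A hyperplane of $X(\Gamma)$ dual to an edge $(g, g x_s)$ crosses exactly the $s$-edges $(h, h x_s)$ with $h \in g\langle x_t : t \in s^\perp\rangle$, i.e.\ exactly one edge in each $s$-panel parallel to $g\langle x_s\rangle$; it does not cross all edges lying in that parallelism class, and conversely a single parallelism class of $s$-panels is crossed by infinitely many hyperplanes (one for each ``level'' of the $\mathbf Z$-coordinate). Already for $\Gamma$ a single vertex, $X(\Gamma)^{(1)}$ is a line with infinitely many hyperplanes but only one panel, so there is no bijection between hyperplanes and parallelism classes; the correct correspondence is between a parallelism class (equivalently an $s$-tree-wall) and the whole family of hyperplanes dual to the $s$-edges in that direction, a single hyperplane corresponding to a pair (parallelism class, wall of $\mathbf Z$ determined by the maps $h_{\mathcal R}$). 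Consequently ``translating the hyperplane fibers through this correspondence'' is not a well-defined recipe for producing the maps $h_{\mathcal R}$. The good news is that no such translation is needed: Huang--Kleiner's notion of blow-up data (\cite[Definition~5.6]{HK18}, adopted verbatim in the paper) is already formulated as maps on panels compatible with parallelism, and their Theorem~6.2 together with Lemma~5.14 produces the equivariant data directly. Note also that the logical order is the reverse of what your sketch suggests: the action on the chamber graph $\mathscr C(\mathcal B)$ is established first (\cite[Theorem~1.3 and Lemma~3.15]{HK18}), and the blow-up data is then constructed to be equivariant for it, rather than the chamber-graph action being deduced a posteriori from a collapsing map.
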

\begin{proof}
The $H$-action on $\mathscr C(\mathcal B)$ is afforded by \cite[Theorem~1.3 and Lemma~3.15]{HK18}. The construction of the $H$-equivariant blow-up data  $\mathscr H = \{h_\mathcal R \colon \mathcal R \to \mathbf Z\}$ with the required properties is achieved in the proof of \cite[Theorem~6.2]{HK18}.  See, in particular, Lemma~\cite[Lemma~5.14]{HK18}  for the construction of the isometries $\rho(g, \mathcal R)$.  It follows that $H$ acts by automorphisms on the graph $|\mathcal B|_{\mathscr H}$. Moreover, the orbit maps of $H$ to $|\mathcal B|_{\mathscr H}$ are quasi-isometries. Hence the action is metrically proper and cobounded. 
\end{proof}


\subsection{Implosions}

Let $\mathcal B$ be a right-angled building of type $\Gamma$ 
 and  $\mathscr H = \{h_\mathcal R \colon \mathcal R \to \Omega'_s\}$ be a blow-up data. Let also $q'_s $ be the cardinality of $\Omega'_s$ and assume that $q'_s \geq 2$ for all $s$. (This assumption is not essential but simplifies the setup, and it will always be satisfied in our applications.)
 We let  $\mathcal B'$ be a semiregular building of type $\Gamma$ and of thickess $\mathbf q' =(q'_s)_{s \in S}$, and $\lambda' \colon \mathcal B' \to \prod_{s \in S} \Omega'_s$ be a legal colouring.

The following may be viewed as a generalization of \cite[Proposition~5.1]{BDM21}. In some sense, it shows that a blow-up data can also be viewed as a ``blow-down data.''

\begin{prop}\label{prop:implosion}
Given chambers $c_0 \in \mathcal B$ and $c'_0 \in \mathcal B'$, there is a unique map $\tau \colon \mathcal B \to \mathcal B'$ with $\tau(c_0) = c'_0$ such that 
for all $s \in S$ and each $s$-panel $\mathcal R$ in $\mathcal B$,  the set $\tau(\mathcal R)$ is contained in an $s$-panel of $\mathcal B'$ and 
$$\lambda'_s \circ \tau|_{\mathcal R} = h_\mathcal R.$$
Moreover, if $h_\mathcal R$ is surjective for all $\mathcal R$, then  $\tau$ is surjective.
\end{prop}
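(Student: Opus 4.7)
The approach is to build $\tau$ by inductive propagation from $c_0$ along galleries in $\mathcal B$, to verify consistency (independence of the gallery chosen), and to check the panel condition for every panel. Uniqueness is immediate from the inductive procedure; the work lies in existence and well-definedness.

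For the construction, given a gallery $c_0 \sim_{s_1} c_1 \sim_{s_2} \cdots \sim_{s_n} c_n$ in $\mathcal B$, I would set $\tau(c_0) = c'_0$ and, recursively, $\tau(c_i)$ to be the unique chamber of $\mathcal B'$ in the $s_i$-panel through $\tau(c_{i-1})$ whose $\lambda'_{s_i}$-colour equals $h_{\mathcal R_i}(c_i)$, where $\mathcal R_i$ is the $s_i$-panel of $\mathcal B$ containing $c_{i-1}$ and $c_i$. Existence and uniqueness of this chamber come from the legal colouring axiom that $\lambda'_{s_i}$ restricts to a bijection on each $s_i$-panel. Gallery-connectedness of $\mathcal B$ then determines $\tau$ on every chamber; conversely, any $\tau$ satisfying the stated property must be produced by this procedure, giving uniqueness. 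A tacit compatibility between $c_0$ and $c'_0$, namely $\lambda'_s(c'_0) = h_{\mathcal R}(c_0)$ for the $s$-panel $\mathcal R$ through $c_0$, is needed so that the property holds at $c_0$ itself; this mirrors the convention in \cite[Proposition~5.1]{BDM21}.

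The technical heart of the argument is showing that $\tau(c_n)$ does not depend on the chosen gallery. Two galleries in $\mathcal B$ between the same endpoints are connected by a sequence of (i) backtracks, which contribute trivially, and (ii) commutation swaps within a rank-$2$ $\{s,t\}$-residue where $s$ and $t$ are adjacent vertices of $\Gamma$ (the only rank-$2$ spherical residues in a right-angled building). For case (ii), I would consider the four chambers $c_{i-1}, c_i, c_{i+1}, c'_i$ forming such a square, with $c_{i-1} \sim_{s_i} c_i \sim_{s_{i+1}} c_{i+1}$ and $c_{i-1} \sim_{s_{i+1}} c'_i \sim_{s_i} c_{i+1}$, and combine the parallel-panel compatibility $h_{\mathcal R'} = h_{\mathcal R} \circ \proj_{\mathcal R}$ of the blow-up data (applied to the two pairs of parallel panels across the square) with the legal colouring axiom that $\lambda'_{s_i}$ is constant on $s_{i+1}$-panels and vice-versa. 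A direct calculation then shows that both routes around the square yield $\tau(c_{i+1})$ as the unique chamber in the $\{s_i,s_{i+1}\}$-residue of $\mathcal B'$ through $\tau(c_{i-1})$ with prescribed $\lambda'_{s_i}$- and $\lambda'_{s_{i+1}}$-colours equal to $h_{\mathcal R_1}(c_i)$ and $h_{\mathcal R_2}(c_{i+1})$ respectively; this argument works uniformly whether or not $\tau$ happens to collapse some of these chambers. The panel condition $\lambda'_s \circ \tau|_\mathcal R = h_\mathcal R$ for an arbitrary panel $\mathcal R$ then follows by choosing, for any chamber $c \in \mathcal R$, a gallery from $c_0$ to $c$ ending with a step along $\mathcal R$.

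Finally, surjectivity under the assumption that every $h_\mathcal R$ is surjective is a routine induction on the chamber-graph distance from $c'_0$ in $\mathcal B'$: given $c' \sim_s d'$ with $d' = \tau(d)$ already in the image, surjectivity of $h_{\mathcal R_d}$ on the $s$-panel $\mathcal R_d$ through $d$ provides some $c \in \mathcal R_d$ with $h_{\mathcal R_d}(c) = \lambda'_s(c')$, and then $\tau(c) = c'$ by construction. The main obstacle is the commutation-swap computation in the previous paragraph, which, though elementary, requires careful simultaneous bookkeeping of the blow-up compatibility and the legal colouring axioms.
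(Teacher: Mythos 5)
Your proposal is correct and takes essentially the same route as the paper: a recursive definition of $\tau$ propagated from $c_0$, with well-definedness reduced to a square computation that combines the parallel-panel compatibility $h_{\mathcal R'} = h_{\mathcal R}\circ\proj_{\mathcal R}$ with the legal-colouring axioms, and surjectivity obtained by lifting galleries from $\mathcal B'$ to $\mathcal B$. The only organizational difference is that the paper runs the consistency check as an induction on the distance from $c_0$, comparing two predecessors via the closing-squares lemma \cite[Lemma 2.7(i)]{BDM21} instead of invoking homotopy of galleries, and your observation about the tacit colour compatibility between $c_0$ and $c'_0$ matches the convention already implicit in the paper's earlier definition of implosion.
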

\begin{proof}
The proof that $\tau$ exists and satisfies the required properties mimics \cite[Proposition~5.1]{BDM21}, as follows.
We set $\tau(c_0) = c'_0$ and proceed to define $\tau(c)$ by induction on the numerical distance from $c_0$ to $c$.
Assume that $\tau$ has been defined for all chambers at distance $\leq n$ from $c_0$ and let $c$ be a chamber at distance $n+1$.
Let $d$ be a chamber at distance $n$ from $c_0$ that is $s$-adjacent to $c$ for some $s \in S$, and let $\mathcal R$ be the $s$-panel containing $c$ and $d$.
If $h_{\mathcal R}(c) = h_{\mathcal R}(d)$, we declare $\tau(c) := \tau(d)$.
If not, then we declare $\tau(c)$ to be the unique chamber $c'$ in $\mathcal B'$ that is $s$-adjacent to $\tau(d)$ with $\lambda'_s(c') = h_{\mathcal R}(c)$.

We claim that $\tau$ is well-defined.
Indeed, assume that $d_1$ and $d_2$ are two different chambers at distance $n$ from $c_0$ adjacent to $c$.
Then $d_1$ is $s$-adjacent to $c$ and $d_2$ is $t$-adjacent to $c$ for some $s,t \in S$ with $s \neq t$.
By the ``closing squares lemma'' \cite[Lemma 2.7(i)]{BDM21}, we must have $st = ts$ and there is a chamber $e$ at distance $n-1$ from $c_0$ that is $t$-adjacent to $d_1$ and $s$-adjacent to $d_2$.
In particular, the $s$-panels $\mathcal R_1$ through $c$ and $d_1$ and $\mathcal R_2$ through $e$ and $d_2$ are parallel.
The compatibility condition for blow-up data now tells us that $h_{\mathcal R_1}(c) = h_{\mathcal R_2}(d_2)$ and $h_{\mathcal R_1}(d_1) = h_{\mathcal R_2}(e)$.
Similarly, we have parallel $t$-panels $\mathcal R_3$ through $c$ and $d_2$ and $\mathcal R_4$ through $e$ and $d_1$, and we get
$h_{\mathcal R_3}(c) = h_{\mathcal R_4}(d_1)$ and $h_{\mathcal R_3}(d_2) = h_{\mathcal R_4}(e)$.
This implies that both possible paths starting at $\tau(e)$ end up in the same chamber of~$\mathcal B'$, so $\tau(c)$ does not depend on this choice.

We next show that if $c$ and $d$ are $s$-adjacent chambers in $\mathcal B$, then $\tau(c)$ and $\tau(d)$ are also $s$-adjacent. In the case where $d(c_0, c) \neq d(c_0, d)$, this follows directly from the definition of $\tau$. Let us assume that $d(c_0, c) = d(c_0, d)$.  Let $\mathcal R$ be the $s$-panel shared by $c$ and $d$. The assumption ensures that $x =\proj_{\mathcal R}(c_0)$ is different from $c$ and $d$ (recall that $\proj_{\mathcal R}(c_0)$ is the \emph{unique} chamber of $\mathcal R$ at minimal distance from~$c_0$). By the definition of $\tau$, it then follows that $\tau(x)$ and $\tau(c)$ are $s$-adjacent, and that $\tau(x)$ and $\tau(d)$ are also $s$-adjacent. Hence $\tau(c)$ and $\tau(d)$ are $s$-adjacent. This confirms that $\tau$ preserves the relation of $s$-adjacency for all $s \in S$. 

Suppose finally that  $h_\mathcal R$ is surjective for all $\mathcal R$. The fact that  $\tau$ is surjective follows from the definition since the building $\mathcal B'$ is connected, and any gallery $\gamma'$ in $\mathcal B'$ starting at $c_0'$ can be lifted to a gallery $\gamma$ in $\mathcal B$ starting at $c_0$ with $\tau(\gamma) = \gamma'$.
\end{proof}

The map $\tau$ is called an \textbf{implosion} of $\mathcal B$ with respect to given blow-up data.
We illustrate this construction with the following result (see \S\ref{sec:chamber-graph} for the notation). 

\begin{prop}\label{prop:implosion-to-Coxeter}
Let  $H \leq \widetilde{\Aut(\mathcal B)}$  and let $\mathscr H = \{h_\mathcal R \colon \mathcal R \to \mathbf Z\}$ be an $H$\dash equivariant  $\mathbf Z$-blow-up data on $\mathcal B$. Let also $\mathscr H' = \{h'_\mathcal R \colon \mathcal R \to \mathbf Z/2\mathbf Z\}$ be the blow-up data defined by 
$$h'_\mathcal R(c) = h_\mathcal R(c) +2\mathbf Z.$$ 
Let $c_0 \in \mathcal B$ and $c'_0 \in \mathcal B'$ and let $\tau \colon \mathcal B \to \mathcal B'$ be the implosion map satisfying $\tau(c_0) = c'_0$ afforded by Proposition~\ref{prop:implosion}.

Then   $\mathcal B'$ is the thin building associated with  the Coxeter group $W(\Gamma)$. Moreover, there is a homomorphism $\alpha \colon H \to \widetilde{\Aut(\mathcal B')} \cong W(\Gamma) \rtimes \Aut(\Gamma)$ such that $\tau \circ g = \alpha(g) \circ \tau$ for all $g \in H$.
\end{prop}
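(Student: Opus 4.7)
The plan is to reduce the $H$-equivariance of $\mathscr H$ modulo $2$ and then transport the action of $H$ across $\tau$. Every isometry of $\mathbf Z$ (translation or reflection) descends to an isometry of $\mathbf Z/2\mathbf Z$, so the cocycle $\rho(g,\mathcal R) \in \Isom(\mathbf Z)$ passes to $\rho'(g,\mathcal R) \in \Isom(\mathbf Z/2\mathbf Z)$ satisfying
\[ h'_{g(\mathcal R)} \circ g|_{\mathcal R} = \rho'(g,\mathcal R) \circ h'_{\mathcal R}, \]
and the cocycle identity transfers as well. Thus $\mathscr H'$ is an $H$-equivariant blow-up data whose target sets all have cardinality $2$, which forces $\mathcal B'$ to be the unique semi-regular right-angled building of type $\Gamma$ with thickness $2$, namely the Coxeter complex of $W(\Gamma)$. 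The identification $\widetilde{\Aut(\mathcal B')} \cong W(\Gamma) \rtimes \Aut(\Gamma)$ is then standard for Coxeter complexes.

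I then set $\alpha(g)(\tau(c)) := \tau(g(c))$ for $g \in H$ and $c \in \mathcal B$. The crucial step is well-definedness: if $\tau(c) = \tau(d)$, then $\tau(g(c)) = \tau(g(d))$. Pick any gallery $c = c_0 \sim_{s_1} c_1 \sim \cdots \sim_{s_n} c_n = d$ and let $\mathcal R_i$ be the $s_i$-panel through $c_{i-1}$ and $c_i$. From Proposition~\ref{prop:implosion}, $\tau(\mathcal R_i)$ sits inside a single $s_i$-panel of $\mathcal B'$ and $\lambda'_{s_i} \circ \tau|_{\mathcal R_i} = h'_{\mathcal R_i}$; since $\lambda'_{s_i}$ is injective on a panel,
\[ \tau(c_{i-1}) = \tau(c_i) \iff h'_{\mathcal R_i}(c_{i-1}) = h'_{\mathcal R_i}(c_i). \]
The mod-$2$ equivariance turns this into the equivalent condition $h'_{g(\mathcal R_i)}(g(c_{i-1})) = h'_{g(\mathcal R_i)}(g(c_i))$, i.e.\ $\tau(g(c_{i-1})) = \tau(g(c_i))$. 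Consequently the sequences $\tau(c_0), \ldots, \tau(c_n)$ and $\tau(g(c_0)), \ldots, \tau(g(c_n))$ in the thin building $\mathcal B'$ stammer at exactly the same positions; after removing the stammers, their gallery types are $(s_{i_1}, \ldots, s_{i_k})$ and $(\pi(g)(s_{i_1}), \ldots, \pi(g)(s_{i_k}))$ respectively. Since $\tau(c) = \tau(d)$, the first is a closed gallery in the Coxeter complex, so $s_{i_1} \cdots s_{i_k} = 1$ in $W(\Gamma)$; applying the diagram automorphism $\pi(g) \in \Aut(W(\Gamma), S)$ yields $\pi(g)(s_{i_1}) \cdots \pi(g)(s_{i_k}) = 1$, so the second is closed as well, and therefore $\tau(g(c)) = \tau(g(d))$.

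It remains to verify that each $\alpha(g)$ is a type-permuting automorphism of $\mathcal B'$ with diagram automorphism $\pi(g)$ (lift $s$-adjacent chambers in $\tau(\mathcal B)$ back to $\mathcal B$, apply $g$, and push down by $\tau$), that $\alpha$ is a group homomorphism (direct from the defining formula $\alpha(gh)(\tau(c)) = \tau(gh(c)) = \alpha(g)(\alpha(h)(\tau(c)))$), and that $\tau \circ g = \alpha(g) \circ \tau$ holds by construction. Surjectivity of $\tau$, which is automatic as soon as each $h_{\mathcal R}$ surjects onto $\mathbf Z$ (hence each $h'_{\mathcal R}$ onto $\mathbf Z/2\mathbf Z$), uniquely determines $\alpha(g)$ on all of $\mathcal B'$. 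The genuine obstacle is the gallery-closure step in the middle paragraph, where thinness of $\mathcal B'$ is essential: the characterisation of closed galleries in a Coxeter complex by the word evaluating to $1 \in W(\Gamma)$ is precisely what allows the diagram automorphism $\pi(g)$ to preserve the closure condition. Everything else is combinatorial bookkeeping.
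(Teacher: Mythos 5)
Your argument is essentially sound, and it runs on the same engine as the paper's proof: galleries in $\mathcal B$ push down under $\tau$ to galleries in the thin building $\mathcal B'$ whose types are read off from the blow-up data modulo $2$; the $H$-equivariance of $\mathscr H$ (every isometry of $\mathbf Z$ either preserves or swaps parities, hence preserves equality of parities) guarantees that the stammering pattern is preserved and the non-stammering types are transformed by $\pi(g)$; and thinness of $\mathcal B'$ converts this combinatorial bookkeeping into the desired statement. The organization is different, though: you define $\alpha(g)$ by transporting the $H$-action across $\tau$ and therefore must prove well-definedness on the fibers of $\tau$ (which you do correctly, via the closed-gallery/word-evaluates-to-$1$ characterization in the Coxeter complex), whereas the paper defines $\alpha(g)$ abstractly as the unique element of $\widetilde{\Aut(\mathcal B')}$ with diagram part $\pi(g)$ sending $c'_0$ to $\tau(g(c_0))$, and then verifies the intertwining relation $\tau\circ g=\alpha(g)\circ\tau$ along minimal galleries issuing from $c_0$. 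Your route buys a more transparent definition of $\alpha$; the paper's buys independence from any surjectivity of $\tau$.

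That last point is the one place where your proof falls short of the stated generality. The proposition does not assume the maps $h_\mathcal R$ (or their mod-$2$ reductions) to be surjective: for instance, constant blow-up data is allowed and can be made $H$-equivariant, in which case $\tau$ is far from onto. Your transport then defines $\alpha(g)$ only on $\tau(\mathcal B)$, and the extension to an automorphism of all of $\mathcal B'$ — which you dismiss by invoking surjectivity — is genuinely missing in that generality; likewise your verification that $\alpha(g)$ is bijective and type-permuting leans on lifting arbitrary adjacent chambers of $\mathcal B'$ back to $\mathcal B$, which again needs surjectivity of the $h'_\mathcal R$. The repair is cheap and brings you back to the paper's argument: define $\alpha(g)$ outright as the unique element of $W(\Gamma)\rtimes\Aut(\Gamma)$ with diagram part $\pi(g)$ mapping $\tau(c_0)$ to $\tau(g(c_0))$; your gallery computation, run from the base chamber $c_0$ rather than between two chambers in a common fiber, then yields $\tau\circ g=\alpha(g)\circ\tau$ without any surjectivity hypothesis. (In the intended application this is moot, since Theorem~\ref{thm:HK} provides surjective blow-up maps, so your extra assumption is harmless there.)
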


A very special case to keep in mind is the group $H = G(\Gamma)$, i.e. the case where $H$ is the RAAG of type $\Gamma$. In that case, Proposition~\ref{prop:implosion-to-Coxeter} affords the standard quotient map of $G(\Gamma)$ to the Coxeter group $W(\Gamma)$. The point of the proposition is to construct a suitable generalization of this quotient map, under the sole hypothesis that $\mathcal B$ carries an $H$-equivariant $\mathbf Z$-blow-up data. 

\begin{proof}[Proof of Proposition~\ref{prop:implosion-to-Coxeter}]
By definition, the building $\mathcal B'$ is right-angled of type $\Gamma$ and of constant thickness~$2$. Hence it is the thin building of type $\Gamma$. This implies that $ \widetilde{\Aut(\mathcal B')} \cong W(\Gamma) \rtimes \Aut(\Gamma)$. 

Fix $g \in H$. Let $\pi \colon \widetilde{\Aut(\mathcal B)}  \to \Aut(\Gamma)$ be the canonical projection. We need to construct $\alpha(g) \in \widetilde{\Aut(\mathcal B')}$ such that 
$$\tau \circ g = \alpha(g) \circ \tau.$$
Set $\tau(g(c_0)) = c''_0$ and $v = \delta(c'_0, c''_0) \in W(\Gamma)$,  where $\delta \colon \mathcal B' \times \mathcal B' \to W(\Gamma)$ is the Weyl distance.  By viewing the thin building $\mathcal B'$ as the Cayley graph of $W(\Gamma)$ based at the chamber $c'_0$, we fix an isomorphism  $\widetilde{\Aut(\mathcal B')} \cong W(\Gamma) \rtimes \Aut(\Gamma)$. This allows us to define  $\alpha(g)$ by identifying $\alpha(g)$ as the element $(v, \pi(g)) \in W(\Gamma) \rtimes \Aut(\Gamma)$. Equivalently, this means that $\alpha(g)$ is  the unique element of $\widetilde{\Aut(\mathcal B')}$ mapping $c'_0$ to $c''_0$, and  such that for all $x' \in \mathcal B'$ and  $w' \in W(\Gamma)$, we have 
$$\delta(c'_0, x') = w' \quad \Longrightarrow \quad \delta(c''_0, \alpha(g).x') = \pi(g)(w').$$

We now show   that for each chamber $x \in \mathcal B$, we have $\tau \circ g(x) = \alpha(g) \circ \tau(x)$.

Fix a minimal gallery 
$$c_0 \sim_{s_1} c_1  \sim_{s_2} \cdots  \sim_{s_n} c_n = x,$$ 
where $s_i \in S$ for all $i$, and set $w =s_1\dotsm s_n$. By \cite[Lemma~5.16]{AB08}, we have $\delta(c_0, x) = w$. 
For each $i$, let $\mathcal R_i$ be the $s_i$-panel shared by $c_{i-1}$ and $c_i$. We also define $s'_i \in S$ by setting 
$$s'_i =\left\{
 \begin{array}{cl}
 s_i & \text{if  } h_{\mathcal R_i} (c_{i-1}) \not \equiv h_{\mathcal R_i} (c_{i}) \mod 2\\
 1 & \text{if  } h_{\mathcal R_i} (c_{i-1})  \equiv h_{\mathcal R_i} (c_{i}) \mod 2.
 \end{array}
 \right.$$
 Set $x' = \tau(x)$ and $w' = s'_1 \dotsm s'_n \in W(\Gamma)$.   Since  $\tau$ maps $t$-panels to $t$-panels for all $t \in S$ by  Proposition~\ref{prop:implosion}, we infer that 
$$\big(\tau(c_0) = c'_0, \tau(c_1) ,  \dots  , \tau(c_n) = \tau(x) = x'\big)$$ 
is a sequence of consecutively adjacent chambers in the thin building $\mathcal B'$ such that $\tau(c_{i-1})$ and $\tau(c_i)$ are distinct and $s'_i$-adjacent if  $s'_i \neq 1$, and $\tau(c_{i-1}) = \tau(c_i)$ if $s'_i = 1$. Since $\mathcal B'$ is a thin building of type $\Gamma$, it follows that $\delta(c'_0, x') =  w'$ (see \cite[\S 3.5]{AB08}). 
%
%

Transforming the above gallery from $c_0$ to $x$ by $g$, we obtain a minimal gallery
$$g(c_0) \sim_{t_1} g(c_1)  \sim_{t_2} \cdots  \sim_{t_n} g(c_n) = g(x),$$
where $t_i = \pi(g)(s_i)$ for all $i$. We also define $t'_i \in S$ in a similar way as $s'_i$, by setting $t'_i = t_i$ if $h_{g(\mathcal R_i)} \circ g(c_{i-1}) \not \equiv h_{g(\mathcal R_i)} \circ g(c_{i}) \mod 2 $, and by setting $t'_i = 1$  otherwise. Again, using that the implosion $\tau$ maps panels to panels of the same type, it follows that 
$$\big(\tau\circ g(c_0) = c''_0 ,   \tau \circ g (c_1) , \dots ,  \tau \circ g(c_n) = \tau \circ g (x)\big)$$ 
is a sequence of consecutively adjacent chambers  joining $c''_0$ to $\tau \circ g(x)$ in $\mathcal B'$,  such that $\tau \circ g (c_{i-1})$ and $\tau \circ g (c_i)$ are distinct and $t'_i$-adjacent if  $t'_i \neq 1$, and $\tau \circ g (c_{i-1}) = \tau \circ g(c_i)$  if $t'_i = 1$. As above, since the building $\mathcal B'$ is thin, we deduce that $\delta(c''_0, \tau\circ g(x)) = t'_1 \dotsm t'_n$. 

We now recall that $\mathscr H$ is $H$-equivariant by hypothesis. 
Since every isometry of $\mathbf Z$ either preserves the even numbers, or swaps the even and odd numbers, it follows  that for all $i \in \{1, \dots, n\}$, we have 
$$ h_{g(\mathcal R_i)} \circ g(c_{i-1}) \equiv h_{g(\mathcal R_i)} \circ g(c_{i}) \mod 2 \quad \Longleftrightarrow \quad h_{\mathcal R_i} (c_{i-1}) \equiv h_{ \mathcal R_i}(c_{i}) \mod 2.$$
Therefore, we have $t'_i = t_i$ if and only if $s'_i = s_i$. It follows that $\pi(g)(s'_i)=t'_i$ for all $i$, so that $\pi(g)(w') =\pi(g)(s'_1\dotsm s'_n)= t'_1 \dotsm t'_n$. In view of the definition of $\alpha(g)$, this implies that 
 $\alpha(g)$ maps $x' = \tau(x)$ to $\tau \circ g(x)$, thereby confirming that $\tau \circ g(x) = \alpha(g) \circ \tau(x)$.

Since the latter equality holds for all $x \in \mathcal B$, we infer that $\alpha \colon H \to \widetilde{\Aut(\mathcal B')}$ is a group homomorphism. The conclusion follows.
%
%
%
\end{proof}

\subsection{Locally compact groups quasi-isometric to RAAGs}

Our next goal is to specialize  Huang--Kleiner's Theorem~\ref{thm:HK} to possibly non-discrete locally compact groups. We view a compactly generated locally compact group as a metric space, by endowing it with the word metric associated with a compact generating set. 

The main point of the following is that the resulting action on the blown-up chamber graph is   automatically continuous. This will follow from a continuity criterion due to Y.~Cornulier \cite[Lemma~19.29]{Cor18}.

\begin{prop}\label{prop:continuity}
Let $\Gamma$ be a finite graph satisfying (R1) and (R2), and $H$ be a compactly generated   locally compact group that is quasi-isometric to $G(\Gamma)$. Then the  $H$-actions on $\mathscr C(\mathcal B)$ and on the blown-up chamber graph $ |\mathcal B|_\mathscr H$, afforded by Theorem~\ref{thm:HK},  are  continuous. In particular, the identity component $H^\circ$ is compact and acts trivially.
\end{prop}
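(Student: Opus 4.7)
The plan is to apply Cornulier's continuity criterion \cite[Lemma~19.29]{Cor18} to the abstract action of $H$ on $|\mathcal B|_\mathscr H$ provided by Theorem~\ref{thm:HK}, then transfer continuity to $\mathscr C(\mathcal B)$, and finally deduce the statement about $H^\circ$ from the openness of chamber stabilizers combined with properness of the action.

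First, I would invoke Theorem~\ref{thm:HK}: since $\Gamma$ satisfies (R1) and (R2), we obtain an action of $H$ by automorphisms on the connected locally finite graph $|\mathcal B|_\mathscr H$ that is proper and cobounded, and such that for any chamber $c$ the orbit map $h \mapsto h \cdot c$ from $H$ (equipped with the word metric from a compact generating set) to $|\mathcal B|_\mathscr H$ is a quasi-isometry. In particular, this orbit map is metrically proper: the preimage of any bounded subset of $|\mathcal B|_\mathscr H$ is bounded in the word metric of $H$, and bounded sets in the word metric of a compactly generated locally compact group are relatively compact. Cornulier's criterion then yields that the $H$-action on $|\mathcal B|_\mathscr H$ is continuous. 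Since $\mathscr C(\mathcal B)$ and $|\mathcal B|_\mathscr H$ share the same chamber set and the natural map between them is $H$-equivariant, the induced permutation action of $H$ on the chambers of $\mathscr C(\mathcal B)$ is literally the same. As continuity of an action on a discrete set amounts to the openness of every point stabilizer, continuity is inherited by the $H$-action on $\mathscr C(\mathcal B)$ as well.

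For the final assertion, every chamber stabilizer $H_c$ is open and therefore clopen in $H$; as $H^\circ$ is connected, this forces $H^\circ \subseteq H_c$ for every chamber $c$, and hence $H^\circ$ acts trivially on both $|\mathcal B|_\mathscr H$ and $\mathscr C(\mathcal B)$. Properness of the $H$-action on the locally finite graph $|\mathcal B|_\mathscr H$ ensures that each $H_c$ is relatively compact, hence compact (being closed in $H$), so the closed subgroup $H^\circ$ of $H_c$ is also compact. The only delicate point in this plan is checking the hypothesis of Cornulier's criterion — namely, metric properness of the orbit map — but this is immediate from the quasi-isometry statement in Theorem~\ref{thm:HK} together with the fact that closed balls in the word metric of a compactly generated locally compact group are compact.
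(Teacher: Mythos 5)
There is a genuine gap, and it sits exactly at the point you describe as ``the only delicate point.'' You have misidentified the hypothesis of Cornulier's criterion. As it is used here, \cite[Lemma~19.29]{Cor18} upgrades an \emph{abstract} cobounded action by automorphisms on a connected locally finite graph $Y$ to a continuous one \emph{provided that the only automorphism of $Y$ with bounded displacement is the identity}; metric properness of the orbit maps is not the relevant hypothesis and plays no role in the continuity step. Indeed, conditions such as properness, coboundedness and the orbit maps being quasi-isometries are coarse: they are insensitive to precomposing the action with a ``bounded'' discontinuous abstract automorphism of $H$ (one preserving compact subsets), so they cannot by themselves detect continuity, which is a local statement about small neighbourhoods of the identity acting with small displacement. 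This is precisely why the lemma needs the no-bounded-automorphisms hypothesis, and verifying it is the mathematical heart of the paper's proof: one uses (R1) and (R2) to ensure that the type of $\mathcal B$ has no irreducible component of spherical or affine type, then invokes Rank Rigidity for $\CAT$ cube complexes \cite{CapSag} to conclude that no non-trivial automorphism of the Davis realization $|\mathcal B|$ has bounded displacement, and finally transfers this to $Y = |\mathcal B|_{\mathscr H}$ via the $1$-Lipschitz map $Y \to \mathcal B$. Your proposal never states or checks this condition, and in fact uses (R1), (R2) only through \cref{thm:HK}, whereas they are needed a second time here. Note also that, e.g., the graphs $\mathcal G_n$ with $n \geq 2$ from the paper do have non-trivial bounded automorphisms, so the condition is not automatic and genuinely restricts the situation.

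The remaining parts of your argument are fine and agree with the paper: once continuity on $Y$ is known, the action on $\mathscr C(\mathcal B)$ (same vertex set, equivariant identity map) is continuous because chamber stabilizers are open; $H^\circ$ is contained in every (clopen) stabilizer, hence acts trivially, and compactness of $H^\circ$ follows from properness of the action. So the proof can be repaired by inserting the bounded-displacement verification sketched above before invoking \cite[Lemma~19.29]{Cor18}.
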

\begin{proof}
Recall that the vertices of  $\mathscr C(\mathcal B)$ and of the blown-up chamber graph $Y = |\mathcal B|_\mathscr H$ are the chambers of $\mathcal B$. It suffices to show that the $H$-action on $Y$ is continuous. 

To check continuity, we invoke the  criterion  afforded  by \cite[Lemma~19.29]{Cor18}. Since the graph $Y$ is connected and locally finite, its full automorphism group $\Aut(Y)$ is a tdlc group; its action on $Y$ is continuous. We claim that the only element of $\Aut(Y)$ acting on $Y$ with bounded displacement is the identity. The hypotheses (R1) and (R2) ensure that the type of $\mathcal B$ has no irreducible component of spherical or affine type. Hence, by the Rank Rigidity theorem from \cite{CapSag} it follows that no non-trivial automorphism of the Davis realization $|\mathcal B|$ has bounded displacement. Since the natural map $Y \to \mathcal B$ is $1$-Lipschitz, an automorphism acting with bounded displacement on $Y$ also has bounded displacement on $\mathcal B$, and must therefore be trivial. (Alternatively, we may also use a similar argument on the geometric realization of $Y$ constructed in \cite{HK18}, which is also a CAT(0) cube complex, hence subject to \cite{CapSag}.) 
By  Theorem~\ref{thm:HK}, the $H$-action on $Y$ is a cobounded action by automorphisms. Since compact subsets of $H$ are bounded in the word metric of $H$, they have bounded displacement in $Y$; thus the $H$-action on $Y$ is locally bounded. 
Therefore the hypotheses of \cite[Lemma~19.29]{Cor18} and we infer that the $H$-action on $Y$ is continuous. The properness of the action on $Y$ follows from the metric properness asserted by Theorem~\ref{thm:HK}. 

Since $\Aut(Y)$ is totally disconnected, the identity component $H^\circ$ acts trivially on~$Y$. Hence it is compact, since the action is proper. 
\end{proof}

Combining the previous results, we obtain the following.

\begin{thm}\label{thm:Coxeter-action}
Let $\Gamma$ be a finite graph satisfying  (R1),  (R2), (R3), and let    $H$ be a compactly generated locally compact group quasi-isometric to $G(\Gamma)$.

Then there is a continuous homomorphism $H \to W(\Gamma) \rtimes \Aut(\Gamma)$ whose image is of finite index. In particular, $H$ has a discrete quotient commensurable with $W(\Gamma)$.
\end{thm}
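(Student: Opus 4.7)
The plan is to combine Huang--Kleiner's blow-up machinery with the implosion construction of Proposition~\ref{prop:implosion-to-Coxeter}, and then verify continuity and finite-indexness of the image via a stabilizer analysis. Let $\mathcal B$ be the right-angled building of type $\Gamma$ with countably infinite thickness. Theorem~\ref{thm:HK} provides a cobounded action of $H$ on $\mathscr C(\mathcal B)$ together with an $H$-equivariant $\mathbf Z$-blow-up data $\mathscr H$ such that the action on $|\mathcal B|_{\mathscr H}$ is proper and cobounded. Invoking (R3) and Lemma~\ref{lem:autom-chamber-graph}, the $H$-action on $\mathscr C(\mathcal B)$ yields a homomorphism $H \to \widetilde{\Aut(\mathcal B)}$; Proposition~\ref{prop:continuity} ensures this homomorphism is continuous, so that chamber stabilizers in $H$ are compact and open. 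Applying Proposition~\ref{prop:implosion-to-Coxeter} to $H$ and $\mathscr H$ yields the sought homomorphism $\alpha \colon H \to W(\Gamma) \rtimes \Aut(\Gamma)$ intertwining $\tau \colon \mathcal B \to \mathcal B'$, where $\mathcal B'$ is the thin building of type $\Gamma$.

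I would then show continuity of $\alpha$ by proving that $\ker\alpha$ is open. Let $p \colon H \to \Aut(\Gamma)$ denote the composition of $\alpha$ with the canonical projection $W(\Gamma) \rtimes \Aut(\Gamma) \to \Aut(\Gamma)$; under the identification from Lemma~\ref{lem:autom-chamber-graph}, $p$ coincides with the restriction to $H$ of the type-permutation homomorphism $\pi \colon \widetilde{\Aut(\mathcal B)} \to \Aut(\Gamma)$. Fix a chamber $c_0 \in \mathcal B$, and for each $s \in S$ choose a chamber $d_s \neq c_0$ in the $s$-panel through $c_0$. If $g \in H$ fixes $c_0$ and every $d_s$, then each $d_s$ lies in both the $s$-panel and the $p(g)(s)$-panel through $c_0$, forcing $p(g) = 1$. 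The stabilizer of the finite set $\{c_0\} \cup \{d_s : s \in S\}$ is an intersection of finitely many compact open subgroups, hence compact and open; so $\ker p$ is open in $H$. Next, since $\mathcal B'$ is thin, $W(\Gamma)$ acts simply transitively on its chambers, so a non-trivial element of $W(\Gamma) \rtimes \Aut(\Gamma)$ either has non-trivial image in $\Aut(\Gamma)$ or moves every chamber of $\mathcal B'$. For $g \in \ker p \cap H_{c_0}$, the relation $\tau \circ g = \alpha(g) \circ \tau$ yields $\alpha(g)(\tau(c_0)) = \tau(c_0)$ together with trivial type permutation, hence $\alpha(g) = 1$. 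Thus $\ker p \cap H_{c_0} \subseteq \ker\alpha$ is a compact open subgroup of $H$, so $\alpha$ is continuous.

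For the finite-index claim, note that $[H : \ker p] \leq |\Aut(\Gamma)| < \infty$, and that $H$ acts with finitely many orbits on $\mathcal B$ by Theorem~\ref{thm:HK}; therefore $\ker p$ has finitely many orbits on $\mathcal B$ as well. Because $\tau$ is surjective (as the blow-up data consists of surjective maps) and satisfies $\tau \circ g = \alpha(g) \circ \tau$ for $g \in \ker p$, the image $\alpha(\ker p) \leq W(\Gamma)$ has finitely many orbits on $\mathcal B'$; by simple transitivity of $W(\Gamma)$ on $\mathcal B'$, this forces $[W(\Gamma) : \alpha(\ker p)] < \infty$. Combined with the finiteness of $\Aut(\Gamma)$, this yields that $\alpha(H)$ has finite index in $W(\Gamma) \rtimes \Aut(\Gamma)$. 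The ``in particular'' statement then follows: $H/\ker\alpha \cong \alpha(H)$ is a discrete quotient commensurable with $W(\Gamma) \rtimes \Aut(\Gamma)$, which is commensurable with $W(\Gamma)$.

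The main obstacle is not a deep new argument but rather the careful bookkeeping in the continuity step: it requires detecting type-preservation of an element of $\widetilde{\Aut(\mathcal B)}$ by finitely many chamber conditions (which is where (R3) enters through Lemma~\ref{lem:autom-chamber-graph}), and then exploiting the simple transitivity of $W(\Gamma)$ on the thin building $\mathcal B'$ to reduce triviality of $\alpha(g)$ to a single chamber-fixing condition. Everything else consists of unpacking the statements of Theorem~\ref{thm:HK} and Proposition~\ref{prop:implosion-to-Coxeter}.
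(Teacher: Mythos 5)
Your proposal is correct and follows essentially the same route as the paper: Theorem~\ref{thm:HK} together with Proposition~\ref{prop:continuity} and Lemma~\ref{lem:autom-chamber-graph} (via (R3)) to obtain a continuous action on $\mathscr C(\mathcal B)$ inside $\widetilde{\Aut(\mathcal B)}$, then Proposition~\ref{prop:implosion-to-Coxeter} to implode to the thin building, with your stabilizer and orbit-counting arguments simply making explicit the paper's terser observations that open stabilizers on $\mathcal B$ yield open stabilizers on $\mathcal B'$ and that surjectivity of $\tau$ plus coboundedness force the image to have finite index in $W(\Gamma)\rtimes\Aut(\Gamma)$. The only point you should add is the degenerate case where $\Gamma$ has a single vertex, so that $G(\Gamma)\cong\mathbf{Z}$ is cyclic and the Huang--Kleiner input behind Theorem~\ref{thm:HK} does not apply; the paper dispatches this case first by noting that $W(\Gamma)$ is then finite and the trivial homomorphism already satisfies the conclusion.
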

\begin{proof}
The statement is obvious if $\Gamma$ has only one vertex: indeed in that case $W(\Gamma)$  is finite, and the trivial action of $H$ satisfies the conclusions. We assume henceforth that $\Gamma$ has more than one vertex, hence $G(\Gamma)$ is not cyclic. 

By Theorem~\ref{thm:HK}, the group $H$  has a cobounded action by automorphisms on the chamber graph $\mathscr C(\mathcal B)$, where $\mathcal B$ is the regular right-angled building of type $\Gamma$ and countably infinite thickness. Since $\Gamma$ satisfies (R3), that action takes values in $\Aut(\mathcal B) \rtimes \Aut(\Gamma)$ by Lemma~\ref{lem:autom-chamber-graph}. Moreover, the action is continuous by Proposition~\ref{prop:continuity}. Theorem~\ref{thm:HK} also yields  an $H$-equivariant $\mathbf Z$-blow-up data $\mathscr H$.

Let $\mathcal B'$ be the right-angled thin building of type $\Gamma$, so that $\mathscr C(\mathcal B')$ is the Cayley graph of $W(\Gamma)$ with respect to its standard Coxeter generating set.   Proposition~\ref{prop:implosion-to-Coxeter} yields a homomorphism $H \to \widetilde{\Aut(\mathcal B')}  \cong W(\Gamma) \rtimes \Aut(\Gamma)$ such that the implosion map $\tau \colon \mathcal B \to \mathcal B'$ is equivariant. Since $H$ acts with open stabilizers on $\mathcal B$, its action on $\mathcal B'$ also has open stabilizers, hence it is continuous. 

Since $\tau$ is surjective and since the $H$-action on $\mathcal B$ is cobounded, it follows that the $H$-action  $\mathcal B'$ is cocompact. Hence the image of $H$ in $W(\Gamma) \rtimes \Aut(\Gamma)$ has finite index.
\end{proof}

\section{Lattice envelopes}

\subsection{\texorpdfstring{Envelopes of $2$-ended groups}{Envelopes of 2-ended groups}}

Let $X$ be a finitely generated $2$-ended group and $\Omega$ be a set on which $X$ acts regularly. The goal of this section is to describe all the lattice envelopes $Y$ of $X$ such that $X \leq Y \leq \Sym(\Omega)$. This will be achieved in Lemma~\ref{lem:env-2-ended}, which describes locally finite connected graph structures on  $\Omega$ that are automatically preserved by $Y$. 

The graphs in question belong to the following family. For each integer $n \geq 1$, let $\mathcal G_n$ be the  Cayley graph of 
$$Z_n = \mathbf Z/n\mathbf Z \times \mathbf Z$$
with respect to the generating set 
$$(\mathbf Z/n\mathbf Z \times \{0\}) \cup (\mathbf Z/n\mathbf Z \times \{1\}).$$
It is easy to see that $\Aut(\mathcal G_n)$ is isomorphic to the unrestricted wreath product
$$\Aut(\mathcal G_n) \cong \big( \prod_{\mathbf Z} \Sym(n) \big) \rtimes D_\infty,$$
where the action of $D_\infty$ on $\mathbf Z$ is the natural action occurring by viewing $D_\infty$ as the automorphism group of the simplicial line, whose vertex set is identified with~$\mathbf Z$. 
Clearly we have $\Aut(\mathcal G_1)  \cong D_\infty$.

Before stating the lemma, we recall that it follows from Stallings' theorem that every finitely generated $2$-ended group $X$ is virtually cyclic. Hence it has a largest finite normal subgroup $N$ such that $X/N$ is infinite cyclic or infinite dihedral. A similar result holds for in the non-discrete case: every compactly generated $2$-ended locally compact group $L$ has a largest compact normal subgroup $W$ such that $L/W$ is infinite cyclic or infinite dihedral, see  \cite[Proposition~5.6]{CCMT}.  If $L$ is a lattice envelope of $X$, the image of $X$ in $L/W$
 is a finite index subgroup. A key point for us is that, under the hypotheses of the next result, that index must be $1$ or $2$.

\begin{lem}\label{lem:env-2-ended}
Let $X$ be a finitely generated $2$-ended group and $L$ be a locally compact group containing $X$ as a cocompact lattice. Let also $\Omega$ be a discrete set and $\alpha \colon L \to \Sym(\Omega)$ be a continuous homomorphism such that the  restriction $\alpha|_X$ yields a regular action of $X$ on $\Omega$, and let $n$ be the order of the largest finite normal subgroup $N$ of  $X$. 

Then $L$ has a largest compact normal subgroup $W$, and exactly one of the following two cases occurs. 
\begin{enumerate}
\item All $W$-orbits on $\Omega$  have size $n$, and there is a bijection $\sigma \colon \Omega \to V\mathcal G_n$ such that $\sigma \alpha(L) \sigma^{-1} \leq \Aut(\mathcal G_n)$.

\item All $W$-orbits have size $2n$, the quotient $X/N$ is dihedral, and  there is a bijection $\sigma \colon \Omega \to V\mathcal G_{2n}$ such that $\sigma \alpha(L) \sigma^{-1} \leq \Aut(\mathcal G_{2n})$.
\end{enumerate} 
\end{lem}
\begin{proof}
Since the $X$-action on $\Omega$ is regular, we have a strict factorization $L = X L_\omega$, where $L_\omega$ is the stabilizer of a point $\omega \in \Omega$. By hypothesis $X$ is a cocompact lattice in $L$, hence $L_\omega$ is compact. In particular, $L$ acts properly on $\Omega$, and the kernel $K$ of the action is compact. Upon replacing $L$ by $L/K$ we shall assume henceforth that $X \leq L \leq  \Sym(\Omega)$.

Since $X$ is $2$-ended and $L$ is quasi-isometric to $X$, it follows that  $L$ is $2$-ended as well. Therefore it has a largest compact normal subgroup $W$ such that $L/W$ is infinite cyclic or infinite dihedral (see \cite[Proposition~5.6]{CCMT}). In particular,  the image of $X$ in $L/W$ is also  infinite cyclic or infinite dihedral, hence without any non-trivial finite normal subgroup. It follows that the   largest finite normal subgroup $N$ of $X$ is contained in $W$. Therefore we have $X \cap W  = N$.  

 Since $X/N $ is isomorphic to $\mathbf Z$  or $D_\infty$, there is an action $\rho$ of $X$ by automorphisms on the simplicial line $\mathbf Z$ such that $N$ acts trivially and $X/N$ acts regularly on the vertices. Indeed, we  may simply view the simplicial line $\mathbf Z$ as the Cayley graph of $X/N$ with respect to a standard generating set (a single generator if $X/N$ is cyclic, and a pair of involutions if $X/N$ is dihedral). By indexing the $N$-orbits on $\Omega$ accordingly, we obtain an $n$-to-$1$, $\rho$-equivariant, surjective map $f \colon \Omega \to \mathbf Z$. 


Since $N \leq W$ and since $W$ is normal in $L$, it follows that the $W$-orbits form an invariant partition of $\Omega$ that is coarser than the partition into $N$-orbits. Since $W$ is compact, its orbits are finite. Hence the image of the $W$-orbits under the map $f \colon \Omega \to \mathbf Z$  form a partition of the simplicial line into finite subsets, that is $X/N$-invariant.

Assume first that this is the trivial partition into singletons. In that case,  the  $N$-orbits and the $W$-orbits on $\Omega$ coincide. Therefore $L/W$ acts by permutations on the set $f(\Omega)$. Moreover, the restriction of that action to $XW/W \cong X/N$ is transitive.  Recall that $L/W$ is infinite cyclic or infinite dihedral. All transitive actions of that group on an infinite set have stabilizers of order~$1$ or $2$. 
It follows that the  group $XW/W \cong X/N$ has index $1$ or $2$ in  $L/W$.  In either case, it follows that the $L/W$-action on the set $f(\Omega) = \mathbf Z$  preserves the simplicial line structure on $\mathbf Z$, since the full automorphism group of $X/N$ acts on each of its Cayley graphs with respect to its standard generating sets. Thus the $X$-action on the simplicial line extends to an $L$-action, and the  $X$-equivariant map $f\colon \Omega \to \mathbf Z$ is also $L$-equivariant. 
We now define a graph structure on $\Omega$, denoted by $\mathcal G$,  by declaring that two points $v, w \in \Omega$ are adjacent if $|f(v) - f(w)| \leq 1$. 
It follows that  the graph $\mathcal G$  on $\Omega$ is preserved by $L$. Moreover $\mathcal G$ is isomorphic to $\mathcal G_n$. This finishes the proof in this case. 

Assume now that  $X/N$ is infinite cyclic. Then  the only partition of the simplicial line  into finite subsets that is $X/N$-invariant  is the trivial partition into singletons. Thus we are reduced to the previous case. 

We assume henceforth that $X/N$ is dihedral. 
Since the non-trivial finite subgroups of $X/N$ have order~$2$, the only invariant partitions for the $X$-action on the simplicial line $f(\Omega) = \mathbf Z$ into finite subsets are the trivial partition into singletons, and the partition into cosets of a fixed subgroup of order~$2$. If the partition of $f(\Omega)$ into $W$-orbits is trivial, we are once again reduced to the first case above. It remains to consider the case where the partition consists of subsets of size~$2$, which correspond to cosets of a subgroup of order $2$ of $X/N$, say $\langle \tau\rangle$. Let $\tau' $ be another involution of $X/N$, such that $\{\tau, \tau'\}$ forms a generating pair of the dihedral group $X/N$. Upon redefining the equivariant map $f \colon \Omega \to \mathbf Z$ if necessary by viewing the line  as a Cayley graph of $X/N$ with respect to the generating pair $\{\tau, \tau'\}$, we may assume that $f$ maps the partition into $W$-orbits  to the partition   $\mathbf Z = \bigcup_k \{2k, 2k+1\}$. 

Now the $L$-action on the $W$-orbits factors through $L/W$, which contains the dihedral group $XW/W \cong X/N$. In particular $L/W$ cannot be cyclic, hence it is dihedral. The $L/W$-action on the orbit space $W\backslash \Omega$ is thus a proper action of the  infinite dihedral group, whose restriction to $X$ is transitive. Since the stabilizer of a fixed $W$-orbit in $X$ has order~$2$ in the case at hand, we infer that the projection  of $X$ to the quotient $L/W$ is surjective. Hence $L = XW$. We then define a graph structure $\mathcal G'$ on $\Omega$, defined by declaring $v, w \in \Omega$ to be adjacent if there is $k \in \mathbf Z$ such that $\{ f(v), f(w) \} \subset [2k, 2k+3]$. Observe that $X$ and $W$ both preserve the graph $\mathcal G'$, hence so does the group $L$. Moreover $\mathcal G'$ is isomorphic to $\mathcal G_{2n}$. This concludes the proof.\end{proof}
	
The locally compact groups 
$$Y = \sigma^{-1} \Aut(\mathcal G_n) \sigma \leq \Sym(\Omega) \qquad \text{and} \qquad Y'= \sigma^{-1} \Aut(\mathcal G_{2n}) \sigma \leq \Sym(\Omega)$$ 
afforded by Lemma~\ref{lem:env-2-ended} are lattice envelopes of $X$ contained in $\Sym(\Omega)$. We call them the \textbf{canonical envelopes} of $X$. The lemma shows that indeed, those are natural envelopes of $X$ contained in $\Sym(\Omega)$. 

Remark that we could construct larger envelopes by replacing the interval $[2k, 2k+3]$ by a larger interval at the end of the proof of the lemma. Our choice was  made to obtain the simplest possible graph structure on $\Omega$, enjoying  the property that the quotient graph on the orbit space $W\backslash \Omega$ is the simplicial line.

The following observation provides a dual point of view: we fix the graph $\mathcal G_m$ and characterize the subgroups of its automorphism group acting regularly on its vertex set. 

\begin{lem}\label{lem:lattices-in-Aut(Gm)}
Let $m >0$ and let $X \leq \Aut(\mathcal G_m)$ be a group acting regularly on the vertex set of $\mathcal G_m$. Then $X$ is a finitely generated $2$-ended group. Denoting by $N$ its largest finite normal subgroup , then either  $m=|N|$, or  $X/N$ is dihedral and $m = 2|N|$.
\end{lem}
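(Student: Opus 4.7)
The plan is to verify first that $X$ is finitely generated and $2$-ended, then to identify the largest finite normal subgroup $N$ of $X$ with the intersection $K := X \cap \ker \pi$, where $\pi \colon \Aut(\mathcal G_m) \to D_\infty$ is the canonical projection coming from the semi-direct product decomposition $\Aut(\mathcal G_m) \cong \bigl(\prod_{\mathbf Z} \Sym(m)\bigr) \rtimes D_\infty$ recalled just before the lemma, and finally to compute $|K|$ in terms of $m$.

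For the first assertion, $X$ acts regularly on the vertex set of the connected locally finite graph $\mathcal G_m$, so it is a uniform lattice in the compactly generated locally compact group $\Aut(\mathcal G_m)$. Hence $X$ is finitely generated, and the orbit map $X \to V(\mathcal G_m)$ is a quasi-isometry. Since $\mathcal G_m$ has two ends, so does $X$.

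The kernel of $\pi$ is exactly the subgroup of $\Aut(\mathcal G_m)$ stabilizing each level $L_k := \mathbf Z/m\mathbf Z \times \{k\}$ setwise. Setting $K := X \cap \ker \pi$, which is a normal subgroup of $X$, we have $X/K \cong \pi(X) \leq D_\infty$. Since $X$ is transitive on $V(\mathcal G_m)$, $\pi(X)$ is transitive on the set of levels $\{L_k\}_{k \in \mathbf Z}$, hence infinite, so $\pi(X)$ is isomorphic to $\mathbf Z$ or to $D_\infty$. In either case $\pi(X)$ has no non-trivial finite normal subgroup, so the image $N/K$ of $N$ in $X/K$ is trivial, i.e.\ $N = K$.

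It remains to compute $|K|$. Let $H := \Stab_X(L_0)$; the bijection $X \to V(\mathcal G_m)$, $g \mapsto g(v_0)$, for a fixed $v_0 \in L_0$, identifies $H$ with $L_0$, so $|H| = m$ and $H$ acts regularly on $L_0$. Since $K$ is the kernel of the surjection $\pi|_H \colon H \to \Stab_{\pi(X)}(0)$, we obtain $[H : K] = |\Stab_{\pi(X)}(0)|$. A direct classification of the transitive subgroups of $D_\infty$ acting on $\mathbf Z$ shows that $\Stab_{\pi(X)}(0)$ is either trivial (precisely when $\pi(X)$ is infinite cyclic, or when $\pi(X)$ is an infinite dihedral subgroup whose reflections all fix half-integer points), or of order $2$ (precisely when $\pi(X) = D_\infty$ contains the reflection through $0$). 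This yields $|N| = |K| = m$ in the first case, and $|N| = |K| = m/2$ with $X/N \cong D_\infty$ dihedral in the second; these are exactly the two alternatives in the statement. The only non-routine input is this classification of transitive subgroups of $D_\infty$ on $\mathbf Z$, which pins down that $[H : K] \in \{1, 2\}$; the rest is a direct orbit-counting argument using the regularity of the $X$-action.
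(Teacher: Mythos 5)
Your argument is correct in substance, but it takes a genuinely different route from the paper. The paper deduces the lemma in two lines from \cref{lem:env-2-ended}: since $X$ acts geometrically on the $2$-ended graph $\mathcal G_m$, it is a finitely generated $2$-ended cocompact lattice in $L=\Aut(\mathcal G_m)$, and the largest compact normal subgroup $W=\prod_{\mathbf Z}\Sym(m)$ of $L$ has orbits of size $m$ (the levels), so \cref{lem:env-2-ended} yields exactly the dichotomy $m=|N|$, or $X/N$ dihedral and $m=2|N|$. You instead work directly with the decomposition $\Aut(\mathcal G_m)\cong\bigl(\prod_{\mathbf Z}\Sym(m)\bigr)\rtimes D_\infty$ recorded before the lemma: project to $D_\infty$, identify $N$ with $K=X\cap\ker\pi$, and count via $|{\Stab_X(L_0)}|=m$ and $[\Stab_X(L_0):K]=|{\Stab_{\pi(X)}(0)}|\in\{1,2\}$, the last point resting on the (elementary, correctly stated) classification of transitive subgroups of $D_\infty$ acting on $\mathbf Z$. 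Your route is more elementary and self-contained, avoiding the machinery of canonical envelopes and largest compact normal subgroups of $2$-ended locally compact groups; the cost is that it redoes in this special case the analysis that \cref{lem:env-2-ended} performs once and for all, and the paper's formulation through $W$ is the one that dovetails with how the lemma is exploited in \cref{cor:different-envelopes}. Your identification of $\Stab_X(L_0)$ with $L_0$ is legitimate because every automorphism of $\mathcal G_m$ permutes the levels, which is exactly what the stated semidirect product decomposition encodes.

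One step needs patching. From the triviality of finite normal subgroups of $\pi(X)$ you may only conclude that the image of $N$ in $X/K$ is trivial, i.e.\ $N\leq K$; the sentence ``the image $N/K$ of $N$ in $X/K$ is trivial, i.e.\ $N=K$'' both presupposes $K\leq N$ in its notation and asserts the equality without proving the reverse inclusion. To get $K\leq N$ you must observe that $K$ is finite: since $K\leq\ker\pi$ stabilizes $L_0$ and $X$ acts freely on the vertices, $K$ acts freely on the finite set $L_0$ (equivalently, $K\leq H=\Stab_X(L_0)$ and $|H|=m$), so $K$ is a finite normal subgroup of the virtually cyclic group $X$ and hence is contained in its largest finite normal subgroup $N$. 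All the ingredients for this are already in your last paragraph, but they are invoked after the claim $N=K$ is made; reorder (or state the finiteness of $K$ explicitly at that point) and the proof is complete.
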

\begin{proof}
Let $\Omega$ be the vertex set of $\mathcal G_m$. The hypotheses imply that $X$ acts geometrically on the $2$-ended graph $\mathcal G_m$.  Hence $X$ is finitely generated and $2$-ended, and $X$ is a cocompact   lattice in   $L = \Aut(\mathcal G_m)$. 
By the definition of $L$, its largest compact normal subgroup $W$   acts on $\Omega$  with orbits of size~$m$. Hence the conclusion follows from  Lemma~\ref{lem:env-2-ended}.
\end{proof}
	
\subsection{Reducing to  totally disconnected cocompact envelopes}

The following result follows from the work of Bader--Furman--Sauer on lattice envelopes.

\begin{prop}\label{prop:envelope->coco:general}
Let $\Lambda$ be a countable group and $H$ be a locally compact group containing $\Lambda$ as a lattice. Suppose that $\Lambda$ satisfies the following conditions:
\begin{enumerate}
\item $\Lambda$ is acylindrically hyperbolic. 
\item $\Lambda$ is linear, or $\Lambda$ is finitely generated and residually finite. 
\item $\Lambda$ is virtually torsion-free. 
\item $\Lambda$ is not relatively hyperbolic with respect to virtually nilpotent subgroups. 
\end{enumerate}
Then the identity component $H^\circ$ is compact,  and the lattice $\Lambda$ is cocompact in $H$.
\end{prop}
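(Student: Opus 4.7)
The proof is essentially a direct application of the classification of lattice envelopes from Bader--Furman--Sauer~\cite{BFS}, where each of the four hypotheses on $\Lambda$ rules out a specific exotic structural feature of $H$. The plan is as follows.

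First I would invoke the BFS structure theorem for the lattice envelope $\Lambda \leq H$. After replacing $\Lambda$ by a finite-index torsion-free subgroup (permitted by (iii)) and modding out by a compact normal subgroup of $H$, this yields a decomposition of $H$ into a connected amenable (solvable) part, a connected semisimple Lie part, and a totally disconnected part, together with information about the image of $\Lambda$ in each factor. The next step is to eliminate each non-trivial connected factor. A non-compact amenable connected normal subgroup would intersect $\Lambda$ in an infinite amenable normal subgroup of a finite-index subgroup of $\Lambda$; this is impossible by the Dahmani--Guirardel--Osin theorem applied to hypothesis~(i). A non-trivial semisimple connected factor would force $\Lambda$ to project as a lattice into a non-compact semisimple Lie group; combining linearity~(ii), virtual torsion-freeness~(iii), and Margulis/Tits-type arithmeticity arguments with acylindrical hyperbolicity~(i) rules this out, since acylindrical hyperbolicity is incompatible with being a lattice in such a group (except possibly in rank one, where further restrictions apply). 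Hence $H^\circ$ is compact.

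For cocompactness, the quotient $H/H^\circ$ is a totally disconnected locally compact group in which the image of $\Lambda$ is a lattice. A non-uniform lattice in such a group would, through the geometric model for the $H$-action supplied by~\cite{BFS}, equip $\Lambda$ with a natural peripheral structure whose stabilizers are virtually nilpotent cusp-type subgroups, making $\Lambda$ relatively hyperbolic with respect to those subgroups. Hypothesis~(iv) explicitly forbids such a structure, so $\Lambda$ must be uniform in $H$.

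I expect the main obstacle to be the verification that $\Lambda$ satisfies the precise technical running assumptions of~\cite{BFS} --- typically formulated in terms of integrability of the lattice, boundedness of certain cohomology classes, and measure-theoretic mixing --- rather than the broad-strokes deductions above. A secondary technical point is the extraction of a \emph{virtually nilpotent} peripheral structure in the cocompactness step: non-uniformity alone produces parabolic points, but the sharpening to virtually nilpotent cusp stabilizers relies on combining the geometric output of~\cite{BFS} with the linearity of $\Lambda$ inherited from~(ii) to control the shape of horospherical stabilizers.
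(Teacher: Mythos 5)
Your overall strategy --- verify the hypotheses of Bader--Furman--Sauer and run their structure theorem, ruling out the connected cases by acylindrical hyperbolicity --- is the same as the paper's, but two of your deductions have genuine gaps. The first is the rank-one semisimple case, which you leave open (``except possibly in rank one, where further restrictions apply'' is not an argument). This is precisely where hypothesis (iv) is needed: by Farb \cite{Fa98}, a uniform lattice in a rank-one simple Lie group is hyperbolic and a non-uniform one is relatively hyperbolic with virtually nilpotent cusp subgroups, so either situation contradicts (iv); in higher rank the paper argues via SQ-universality of acylindrically hyperbolic groups against Margulis' normal subgroup theorem. Relatedly, your elimination of a noncompact connected amenable factor by claiming it ``would intersect $\Lambda$ in an infinite amenable normal subgroup'' is not valid: a lattice can meet a closed normal subgroup trivially. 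In \cite{BFS} this is taken care of by the standing hypothesis (CAF), which is verified from acylindrical hyperbolicity via \cite[Thm.~1.4]{BFS}, not by an intersection argument.

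The second and more serious gap is the cocompactness step, where you use a false mechanism and the wrong hypothesis. A non-uniform lattice in a totally disconnected locally compact group does not in general inherit a relatively hyperbolic structure with virtually nilpotent peripheral subgroups, and nothing in \cite{BFS} produces one; hypothesis (iv) plays no role at this stage. In \cite[Theorem~A]{BFS} cocompactness in the totally disconnected case is a consequence of the bounded-torsion property (BT): under the standing assumptions a non-uniform lattice in such a group must contain finite subgroups of unbounded order, which is ruled out because $\Lambda$ is virtually torsion-free, i.e.\ by hypothesis (iii). The paper's proof is exactly this: verify (Irr) and (CAF) from (i), (NbC) from (ii), and (BT) from (iii) using \cite[Thm.~1.4]{BFS}, apply the trichotomy of \cite[Theorem~A]{BFS}, exclude the semisimple/$S$-arithmetic branches using (i) and (iv) as above, and read off cocompactness from (BT). Your anticipated obstacle about integrability, bounded cohomology classes and mixing is a red herring --- those are not among the hypotheses of \cite[Theorem~A]{BFS}; the real work is the four verifications just listed and the rank-one exclusion you omitted.
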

\begin{proof}
We claim that $\Lambda$ satisfies the hypotheses of  \cite[Theorem~A]{BFS}, which are denoted  (Irr), (CAF), (NbC) and (BT) in loc.~cit.
Since $\Lambda$ is acylindrically hyperbolic, the properties (Irr) and  (CAF) hold by \cite[Theorem~1.4(3) and (4$'$)]{BFS}. Since $\Lambda$ is virtually torsion-free, property (BT) holds as well. Moreover, the hypothesis that $\Lambda$ is  linear implies that property (NbC) holds by \cite[Theorem~1.4(2$'$)]{BFS}, whereas if $\Lambda$ is finitely generated and residually finite, then the same conclusion holds by \cite[Theorem~1.4(3$'$)]{BFS} since (CAF) holds by the above. 

Thus the trichotomy from \cite[Theorem~A]{BFS} applies. Suppose, by contradiction, that the identity component $H^\circ$ is not compact. Then either $\Lambda$ is   virtually isomorphic to an irreducible lattice in a connected semisimple Lie group, or to an irreducible $S$-arithmetic lattice in a product of simple Lie and algebraic groups. Since $\Lambda$ is acylindrically hyperbolic, it is SQ-universal, while irreducible lattices in products of semisimple Lie and algebraic groups of  rank~$\geq 2$ satisfy Margulis' Normal Subgroup Theorem. The only remaining possibility is that $\Lambda$ is virtually isomorphic to a lattice in a connected simple Lie group of rank~$1$. In view of  \cite[Theorem~5.1]{Fa98}, we know that (non-)uniform lattices in simple Lie group of rank~$1$ are (relatively) hyperbolic (with respect to the so-called cusp groups, which are virtually nilpotent). This is not the case of $\Lambda$ by hypothesis. This shows  that $H^\circ$ is compact.

The fact that $\Lambda$ is cocompact follows from \cite[Theorem~A]{BFS}, since $\Lambda$ satisfies (BT).
\end{proof}

The following consequence now follows from known properties of RAAGs. For a related statement  under different assumptions on $\Gamma$, we refer to \cite[Theorem~10.1]{HH23}. 

\begin{prop}\label{prop:envelope->coco:RAAG}
Let $\Gamma$ be a finite connected graph with more than one vertex. Assume that $\Gamma$  is irreducible. Let also $\Lambda \leq G(\Gamma)$ be a finite index subgroup, and let $H$ be a locally compact group containing $\Lambda$ as a lattice. Then the identity component $H^\circ$ is compact,   the lattice $\Lambda$ is cocompact in $H$, and $\Lambda$ embeds as a cocompact lattice in the totally disconnected group   $H/H^\circ$.
\end{prop}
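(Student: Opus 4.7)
The plan is to apply \cref{prop:envelope->coco:general} to $\Lambda$, which will yield that $H^\circ$ is compact and that $\Lambda$ is a cocompact lattice in $H$; the remaining assertion will then follow from a general manipulation with the compact quotient $H \to H/H^\circ$. For the first step I must verify conditions (i)--(iv) for $\Lambda$. Since each of these four conditions is invariant under commensurability (acylindrical hyperbolicity by Osin; linearity and torsion-freeness trivially by restriction to subgroups; non-relative-hyperbolicity with respect to virtually nilpotent subgroups by the quasi-isometry invariance of relative hyperbolicity proven by Dru\c{t}u), it suffices to check all four for $G(\Gamma)$ itself.

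Conditions (ii) and (iii) are standard facts about RAAGs: $G(\Gamma)$ is torsion-free, and it is linear by Hsu--Wise or Davis--Januszkiewicz. For (i), I would use that the universal cover $X(\Gamma)$ of the Salvetti complex is a CAT(0) cube complex on which $G(\Gamma)$ acts properly and cocompactly, and which is irreducible precisely because $\Gamma$ is not a join. Since $\Gamma$ has at least two vertices, $G(\Gamma)$ is not virtually cyclic; by the rank rigidity theorem of Caprace--Sageev~\cite{CapSag}, $G(\Gamma)$ then contains a rank-one isometry of $X(\Gamma)$, and hence is acylindrically hyperbolic in the sense of Osin--Dahmani--Guirardel.

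The main obstacle is (iv). Here the argument goes through the isolated flats property. A short graph-theoretic observation is needed first: under the hypotheses on $\Gamma$, there must exist vertices $s, t, u$ with $s \sim t$, $s \sim u$, and $t \not\sim u$. Indeed, otherwise every closed star $s \cup s^\perp$ is a clique in $\Gamma$; a short induction on path-length in the connected graph $\Gamma$ then shows that any two vertices of $\Gamma$ are adjacent, so that $\Gamma$ is a complete graph. But a complete graph on at least two vertices is a join, contradicting irreducibility. Given such $s,t,u$, the subgroups $\la s, t\ra$ and $\la s, u\ra$ give two distinct $2$-flats in $X(\Gamma)$ that share the full line $\la s\ra$; since $t$ and $u$ do not commute in $G(\Gamma)$, neither flat sits in any strictly larger flat of $X(\Gamma)$. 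Hence the flat collection of $X(\Gamma)$ violates the isolated flats property, and by the Hruska--Kleiner characterization of relatively hyperbolic CAT(0) groups, $G(\Gamma)$ is not relatively hyperbolic with respect to any family of virtually abelian subgroups. Since every nilpotent subgroup of the torsion-free group $G(\Gamma)$ is abelian (by the flat torus theorem applied to its CAT(0) action), this is equivalent to (iv).

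With all four hypotheses verified, \cref{prop:envelope->coco:general} yields that $H^\circ$ is compact and that $\Lambda$ is a cocompact lattice in $H$. To deduce the final assertion, I would note that $\Lambda$ is torsion-free (as a subgroup of the torsion-free group $G(\Gamma)$), so the discrete subgroup $\Lambda \cap H^\circ$ of the compact group $H^\circ$ is finite, hence trivial. The quotient map $H \to H/H^\circ$ therefore restricts to an injection $\Lambda \hookrightarrow H/H^\circ$, and a standard argument, using the compactness of the normal subgroup $H^\circ$ together with the discreteness and cocompactness of $\Lambda$ in $H$, shows that the image $\Lambda H^\circ / H^\circ$ is a discrete and cocompact subgroup of the totally disconnected locally compact group $H/H^\circ$. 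This completes the proof.
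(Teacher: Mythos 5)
Your overall strategy coincides with the paper's: verify hypotheses (i)--(iv) of \cref{prop:envelope->coco:general} for $G(\Gamma)$ (noting that they pass to the finite index subgroup $\Lambda$), apply that proposition, and then use torsion-freeness of $\Lambda$ to get $\Lambda \cap H^\circ = \{1\}$, so that $\Lambda$ injects as a cocompact lattice in $H/H^\circ$. Conditions (i)--(iii) are handled exactly as in the paper (rank rigidity of Caprace--Sageev for acylindrical hyperbolicity, Davis--Januszkiewicz for linearity, freeness of the action on $X(\Gamma)$ for torsion-freeness). The genuine divergence is condition (iv): the paper simply quotes \cite[Prop.~1.3]{BDM09}, which for $\Gamma$ connected with more than one vertex gives thickness of $G(\Gamma)$ and hence rules out relative hyperbolicity with respect to \emph{any} collection of proper subgroups, whereas you argue through isolated flats and the Hruska--Kleiner characterization. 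Your route can be made to work, but as written it contains a faulty step: from $s \sim t$, $s \sim u$, $t \not\sim u$ you assert that the two $2$-flats corresponding to $\la x_s, x_t\ra$ and $\la x_s, x_u\ra$ are maximal ``since $t$ and $u$ do not commute''. That inference is false: the flat for $\{s,t\}$ fails to be maximal as soon as some vertex $v$ is adjacent to both $s$ and $t$ (for instance $\Gamma$ with edge set $\{st, su, sv, tv\}$ satisfies your hypotheses on $(s,t,u)$, yet $\{s,t\}$ sits inside the $3$-clique $\{s,t,v\}$). The repair is easy: any maximal flat containing the first $2$-flat corresponds to a maximal clique containing $\{s,t\}$, and any one containing the second to a maximal clique containing $\{s,u\}$; since $t \not\sim u$, no clique contains both, so these maximal flats are distinct, yet both contain the line in the $x_s$-direction, and this already violates isolated flats. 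You should also address why the Hruska--Kleiner alternative (peripherals virtually abelian of rank at least $2$) covers all virtually abelian peripheral structures: virtually cyclic (hence hyperbolic) peripherals can be discarded from a relatively hyperbolic structure, and $G(\Gamma)$ itself is not hyperbolic because $\Gamma$ has an edge, so it contains $\ZZ^2$. With these patches your argument is complete; the paper's appeal to thickness avoids them and is the shorter path, while your argument has the mild virtue of staying entirely within CAT(0) cube complex technology.
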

\begin{proof}
It suffices to show that $\Lambda$ satisfies the hypotheses of Proposition~\ref{prop:envelope->coco:general}. Since all those conditions are inherited by finite index subgroups, it suffices to show that the right-angled Artin group $G(\Gamma)$ satisfies them. 

By hypothesis $G(\Gamma)$ is irreducible but not virtually cyclic. It follows 
 from the Rank Rigidity Theorem for CAT(0) cube complexes that $G(\Gamma)$ is acylindrically hyperbolic, see \cite{CapSag} and \cite[\S8(d)]{Osin}. Moveover RAAGs are linear (this well-known fact follows for example from \cite{DavJan} together with the fact that Coxeter groups are linear). Since $G(\Gamma)$ act freely on a CAT($0$) space $X(\Gamma)$, it is torsion-free. Since the graph $\Gamma$ is connected with more than one vertex, it follows from  \cite[Proposition~1.3]{BDM09} that $G(\Gamma)$ is not relatively hyperbolic.
 
Hence $H^\circ$ is compact and $\Lambda$ is cocompact. Since $\Lambda$ is torsion-free, the intersection $H^\circ \cap \Lambda$ is trivial, so that  the canonical projection maps $\Lambda$ injectively onto a lattice in  $H/H^\circ$.
\end{proof}

\begin{remark}\label{rem:envelope-product-groups}
It turns out that Proposition~\ref{prop:envelope->coco:RAAG} remains true for finitely generated groups $\Lambda$  of the form $\Lambda \cong \Lambda_1 \times \dots \times \Lambda_k$, where each direct factor $\Lambda_i$ satisfies the conditions (i), (ii), (iii), (iv) from Proposition~\ref{prop:envelope->coco:general}. This can be established by adapting the proof of \cite[Theorem~A]{BFS}. In particular Proposition~\ref{prop:envelope->coco:RAAG} remains true if $\Gamma$ is a join of  irreducible  graphs, each of which has more than one vertex. If the graph $\Gamma$ satisfies moreover (R1) and (R2), then the conclusions of Proposition~\ref{prop:envelope->coco:RAAG}  also follow from   \cite[Theorem~10.1]{HH23} even if $\Gamma$ is not irreducible. This is sufficient for our purposes; therefore, we do not include the details of proof of the generalization of Proposition~\ref{prop:envelope->coco:RAAG}  for product groups.
\end{remark}

\section{\texorpdfstring{Graph products of $2$-ended groups and their envelopes}{Graph products of 2-ended groups and their envelopes}}

\subsection{\texorpdfstring{Graph products of $2$-ended groups are commensurable with RAAGs}{Graph products of 2-ended groups  are commensurable with RAAGs}}

For each $s \in S$, let $X_s $ be a $2$-ended group. Let $\mathbf X = (X_s)_{s\in S}$ and  
$$G_\Gamma(\mathbf X) = \bigast_{s \in S} X_s \big\slash \langle \! \langle [X_s, X_t]  : s, t \in S \text{ with } \{s, t\} \in E(\Gamma)\rangle \! \rangle$$ 
be the graph product of those groups $(X_s)_{s \in S}$ associated with the graph $\Gamma$. The following result is a special case of \cite[Theorem~1]{JS01}; we include a proof as it provides a simple illustration of the use of universal groups with prescribed local actions. 

\begin{prop}\label{prop:commensurable}
The group $G_\Gamma(\mathbf X) $ is abstractly commensurable with the right-angled Artin group $G(\Gamma)$. 
\end{prop}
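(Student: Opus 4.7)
The plan is to exhibit, inside $G_\Gamma(\mathbf X)$, a finite-index normal subgroup which is isomorphic to a right-angled Artin group $G(\widehat\Gamma)$ for some finite graph $\widehat\Gamma$, and then to verify that $G(\widehat\Gamma)$ is abstractly commensurable with $G(\Gamma)$.

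First, since each $X_s$ is 2-ended, I choose an infinite cyclic subgroup $C_s \trianglelefteq X_s$ of finite index, which may be taken normal in $X_s$ by passing to the intersection of conjugates; set $F_s = X_s/C_s$, a finite group. The assignment sending $x_s \in X_s$ to $x_s C_s$ and $x_t \in X_t$ to $1$ for every $t \neq s$ defines a well-formed homomorphism $\pi_s \colon G_\Gamma(\mathbf X) \to F_s$: the only non-trivial relations to check are the commutation relations $[X_s, X_t] = 1$ for edges $\{s,t\}$ of $\Gamma$ with $s \neq t$, and these are preserved because the $X_t$-factor is sent to the identity. Combining over $s$, one obtains $\pi \colon G_\Gamma(\mathbf X) \twoheadrightarrow \prod_s F_s$ with finite-index kernel $K$.

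Next, identifying $G_\Gamma(\mathbf X) \cong \mathcal U^\lambda_{\mathcal B}(\mathbf X)$ via Proposition~\ref{prod:graph-products-are-univ-groups}(i), the explicit formula for local actions from the proof of that proposition (the local action at each $s$-panel is left-multiplication by $\rho_s(g) \in X_s$) shows that $\pi_s$ coincides with $q_s \circ \rho_s$ on $G_\Gamma(\mathbf X)$, where $q_s \colon X_s \to F_s$ is the quotient. Consequently, $K$ acts freely on the chambers of $\mathcal B$, and an analysis of its orbit structure, combined with a Reidemeister--Schreier decomposition of the defining presentation of $G_\Gamma(\mathbf X)$, identifies $K$ as a graph product of infinite cyclic groups over an explicit finite graph $\widehat\Gamma$; geometrically, $\widehat\Gamma$ is obtained from $\Gamma$ by a blow-up encoding the orbit structure of $\prod_s F_s$.

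Finally, $G(\widehat\Gamma)$ is commensurable with $G(\Gamma)$: the passage from $\Gamma$ to $\widehat\Gamma$ corresponds to a finite cover of the Salvetti complex, and the two RAAGs share a common finite-index subgroup. The main obstacle in this plan is the combinatorial identification of $K$ as a RAAG over $\widehat\Gamma$; the simplest illustrative case is $\Gamma = \overline{K_2}$ with $X_1 = X_2 = D_\infty$, where direct computation (or an Euler-characteristic count) gives $K \cong F_5$, sitting with index $4$ inside both $D_\infty * D_\infty$ and $F_2 = G(\Gamma)$, thereby exhibiting a concrete common finite-index subgroup.
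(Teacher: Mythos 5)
Your setup (the retractions $\pi_s$, the finite-index kernel $K$ with $K\cap X_s=C_s$) is fine, but the two claims that carry the whole argument are exactly the ones you do not prove, and neither is innocuous. First, the identification of $K$ as a graph product of infinite cyclic groups over some graph $\widehat\Gamma$ is not a routine Reidemeister--Schreier exercise: outside the free-product and direct-product cases there is no Kurosh-type theorem for graph products that would hand you such a structure, and finite-index subgroups of graph products of $2$-ended groups need not be graph products of cyclic groups at all. Even your own test case shows the proposed shape of $\widehat\Gamma$ is wrong: for $\Gamma=\overline{K_2}$ with $X_1=X_2=D_\infty$ the Kurosh decomposition of $K$ consists of four conjugates of the $C_s$ together with a free factor of rank one that is not conjugate into any vertex group, so $\widehat\Gamma$ is not a vertex-by-vertex ``blow-up encoding the orbit structure of $\prod_s F_s$''. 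Second, even granting $K\cong G(\widehat\Gamma)$, the final step is circular: $K$ is a finite-index subgroup of $G_\Gamma(\mathbf X)$, not of $G(\Gamma)$, so there is no finite cover of the Salvetti complex $X(\Gamma)$ available, and the assertion that $G(\widehat\Gamma)$ is commensurable with $G(\Gamma)$ is essentially the statement you are trying to prove. It cannot be justified by a general principle about blow-ups either, since blowing up a graph typically changes the commensurability (even the quasi-isometry) class of the associated RAAG --- blowing up one vertex of a single edge turns $\mathbf Z^2$ into $F_2\times\mathbf Z$.

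The paper's proof avoids both difficulties by never identifying the isomorphism type of the common finite-index subgroup. It realizes $G_\Gamma(\mathbf X)\cong\mathcal U^\lambda_{\mathcal B}(\mathbf X)$ and $G(\Gamma)\cong\mathcal U^\lambda_{\mathcal B}(\mathbf F)$ as universal groups acting chamber-regularly on the same building $\mathcal B$, where $T_s\leq X_s$ is an infinite cyclic subgroup of finite index and $F_s\leq\Sym(\Omega_s)$ is an infinite cyclic group acting regularly and containing $T_s$ with finite index. Then $\mathcal U(\mathbf T)$ sits inside both $\mathcal U(\mathbf X)$ and $\mathcal U(\mathbf F)$, and it has finite index in each simply because it has finitely many orbits of chambers (by \cite[Proposition~3.1]{BDM21}) while the two ambient groups act regularly. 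If you want to salvage your approach, you would need an independent proof of the structure of $K$ and of the commensurability $G(\widehat\Gamma)\sim G(\Gamma)$; as written, the argument has a genuine gap at both of these points.
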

\begin{proof}
For each $s$, let   $\Omega_s$ be a  countably infinite set. Let us fix a regular action of $X_s$ on the set $\Omega_s$ and use this action to view $X_s$ as a subgroup of $\Sym(\Omega_s)$. It then follows from Proposition~\ref{prod:graph-products-are-univ-groups} that the discrete group $G_\Gamma(\mathbf X)$ is isomorphic to the universal group $\mathcal U(\mathbf X)$.

Since $X_s$ is $2$-ended, it contains an infinite cyclic subgroup $T_s$ that centralizes the largest finite normal subgroup of $X_s$. The group $T_s$ has finite index in $X_s$; hence it acts freely with finitely many orbits on $ \Omega_s$. Let $n_s$ be the number of $T_s$-orbits. Then there exists a cyclic group $F_s \leq \Sym(\Omega_s)$ acting regularly on $\Omega_s$, and  containing $T_s$ as subgroup of index~$n_s$. Set $\mathbf T = (T_s)_{s \in S}$ and $\mathbf F = (F_s)_{s \in S}$. 

By \cref{prod:graph-products-are-univ-groups}, the universal group $\mathcal U(\mathbf F)$ is isomorphic to $G(\Gamma)$. By \cite[Lemma~3.3]{BDM21}, we have $\mathcal U(\mathbf T) \leq \mathcal U(\mathbf F) \cap \mathcal U(\mathbf X)$. Both groups $\mathcal U(\mathbf F)$ and $\mathcal U(\mathbf X)$ act regularly on the chambers of $\mathcal B$. Moreover, since $T_s$ has finitely many orbits on $\Omega_s$ for all $s$, it follows from \cite[Proposition~3.1]{BDM21} that $\mathcal U(\mathbf T)$ has finitely many orbits of chambers on the building $\mathcal B$. It follows that $\mathcal U(\mathbf T)$ has finite index in $\mathcal U(\mathbf F)$ and in $\mathcal U(\mathbf X)$. This confirms that the latter two groups are commensurable.  
\end{proof}

Observe that if the $2$-ended group $X_s$ is infinite dihedral for each $s$, then $G_\Gamma(\mathbf X)$ is a right-angled Coxeter group. In particular, Proposition~\ref{prop:commensurable} directly implies the following well-known result of Davis--Januszkiewicz:

\begin{cor}[\cite{DavJan}]\label{cor:DavJan}
Every right-angled Artin group is commensurable with a right-angled Coxeter group. 
\end{cor}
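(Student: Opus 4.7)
The plan is to derive the corollary by a direct application of Proposition~\ref{prop:commensurable} with a carefully chosen tuple of $2$-ended groups. Specifically, for each vertex $s \in S$, I would take $X_s = D_\infty$, the infinite dihedral group, and set $\mathbf X = (X_s)_{s \in S}$. Proposition~\ref{prop:commensurable} then yields that the graph product $G_\Gamma(\mathbf X)$ is commensurable with the right-angled Artin group $G(\Gamma)$. Since commensurability is an equivalence relation, it remains only to exhibit $G_\Gamma(\mathbf X)$ as a right-angled Coxeter group.

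For this, I would use the presentation $D_\infty = \langle a_s, b_s \mid a_s^2 = b_s^2 = 1 \rangle$ for each $s$. Let $\Gamma'$ be the graph with vertex set $S' = \{a_s, b_s : s \in S\}$, where we declare $a_s$ and $b_s$ to be non-adjacent for each $s$, and we declare each of $a_s, b_s$ to be adjacent to each of $a_t, b_t$ whenever $s \sim t$ in $\Gamma$. Combining the $D_\infty$-presentations with the defining commutation relations of the graph product then gives exactly the standard presentation of the right-angled Coxeter group $W(\Gamma')$. Hence $G_\Gamma(\mathbf X) \cong W(\Gamma')$, and by the preceding paragraph $G(\Gamma)$ is commensurable with $W(\Gamma')$, as required.

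There is no real obstacle here: the work has already been done in Proposition~\ref{prop:commensurable}, and the only additional verification is the routine identification of the graph product of infinite dihedral groups over $\Gamma$ with the right-angled Coxeter group over the ``doubled'' graph $\Gamma'$, which is immediate from the two compatible presentations.
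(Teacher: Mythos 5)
Your proposal is correct and follows essentially the same route as the paper: the paper also observes that taking each $X_s$ to be infinite dihedral makes the graph product $G_\Gamma(\mathbf X)$ a right-angled Coxeter group, and then applies Proposition~\ref{prop:commensurable}. Your explicit description of the doubled graph $\Gamma'$ just spells out the identification that the paper leaves as an observation.
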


We now construct a larger envelope for  $G_\Gamma(\mathbf X) $ by passing from discrete $2$-ended  groups to their canonical envelopes  on the level of the local action. In view of Proposition~\ref{prod:graph-products-are-univ-groups}, we may  identify  $G_\Gamma(\mathbf X)$ with the universal group $\mathcal U^\lambda_{\mathcal B}(\mathbf F) \leq \Aut(\mathcal B)$, where $\lambda$ is the regular colouring from Lemma~\ref{lem:legal-colouring}.

\begin{prop}\label{prop:LattEnv}
For each $s \in S$, let  $X_s \leq \Sym(\Omega_s)$ be a $2$-ended group acting regularly on $\Omega_s$. Let also $Y_s, Y'_s \leq \Sym(\Omega_s)$ be its canonical envelopes, and let $L_s \in \{Y_s, Y'_s\}$ (in particular, we must have $L_s = Y_s$ if the quotient of $X_s$ by its largest finite normal subgroup is cyclic). Set $\mathbf X = (X_s)_{s \in S}$ and $\mathbf L = (L_s)_{s \in S}$. 

Then the group $\mathcal U(\mathbf L) = \mathcal U^\lambda_{\mathcal B}(\mathbf L)$  is a tdlc group containing   $G_\Gamma(\mathbf X)=  \mathcal U^\lambda_{\mathcal B}(\mathbf X)$ as a cocompact lattice. Moreover, $\mathcal U(\mathbf L)$ is non-discrete unless $\Gamma$ is the complete graph. 
\end{prop}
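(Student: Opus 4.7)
The proposition packages three assertions — that $\mathcal U(\mathbf L)$ is tdlc, that $G_\Gamma(\mathbf X) \cong \mathcal U(\mathbf X)$ sits inside as a cocompact lattice, and that $\mathcal U(\mathbf L)$ is non-discrete unless $\Gamma$ is complete — and I would dispatch them in turn. The first two are quick: $\mathcal U(\mathbf L)$ is a closed subgroup of $\Aut(\mathcal B)$, with chamber stabilizers that are compact and open (openness in the permutation topology is automatic, while compactness follows because each local point stabilizer $(L_s)_\omega$ is the compact vertex stabilizer of $\Aut(\mathcal G_n)$ acting on a locally finite graph, and the chamber stabilizer in $\mathcal U(\mathbf L)$ is assembled from these via an inverse limit over larger and larger balls). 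For the cocompact lattice claim, Proposition~\ref{prod:graph-products-are-univ-groups}(i) identifies $G_\Gamma(\mathbf X)$ with $\mathcal U(\mathbf X)$; monotonicity of universal groups in the local actions (\cite[Lemma~3.3]{BDM21}) combined with $X_s \leq L_s$ yields $\mathcal U(\mathbf X) \leq \mathcal U(\mathbf L)$; and since $\mathcal U(\mathbf X)$ acts regularly on chambers, the chamber stabilizer $K := \Stab_{\mathcal U(\mathbf L)}(c_0)$ gives a strict factorization $\mathcal U(\mathbf L) = \mathcal U(\mathbf X) \cdot K$ with $\mathcal U(\mathbf X) \cap K = \{1\}$. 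As $K$ is compact and open, $\mathcal U(\mathbf X)$ is discrete and $\mathcal U(\mathbf L)/\mathcal U(\mathbf X) \cong K$ is compact, completing the lattice claim.

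The meat of the proposition is the non-discreteness assertion. Assume $\Gamma$ is not complete and pick distinct non-adjacent $s,t \in S$. Since $X_s$ is a proper subgroup of $L_s$ with $X_s$ acting regularly and $L_s$ acting transitively on $\Omega_s$, the point stabilizer $(L_s)_c$ is non-trivial for every $c \in \Omega_s$ — easy from the explicit form of the canonical envelopes $L_s \cong (\prod_{\mathbf Z}\Sym(n))\rtimes D_\infty$, or $L_s \cong D_\infty$ when $n=1$. To prove non-discreteness of $\mathcal U(\mathbf L)$ it suffices to show that every finite $F \subseteq \mathcal B$ has a non-trivial pointwise stabilizer in $\mathcal U(\mathbf L)$. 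Fix such $F$ and let $\mathcal R$ be the $\{s,t\}$-residue through some chamber of $F$; because $s,t$ are non-adjacent, $\mathcal R$ carries the combinatorial structure of a biregular tree of valencies $|\Omega_s|,|\Omega_t|\geq 2$. The projection $\proj_{\mathcal R}(F)$ is a finite subset of the infinite tree $\mathcal R$, so I can select an $s$-panel $\mathcal P \subseteq \mathcal R$ and a chamber $c \in \mathcal P$ such that all of $\proj_{\mathcal R}(F)$ lies in the $\mathcal P$-branch through $c$. The gate property for nested residues gives $\proj_{\mathcal P} = \proj_{\mathcal P}\circ\proj_{\mathcal R}$, hence $\proj_{\mathcal P}(F) = \{c\}$. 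Picking any non-trivial $\sigma \in (L_s)_c$ and invoking \cite[Proposition~3.8]{BDM21}, I obtain $g \in \mathcal U(\mathbf L)$ with local action $\sigma$ at $\mathcal P$ that fixes pointwise the wing $\{d \in \mathcal B : \proj_{\mathcal P}(d) = c\}$, and therefore $F$; such a $g$ is non-trivial because its local action at $\mathcal P$ is.

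The only genuinely delicate step is the panel-selection argument: producing an $s$-panel whose projection squashes the prescribed finite set $F$ to a single chamber. The reduction to the tree $\mathcal R$ via the gate property converts this into an elementary statement about infinite trees with branching, so the non-adjacency of $s$ and $t$ — which is exactly what makes $\mathcal R$ a tree rather than a product of rank-one residues — is the essential hypothesis making the whole argument work.
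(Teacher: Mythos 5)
Your argument is correct. The first two assertions are handled exactly as in the paper: the identification $G_\Gamma(\mathbf X)\cong\mathcal U^\lambda_{\mathcal B}(\mathbf X)$ via Proposition~\ref{prod:graph-products-are-univ-groups}, the inclusion $\mathcal U(\mathbf X)\leq\mathcal U(\mathbf L)$ via monotonicity (\cite[Lemma~3.3]{BDM21}), and the strict factorization $\mathcal U(\mathbf L)=\mathcal U(\mathbf X)K$ with $K$ a compact open chamber stabilizer; your justification of local compactness (closedness plus compactness of chamber stabilizers coming from compactness of the point stabilizers in $L_s\cong\Aut(\mathcal G_m)$) is a sketch of what the paper simply cites as \cite[Prop.~3.11]{BDM21}, and the ``inverse limit over balls'' phrasing would be cleaner if stated as: the chamber stabilizer is closed and has finite orbits on chambers at every gallery distance, hence is compact in the permutation topology. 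Where you genuinely diverge is the non-discreteness claim. The paper disposes of it by citing \cite[Prop.~3.14]{BDM21}, together with a footnote repairing the hypotheses of that proposition (the non-spherical assumption is needed, and the footnote records the precise statement: if $L_s$ does not act freely and some $t$ does not commute with $s$, then $\mathcal U(\mathbf L)$ is non-discrete). Your argument is in effect a direct proof of exactly that footnoted claim: choose non-adjacent $s,t$, use that the $\{s,t\}$-residue is a tree of infinite diameter to find an $s$-panel $\mathcal P$ with $\proj_{\mathcal P}(F)=\{c\}$ (via the gate property for nested residues), and extend a non-trivial element of the point stabilizer of $L_s$ by the identity on the wing of $c$ using \cite[Prop.~3.8]{BDM21} --- the same extension tool the paper itself uses in the proof of Proposition~\ref{prop:+-equal-ddagger}. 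What your route buys is self-containedness and independence from the imprecisely stated reference; what the paper's route buys is brevity. Both are sound.
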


\begin{proof}
That $\mathcal U(\mathbf L) $ is a  tdlc group follows from \cite[Propositions~3.11]{BDM21}. 
This also ensures that the stabilizer of every chamber is a compact open subgroup of $\mathcal U(\mathbf F) $. 

Moreover, by definition the permutation group $L_s$ does not act freely on $\Omega_s$. Therefore, as soon as $\mathcal B$ is of non-spherical type (i.e., the graph $\Gamma$ is not the complete graph), it follows from \cite[Propositions~3.14]{BDM21}  that $\mathcal U(\mathbf L) $ is non-discrete.
(The hypothesis that $\Gamma$ is non-spherical is not explicitly stated in \cite[Propositions~3.14]{BDM21}, but it is indeed necessary. It is used in the proof of that proposition through \cite[Lemma~3.6]{BDM21}. More precisely, if $L_s$ does not act freely on $\Omega_s$ and some $t \in S$ does not commute with $s$, then $\mathcal U(\mathbf L)$ is non-discrete. If $\Gamma$ is non-spherical, then this holds for \emph{some} $s \in S$.)

Since $X_s \leq L_s$ for all $s$, we have $\mathcal U(\mathbf X) = \mathcal U^\lambda_{\mathcal B}(\mathbf X) \leq \mathcal U(\mathbf L) $ by \cite[Lemma~3.3]{BDM21}. Since $\mathcal U(\mathbf X)$ acts regularly on $\mathcal B$, we have a strict factorization $\mathcal U(\mathbf L)  = \mathcal U(\mathbf X)  K$, where $K$ is the stabilizer of a chamber in $\mathcal U(\mathbf L) $. Since $K$ is a compact open subgroup, it directly follows that $\mathcal U(\mathbf X)$ is a cocompact lattice in $\mathcal U(\mathbf L)$.
\end{proof}

In the special case where $X_s$ is infinite cyclic for each $s$, we have $G_\Gamma(\mathbf X)  = G(\Gamma)$ and $L_s$ is then the infinite dihedral group for each $s$, with its standard action on $\mathbf Z$ by automorphisms. In that case the group $\mathcal U(\mathbf  L)$ from Proposition~\ref{prop:LattEnv} is called the \textbf{universal group with infinite dihedral local action} associated with the right-angled building $\mathcal B$.  
%
%

\subsection{From lattice envelopes to universal groups}

The following straightforward observation will allow us to invoke Proposition~\ref{prop:envelope->coco:RAAG}.
\begin{lem}\label{lem:connected}
If $\Gamma$ satisfies (R1) and (R2), then $\Gamma$ is connected. 
\end{lem}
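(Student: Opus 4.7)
The plan is to argue by contrapositive: assume that $\Gamma$ is disconnected, and derive a contradiction from the conjunction of (R1) and (R2). Let $C_1, C_2, \dots$ be the connected components of $\Gamma$, and suppose there are at least two.

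First I would use (R1) to pin down the structure of $\Gamma$. Fix any $s \in C_1$. Since $s^\perp$ consists of neighbors of $s$, we have $s \cup s^\perp \subseteq C_1$. Therefore the induced subgraph on $S \setminus (s \cup s^\perp)$ decomposes as the disjoint union (with no edges between the pieces in $\Gamma$) of $C_1 \setminus (s \cup s^\perp)$ together with $C_2, C_3, \dots$. For this subgraph to be connected, as (R1) demands, at most one of these pieces can be non-empty. Since by assumption there exists at least one other component $C_2$, we conclude that $\Gamma$ has exactly two connected components $C_1, C_2$, and that $C_1 = s \cup s^\perp$ for every $s \in C_1$; that is, $C_1$ is a clique. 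Interchanging the roles, $C_2$ is a clique as well.

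Second I would use (R2) to rule out both remaining possibilities. If $|C_1| \geq 2$, pick any two distinct vertices $s, t \in C_1$; since $C_1$ is a clique we have $s^\perp = C_1 \setminus \{s\}$ and $t \cup t^\perp = C_1$, hence $s^\perp \subseteq t \cup t^\perp$ with $s \neq t$, contradicting (R2). The same argument applies with $C_2$ in place of $C_1$. The only remaining option is $|C_1| = |C_2| = 1$, say $C_1 = \{s\}$ and $C_2 = \{t\}$ with $s \neq t$; but then $s^\perp = \emptyset \subseteq t \cup t^\perp$, again contradicting (R2). In every case we reach a contradiction, so $\Gamma$ must have been connected.

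The argument is short and purely combinatorial; there is no real obstacle beyond organizing the case split cleanly according to whether the two cliques have one or more vertices. The only minor subtlety is the observation that (R1) forces the complement of a closed neighborhood to be connected, which is what rigidly reduces a disconnected graph to a disjoint union of two cliques before (R2) is brought in to finish.
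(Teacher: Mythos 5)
Your proof is correct and takes essentially the same route as the paper: assume disconnectedness, use (R1) to force clique structure (you show both components are cliques and that there are exactly two; the paper shows the complement $S \setminus S_1$ of a fixed component $S_1$ is a clique), and then apply (R2) to rule out the clique, including the degenerate singleton case via $s^\perp = \emptyset$. The only difference is cosmetic: you apply (R1) at vertices inside a component, while the paper applies it to a non-adjacent pair outside the fixed component, but the case analysis is the same.
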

\begin{proof}
Let $S = S_1 \cup S_2$ be a partition of the vertex such that $S_1$ is a connected component of $\Gamma$.   If two vertices $s, t \in S_2$ are not adjacent, then the complement of $\{s\} \cup s^\perp$ has at least two connected components, one containing  $t$ and the other being $S_1$. Thus $S_2$ is a clique by (R1). Then (R2) implies that $S_2$ contains at most one element.  In that case, if $s \in S_2$, then $s^\perp$ is empty,  so (R2) fails unless $S_2$ is empty. This confirms that $S = S_1$. 
\end{proof}

The following shows that, under suitable assumptions on $\Gamma$, the lattice envelopes afforded by  Proposition~\ref{prop:LattEnv} are universal. 

\begin{thm}\label{thm:Main-latt-env}
Let $\Gamma$ be a finite graph,  satisfying  the conditions (R1), (R2) and (R3). Suppose that there is no vertex $s \in S$ such that $S \subset s \cup s^\perp$ (i.e. the complement of $\Gamma$ has no isolated node). 
Let $G_\Gamma(\mathbf X)$ be a graph product of discrete $2$-ended groups. Let $\mathcal B$ be the regular right-angled building of type $\Gamma$ and countably infinite thickness. For each $s$, let $\Omega_s$ be a set with a regular action of $X_s$ and $\lambda \colon \mathcal B \to \prod_{s \in S} \Omega_s$ be the legal colouring afforded by Lemma~\ref{lem:legal-colouring}.

For each  locally compact group $H$ containing a lattice isomorphic to $G_\Gamma(\mathbf X)$,   there is a continuous homomorphism 
$$\alpha \colon H \to \widetilde{\Aut(\mathcal B)} \cong \Aut(\mathcal B) \rtimes \Aut(\Gamma)$$ 
with compact kernel and closed image, 
whose restriction to $G_\Gamma(\mathbf X)$ is an isomorphism  $ G_\Gamma(\mathbf X) \to \mathcal U^\lambda_{\mathcal B}(\mathbf X) $, 
such that 
$$ U^\lambda_{\mathcal B}(\mathbf X)  \leq 
 \alpha(H)
\leq \mathcal U^{\lambda}_{\mathcal B}(\mathbf L) \rtimes \Aut(\Gamma),$$ where $\mathbf L = (L_s)_{s \in S}$  and for each $s$, the group $L_s \leq \Sym(\Omega_s)$ is a canonical envelope of the $2$-ended group $X_s \leq \Sym(\Omega_s)$ (see~\cref{lem:env-2-ended}).
\end{thm}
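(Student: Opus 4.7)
The plan is to assemble all the preceding machinery in five conceptual steps; most of the work has been prepared in earlier sections.

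First, I reduce to the totally disconnected cocompact case. By Lemma~\ref{lem:connected} the conditions (R1) and (R2) force $\Gamma$ to be connected; combined with the hypothesis that the complement of $\Gamma$ has no isolated vertex, this puts us in a situation where Proposition~\ref{prop:envelope->coco:RAAG} (directly if $\Gamma$ is irreducible, or using the product extension alluded to in Remark~\ref{rem:envelope-product-groups} otherwise), applied to the finite-index torsion-free RAAG subgroup of $G_\Gamma(\mathbf X)$ afforded by Proposition~\ref{prop:commensurable}, yields that $H^\circ$ is compact and that $G_\Gamma(\mathbf X)$ is a cocompact lattice in $H$. In particular $H$ is quasi-isometric to $G_\Gamma(\mathbf X)$, hence to $G(\Gamma)$, and $H/H^\circ$ is totally disconnected.

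Second, I invoke Huang--Kleiner via Theorem~\ref{thm:HK}: $H$ acts coboundedly by automorphisms on $\mathscr C(\mathcal B)$, where $\mathcal B$ is the regular right-angled building of type $\Gamma$ of countably infinite thickness. By (R3) and Lemma~\ref{lem:autom-chamber-graph}, $\Aut(\mathscr C(\mathcal B)) \cong \widetilde{\Aut(\mathcal B)} \cong \Aut(\mathcal B) \rtimes \Aut(\Gamma)$, and by Proposition~\ref{prop:continuity} the resulting homomorphism $\alpha\colon H\to \widetilde{\Aut(\mathcal B)}$ is continuous, proper (hence closed image), with compact kernel containing $H^\circ$. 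Since the $G_\Gamma(\mathbf X)$-action on $\mathcal B$ that underlies the HK construction is the standard regular action, and since any two regular actions of the same group on the chambers of $\mathcal B$ are conjugate inside $\widetilde{\Aut(\mathcal B)}$, I may post-compose $\alpha$ with an inner automorphism so that the restriction to $G_\Gamma(\mathbf X)$ coincides with the standard embedding onto $\mathcal U^\lambda_{\mathcal B}(\mathbf X) \leq \Aut(\mathcal B)$.

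Third, I pin down the location of $\alpha(H)$. Applying Proposition~\ref{prod:graph-products-are-univ-groups}(ii) to $\alpha(H)\cap \Aut(\mathcal B)$ (which contains $\mathcal U^\lambda_{\mathcal B}(\mathbf X)$) yields $\alpha(H) \cap \Aut(\mathcal B) \leq \mathcal U^\lambda_{\mathcal B}(\mathbf F)$, where $F_s \leq \Sym(\Omega_s)$ is the local action at the base $s$-panel $\mathcal P_s$ of its stabilizer in $\alpha(H)\cap \Aut(\mathcal B)$; since the quotient of $\alpha(H)$ by $\alpha(H)\cap \Aut(\mathcal B)$ is contained in $\Aut(\Gamma)$, we obtain $\alpha(H) \leq \mathcal U^\lambda_{\mathcal B}(\mathbf F) \rtimes \Aut(\Gamma)$. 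Now I analyse each $F_s$: the panel stabilizer $\Stab_{\alpha(H)\cap\Aut(\mathcal B)}(\mathcal P_s)$ is open, and its intersection with the cocompact lattice $\mathcal U^\lambda_{\mathcal B}(\mathbf X)$ is $X_s$ acting regularly on $\mathcal P_s\cong\Omega_s$; consequently $X_s$ is a cocompact lattice in this stabilizer. The local action homomorphism onto $F_s$ has compact kernel (the pointwise fixator of $\mathcal P_s$), so Lemma~\ref{lem:env-2-ended} furnishes a permutation $\sigma_s \in \Sym(\Omega_s)$, equal to the identity when $X_s/N_s$ is cyclic, normalizing $X_s$ and satisfying $\sigma_s F_s \sigma_s^{-1} \leq L_s$.

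Fourth, I assemble the $(\sigma_s)_{s\in S}$ into a single $t \in \Aut(\mathcal B)$ normalizing $\mathcal U^\lambda_{\mathcal B}(\mathbf X)$. Each $\sigma_s$ acts by conjugation as an automorphism of $X_s$; because the generators of distinct factors $X_s$ and $X_t$ either commute (when $st\in E(\Gamma)$) or are unconstrained, the tuple $(\sigma_s)_{s\in S}$ induces a well-defined automorphism $\phi$ of the graph product $G_\Gamma(\mathbf X)$. Since $\mathcal U^\lambda_{\mathcal B}(\mathbf X) \cong G_\Gamma(\mathbf X)$ acts regularly on $\mathcal B$, fixing a base chamber $c_0$ lets us transport $\phi$ to a permutation $t$ of the chamber set via $t(g.c_0) = \phi(g).c_0$; one checks that $t$ preserves the $s$-adjacency for each $s$ (since $\phi$ preserves the subgroup $X_s$), hence $t \in \Aut(\mathcal B)$, and by construction $t g t^{-1} = \phi(g)$ for all $g \in \mathcal U^\lambda_{\mathcal B}(\mathbf X)$, so $t$ normalizes this subgroup, and the local action of $t$ at $\mathcal P_s$ is precisely $\sigma_s$. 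Conjugation by $t$ therefore replaces each $F_s$ by $\sigma_s F_s \sigma_s^{-1} \leq L_s$, yielding $\mathcal U^\lambda_{\mathcal B}(\mathbf X) \leq t\alpha(H) t^{-1} \leq \mathcal U^\lambda_{\mathcal B}(\mathbf L) \rtimes \Aut(\Gamma)$ as required. The main obstacles are verifying that $\alpha|_{G_\Gamma(\mathbf X)}$ can be normalized to the standard embedding (Step 2), and the bookkeeping needed to show the $\sigma_s$'s glue coherently into an element of $\Aut(\mathcal B)$ (Step 4), which relies crucially on the standard action of $\mathcal U^\lambda_{\mathcal B}(\mathbf X)$ being regular on chambers.
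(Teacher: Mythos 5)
The overall architecture of your argument matches the paper's (reduce to a cocompact totally disconnected envelope, apply Huang--Kleiner plus \cref{prop:continuity} and (R3) to land in $\widetilde{\Aut(\mathcal B)}$, analyse panel stabilizers via \cref{lem:env-2-ended}, then conjugate into $\mathcal U^\lambda_{\mathcal B}(\mathbf L)\rtimes\Aut(\Gamma)$), but there is a genuine gap at the pivotal point of Step 2. You assert that ``the $G_\Gamma(\mathbf X)$-action on $\mathcal B$ that underlies the HK construction is the standard regular action'' and, as a fallback, that any two regular actions of $G_\Gamma(\mathbf X)$ on the chambers are conjugate in $\widetilde{\Aut(\mathcal B)}$. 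Neither claim is justified, and the first is exactly what has to be proved: \cref{thm:HK} only produces \emph{some} continuous proper cobounded action of $H$ on $\mathscr C(\mathcal B)$, obtained by straightening a quasi-action coming from a quasi-isometry $H\to G(\Gamma)$; a priori its restriction to the lattice $G_\Gamma(\mathbf X)$ is merely proper and cobounded, not chamber-regular, so your conjugation trick has nothing to bite on (and the conjugacy statement for regular actions is itself a nontrivial rigidity claim you do not prove). The paper closes this gap by a specific argument: using \cref{prop:commensurable} it builds a \emph{bijective}, $\mathcal U(\mathbf T)$-equivariant quasi-isometry $\phi\colon G_\Gamma(\mathbf X)\to G(\Gamma)$ through the common finite-index subgroup $\mathcal U(\mathbf T)$, and then invokes the \emph{uniqueness} clause of Huang--Kleiner's Theorem~1.3 to conclude that $\alpha|_{G_\Gamma(\mathbf X)}$ literally coincides with the standard regular action $\mathcal U^\lambda_{\mathcal B}(\mathbf X)$. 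Without that step you cannot assert in Step 3 that $\Stab_{G_\Gamma(\mathbf X)}(\mathcal P_s)\cong X_s$ acts regularly on $\mathcal P_s$, which is precisely the hypothesis needed to apply \cref{lem:env-2-ended}.

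Your Step 4 also diverges from the paper and is slightly off as written. The building automorphism $t$ you construct from the graph-product automorphism induced by the conjugations $x\mapsto \sigma_s x\sigma_s^{-1}$ has local action at each $s$-panel equal to that conjugation automorphism of $X_s$, \emph{not} to the permutation $\sigma_s$ of $\Omega_s\cong X_s$; the two differ by a right translation by $\sigma_s(e)$, so the containment $tF_st^{-1}\leq L_s$ does not follow from $\sigma_sF_s\sigma_s^{-1}\leq L_s$ unless $\sigma_s$ fixes the base colour. This is repairable (replace $\sigma_s$ by $x_s\sigma_s$ with $x_s\in X_s$ chosen so that the base point is fixed, which changes nothing since $X_s\leq L_s$ normalizes $X_s$ and $L_s$), but you should say so. The paper avoids the issue altogether by recolouring ($\lambda'_s=\sigma_s\circ\lambda_s$) and quoting \cite[Proposition~2.44]{DMSS} to produce $t$ with $\lambda'=\lambda\circ t$, which is cleaner and is the route you would do well to follow or at least address.
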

\begin{proof}
In view of Lemma~\ref{lem:connected}, we may invoke Proposition~\ref{prop:envelope->coco:RAAG} (see Remark~\ref{rem:envelope-product-groups} in case $\Gamma$ is not irreducible). This ensures that  $H^\circ$ is compact and that $G_\Gamma(\mathbf X)$ is cocompact in $H$. In particular, $H$ is quasi-isometric to $G_\Gamma(\mathbf X)$. 

Recall from Proposition~\ref{prod:graph-products-are-univ-groups} that $G_\Gamma(\mathbf X)$ is isomorphic to the universal group $\mathcal U^\lambda_{\mathcal B}(\mathbf X) $, where $\lambda$ is the legal colouring afforded by Lemma~\ref{lem:legal-colouring}. We identify these groups.  We also identify $G(\Gamma) $ and $ \mathcal U^\lambda_{\mathcal B}(\mathbf X') $, where 
 for each $s \in S$, the group $X'_s \leq \Sym(\Omega_s)$  is infinite cyclic acting regularly  on $\Omega_s$. 
By (the proof of) Proposition~\ref{prop:commensurable}, the groups $G_\Gamma(\mathbf X) = \mathcal U^\lambda_{\mathcal B}(\mathbf X) $ and $G(\Gamma)= \mathcal U^\lambda_{\mathcal B}(\mathbf X') $ have a common finite index subgroup $\mathcal U(\mathbf T)$. Both groups act regularly on $\mathcal B$. We fix a base chamber $c_0 \in \mathcal B$, and we let   $\phi \colon G_\Gamma(\mathbf X) \to G(\Gamma)$ be the unique map such that $g.c_0 = \phi(g).c_0$ for all $g \in  G_\Gamma(\mathbf X) $. Thus $\phi$ is a bijection that restricts to the identity  on $\mathcal U(\mathbf T)$. Moreover,  $\phi$ is left-$\mathcal U(\mathbf T)$-equivariant. It follows that  $\phi$ is a bijective quasi-isometry. 

In particular, $H$ is quasi-isometric to $G(\Gamma)$. 
We now  invoke Theorem~\ref{thm:HK} and Proposition~\ref{prop:continuity}, which afford a continuous homomorphism with compact kernel $\alpha \colon H \to  \Aut(\mathscr C(\mathcal B))$. 
Since $\Gamma$ satisfies (R3), we deduce from Proposition~\ref{prop:types} that $\alpha$ takes its values in $\widetilde{\Aut(\mathcal B)} \cong \Aut(\mathcal B) \rtimes \Aut(\Gamma)$. 

Since the quasi-isometry $\phi \colon G_\Gamma(\mathbf X) \to G(\Gamma)$ is bijective, it follows from the uniqueness part in \cite[Theorem~1.3]{HK18} that the restriction of $\alpha$ to $G_\Gamma(\mathbf X) =  \mathcal U^\lambda_{\mathcal B}(\mathbf X) $ coincides the regular action of that group on $\mathcal B$. In particular, the image $\alpha(H)$ contains $\mathcal U^\lambda_{\mathcal B}(\mathbf X) $, hence it is contained in a universal group $\mathcal U^\lambda_{\mathcal B}(\mathbf Y) $ by Proposition~\ref{prod:graph-products-are-univ-groups}(ii), where $Y_s \leq \Sym(\Omega_s)$ is the permutation group defined by the local action of the stabilizer of the $s$-panel $\mathcal P_s$ of $H$ containing a base chamber $c_0 \in \mathcal B$.  

Since the $H$-action on $\mathcal B$ is continuous, the stabilizer $\Stab_H(\mathcal P_s)$ is open in $H$. Therefore  $\Stab_{G_\Gamma(\mathbf X)}(\mathcal P_s)$ is a lattice in $\Stab_H(\mathcal P_s)$ (see \cite[\S2.C]{CapMon-KM}). Moreover, since the restriction of $\alpha$ to $ G_\Gamma(\mathbf X) = \mathcal U^\lambda_{\mathcal B}(\mathbf X) $ is the standard regular action of that group on $\mathcal B$, it follows that  $\Stab_{G_\Gamma(\mathbf X)}(\mathcal P_s)$ is a discrete $2$-ended group isomorphic to $X_s$, and acting  regularly on $\mathcal P_s$. In view of Lemma~\ref{lem:env-2-ended}, we deduce that there is a canonical envelope $L_s$ of $X_s$ in $\Sym(\Omega_s)$  
such that 
$$  \sigma(g, \mathcal P_s)   \in L_s$$
for all $g \in   \Stab_H(\mathcal P_s)$. 
By Proposition~\ref{prod:graph-products-are-univ-groups}(ii), we have
$$\alpha(H) \leq \mathcal U^{\lambda}_{\mathcal B}(\mathbf L) \rtimes \Aut(\Gamma).$$
%
%
It follows that 
$$ U^\lambda_{\mathcal B}(\mathbf X)  \leq   \alpha(H)   \leq \mathcal U^{\lambda}_{\mathcal B}(\mathbf L) \rtimes \Aut(\Gamma),$$
as required.
\end{proof}

%


\begin{cor}\label{cor:discrete-quot}
Every lattice envelope of $G_\Gamma(\mathbf X)$ has an infinite discrete quotient that is commensurable with $W(\Gamma)$. In the special case where $G_\Gamma(\mathbf X) = G(\Gamma)$, that quotient has a finite index subgroup isomorphic to $W(\Gamma)$. 
\end{cor}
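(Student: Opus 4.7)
The plan is to deduce the first assertion directly from Theorem~\ref{thm:Coxeter-action}, and the second (special) case by composing Theorem~\ref{thm:Main-latt-env} with Proposition~\ref{prop:implosion-to-Coxeter} via the natural legal colouring.

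First I would invoke Proposition~\ref{prop:envelope->coco:RAAG} (together with Remark~\ref{rem:envelope-product-groups} when $\Gamma$ is not irreducible) to ensure that $H^\circ$ is compact and $G_\Gamma(\mathbf X)$ is a cocompact lattice in $H$. Hence $H$ is compactly generated and quasi-isometric to $G_\Gamma(\mathbf X)$, which is in turn commensurable, and therefore quasi-isometric, to $G(\Gamma)$ by Proposition~\ref{prop:commensurable}. Theorem~\ref{thm:Coxeter-action} then provides a continuous homomorphism $\beta \colon H \to W(\Gamma) \rtimes \Aut(\Gamma)$ of finite-index image. Since the target is discrete and $W(\Gamma)$ is infinite, this yields an infinite discrete quotient of $H$ commensurable with $W(\Gamma)$, settling the first assertion.

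For the special case $G_\Gamma(\mathbf X) \cong G(\Gamma)$, each $X_s$ is infinite cyclic and its canonical envelope $L_s$ may be identified with $D_\infty$ acting on $\Omega_s = \mathbf Z$ by isometries. By Theorem~\ref{thm:Main-latt-env} together with Remark~\ref{rmk:cyclic-not-dihedral}, there is a continuous homomorphism with compact kernel $\alpha \colon H \to \widetilde{\Aut(\mathcal B)}$ with image satisfying $\mathcal U^\lambda_\mathcal B(\mathbf X) \leq \alpha(H) \leq \mathcal U^\lambda_\mathcal B(\mathbf L) \rtimes \Aut(\Gamma)$. I would next view the legal colouring $\lambda = (\lambda_s)_{s \in S}$ as a $\mathbf Z$-blow-up data $\mathscr H = \{h_\mathcal R := \lambda_s|_\mathcal R\}$; the containment $L_s \leq \Isom(\mathbf Z)$ combined with the $\Aut(\Gamma)$-equivariance of $\lambda$ yields the equivariance of $\mathscr H$ under $\alpha(H)$. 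Applying Proposition~\ref{prop:implosion-to-Coxeter} produces a homomorphism $\gamma \colon \alpha(H) \to \widetilde{\Aut(\mathcal B')} \cong W(\Gamma) \rtimes \Aut(\Gamma)$, and I set $\beta := \gamma \circ \alpha$.

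The remaining (and main) step is to verify $\beta(G(\Gamma)) = W(\Gamma)$: each Artin generator $x_s$ acts on the $s$-panel through the base chamber as translation by $1$ on $\Omega_s = \mathbf Z$, which reduces mod $2$ to a transposition; under the implosion $\tau \colon \mathcal B \to \mathcal B'$ this transposition corresponds to the reflection $s \in W(\Gamma)$, so $\beta(x_s) = s$ for all $s \in S$ and hence $\beta(G(\Gamma)) = W(\Gamma)$. Combining with the inclusion $\beta(H) \leq W(\Gamma) \rtimes \Aut(\Gamma)$ and finiteness of $\Aut(\Gamma)$, we find $\beta(H) = W(\Gamma) \rtimes F$ for some $F \leq \Aut(\Gamma)$, so $W(\Gamma)$ sits inside $\beta(H)$ as a finite-index subgroup. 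I expect the main obstacle to be the bookkeeping around equivariance of the blow-up data and the computation of the local action of $x_s$, but both reduce to elementary verifications once the setup is carefully laid out.
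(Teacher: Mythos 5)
Your argument is correct, but it follows a genuinely different route from the paper's own proof of this corollary. The paper composes the envelope homomorphism of Theorem~\ref{thm:Main-latt-env} with the \emph{algebraic} quotient map $\mathcal U(\mathbf Y) \to \mathcal U(\mathbf Y)/\mathcal U(\mathbf Y)^+ \cong \mathcal U(\mathbf Y/\mathbf Y^+) \cong W(\Gamma)$ supplied by Theorem~\ref{thm:U(F/F+)} (observing that $Y_s/Y_s^+$ has order~$2$ for each $s$), and then concludes by noting that the image of the lattice under a homomorphism with open kernel is a finite-index subgroup of the discrete target; this treats the general graph product and the special case $G(\Gamma)$ uniformly. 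You instead obtain the general assertion by short-circuiting through Theorem~\ref{thm:Coxeter-action} (i.e.\ through the Huang--Kleiner blow-up data and its implosion), after securing cocompactness exactly as the paper does, and for the special case you re-run Proposition~\ref{prop:implosion-to-Coxeter} on the standard legal colouring of $\mathcal B$ viewed as a $\mathbf Z$-blow-up data, computing $\beta(x_s)=s$ directly. Both versions rest on the same heavy input (Proposition~\ref{prop:envelope->coco:RAAG} plus Theorem~\ref{thm:Main-latt-env}/Theorem~\ref{thm:HK}), and your equivariance check for the colouring is the same observation the paper makes in the proof of Corollary~\ref{corintro:Salvetti}, so it is legitimate. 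What each approach buys: the paper's route identifies the Coxeter quotient structurally (it factors through $\mathcal U(\mathbf Y)\rtimes\Aut(\Gamma)$ killed along $\mathcal U(\mathbf Y)^+$), and handles both sentences of the corollary with one map; your route avoids Theorem~\ref{thm:U(F/F+)} altogether and makes fully explicit the point the paper leaves implicit in the second sentence, namely that $G(\Gamma)$ surjects onto $W(\Gamma)$ (generators mapping to Coxeter generators), so that the quotient genuinely contains a copy of $W(\Gamma)$ of finite index. Two small points to tidy up: in the special case you should use the $\beta$-quotient for \emph{both} claims (the statement speaks of ``that quotient''), which works since $\beta(H)$ is infinite, discrete and contains $W(\Gamma)$ with index at most $|\Aut(\Gamma)|$; and you should record, as in the proof of Theorem~\ref{thm:Coxeter-action}, that $\ker\beta$ is open (open chamber stabilizers on $\mathcal B$ give open stabilizers on $\mathcal B'$), so that the quotient is indeed discrete.
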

\begin{proof}
Let $H$ be a lattice envelope of $G_\Gamma(\mathbf X)$. 
Theorem~\ref{thm:Main-latt-env} provides a continuous homomorphism
$$H \to  \mathcal U(\mathbf L) \rtimes \Aut(\Gamma),$$
whose image of the first map contains $G_\Gamma(\mathbf X)$. 

For each $s \in S$, the subgroup $L^+_s$ of $L_s$ generated by point stabilizers has two orbits on $\Omega_s$, and  the quotient $L_s/L^+_s$ is cyclic of order~$2$. It follows from Theorem~\ref{thm:U(F/F+)} that the quotient $ \mathcal U(\mathbf L)/ \mathcal U(\mathbf L)^+$ is isomorphic to $W(\Gamma)$. Therefore we obtain a homomorphism 
$$\mathcal U(\mathbf L) \rtimes \Aut(\Gamma) \to W(\Gamma)  \rtimes \Aut(\Gamma)$$
with open kernel. In particular, its   restriction   to  the lattice $G_\Gamma(\mathbf X)$ maps to a lattice, i.e., a finite index subgroup of  $W(\Gamma)$. Thus $H$ has a discrete quotient which is commensurable with   the infinite Coxeter group $W(\Gamma)$.

If $G_\Gamma(\mathbf X) = G(\Gamma)$, then the restriction of the homomorphism $H \to W(\Gamma)  \rtimes \Aut(\Gamma)$ to $G(\Gamma)$ is the standard quotient map $G(\Gamma) \to W(\Gamma)$. Thus the image of $H$ contains $W(\Gamma)$.
\end{proof}

As a consequence of 		
Theorem~\ref{thm:Main-latt-env}, we obtain the following generalization of  \cite[Theorem~1.9]{HK18}. 

\begin{cor}\label{cor:different-envelopes}
Let $\Gamma$ be a finite irreducible graph with more than one vertex satisfying (R1), (R2) and (R3). Then there exist an infinite family  of groups $(G_n)$, that are all commensurable with $G(\Gamma)$, such that for all $m \neq n$, the group $G_m$ and $G_n$ do not have a common lattice envelope. 
\end{cor}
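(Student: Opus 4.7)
For each integer $n \geq 1$, fix a vertex $s_0 \in S$ and define a tuple of $2$-ended groups $\mathbf{X}^{(n)} = (X_s^{(n)})_{s \in S}$ by $X_s^{(n)} = \mathbb{Z}$ for $s \neq s_0$ and $X_{s_0}^{(n)} = \mathbb{Z} \times \mathbb{Z}/n\mathbb{Z}$. Set $G_n = G_\Gamma(\mathbf{X}^{(n)})$. By \cref{prop:commensurable}, each $G_n$ is commensurable with $G(\Gamma)$. The plan is to show that no two distinct members of this family share a lattice envelope; the obstruction is the order $n$ of the largest finite normal subgroup of $X_{s_0}^{(n)}$, which we will recover as a local invariant of any putative common envelope via \cref{lem:env-2-ended}.

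Suppose for contradiction that $m \neq n$ and that $H$ is a locally compact group containing both $G_m$ and $G_n$ as lattices. Applying \cref{thm:Main-latt-env} with the lattice $G_m$ and then with the lattice $G_n$ produces two continuous proper homomorphisms $\alpha_m, \alpha_n \colon H \to \widetilde{\Aut(\mathcal{B})}$ with compact kernel and closed image. By construction, $\alpha_m|_{G_m}$ and $\alpha_n|_{G_n}$ are the standard regular actions of $G_m$ and $G_n$ on the chambers of $\mathcal{B}$. Both $\alpha_m$ and $\alpha_n$ realize $H$ as a proper cobounded group of automorphisms of the chamber graph $\mathscr{C}(\mathcal{B})$, and the uniqueness clause of the Huang--Kleiner theorem \cite[Theorem~1.3]{HK18} (already invoked inside the proof of \cref{thm:Main-latt-env}) ensures that any such action of $H$ is determined up to conjugation by an element of $\Aut(\mathscr{C}(\mathcal{B}))$. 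After that conjugation we therefore may assume $\alpha_m = \alpha_n =: \alpha$.

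Fix a base chamber $c_0 \in \mathcal{B}$ and let $\mathcal{P}_{s_0}$ denote the $s_0$-panel through $c_0$. The local-action homomorphism $\Stab_{\alpha(H)}(\mathcal{P}_{s_0}) \to \Sym(\mathcal{P}_{s_0})$ has kernel the compact pointwise stabilizer of $\mathcal{P}_{s_0}$, so its image $L$ is a totally disconnected locally compact group faithfully permuting $\mathcal{P}_{s_0}$. Since $\alpha$ restricts to the regular action on both $G_m$ and $G_n$, the panel stabilizers $X_{s_0}^{(m)} = \Stab_{G_m}(\mathcal{P}_{s_0})$ and $X_{s_0}^{(n)} = \Stab_{G_n}(\mathcal{P}_{s_0})$ embed in $L$ as discrete cocompact subgroups, each acting regularly on $\mathcal{P}_{s_0}$. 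Each group $X_{s_0}^{(k)} = \mathbb{Z} \times \mathbb{Z}/k\mathbb{Z}$ has largest finite normal subgroup of order $k$ and cyclic quotient, so \cref{lem:env-2-ended} applied successively with $X = X_{s_0}^{(m)}$ and $X = X_{s_0}^{(n)}$ forces the (unique) largest compact normal subgroup of $L$ to have all orbits on $\mathcal{P}_{s_0}$ of size exactly $m$ and also of size exactly $n$. Hence $m = n$, a contradiction.

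The main obstacle in this plan is the conjugacy of the two actions $\alpha_m$ and $\alpha_n$, which hinges on a careful appeal to the uniqueness statement in \cite[Theorem~1.3]{HK18}; once this is in hand, the rest of the argument is a direct comparison of local structural invariants via \cref{lem:env-2-ended}.
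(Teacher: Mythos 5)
Your overall strategy is the paper's: apply \cref{thm:Main-latt-env}, use the uniqueness clause of \cite[Theorem~1.3]{HK18} to place both lattices inside a single action on $\mathscr C(\mathcal B)$, and then extract a numerical invariant of panel stabilizers from the $2$-ended lemmas of Section~5 (the paper uses \cref{lem:lattices-in-Aut(Gm)} where you apply \cref{lem:env-2-ended} twice; that difference is inessential, and your way of invoking the uniqueness clause is consistent with how the paper uses it). The genuine gap is the combination of your asymmetric choice of the family with the sentence ``Since $\alpha$ restricts to the regular action on both $G_m$ and $G_n$''. The conjugating element $c$ furnished by the uniqueness statement lies in $\Aut(\mathscr C(\mathcal B)) \cong \Aut(\mathcal B)\rtimes\Aut(\Gamma)$ and may permute types. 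After identifying $\alpha_m$ with $\alpha_n$, the restriction $\alpha|_{G_n}$ is only a $c$-conjugate of the standard regular action, so $\Stab_{\alpha(G_n)}(\mathcal P_{s_0})$ is isomorphic to the vertex group of $G_n$ at the vertex $\pi(c)(s_0)$, not necessarily at $s_0$. Since you placed $\mathbf Z\times\mathbf Z/n\mathbf Z$ at a single vertex only, this matters: graphs satisfying (R1)--(R3) can have non-trivial, even vertex-transitive, automorphism groups (e.g.\ $n$-cycles with $n\geq 5$), so $\pi(c)(s_0)\neq s_0$ is possible, in which case that stabilizer is $\mathbf Z$ and your comparison at $\mathcal P_{s_0}$ yields $W$-orbits of size $m$ and of size $1$. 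For $m,n\geq 2$ this still forces a contradiction ($m=1$), but you did not make that case analysis, and for pairs involving $G_1=G(\Gamma)$ (your family starts at $n=1$) no contradiction arises at $\mathcal P_{s_0}$ at all, so the argument as written does not cover all pairs.

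The fix is exactly the paper's construction: take the \emph{same} $2$-ended group $\mathbf Z/n\mathbf Z\times\mathbf Z$ at every vertex of $\Gamma$, so that every panel stabilizer of $G_n$, in any $\Aut(\mathscr C(\mathcal B))$-conjugate of its standard action, is isomorphic to $X_n$ and the type-permuting ambiguity becomes harmless. Alternatively, keep your family but also run the comparison at a panel of the type that carries the $\mathbf Z\times\mathbf Z/n\mathbf Z$ factor for $\alpha(G_n)$ (there the $\alpha(G_m)$-stabilizer is $\mathbf Z$ when $\pi(c)(s_0)\neq s_0$), which restores the contradiction in the remaining case; either repair makes your Lemma~\ref{lem:env-2-ended} argument go through.
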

\begin{proof}
For $n \geq 1$, we define $G_n$ as the graph product over $\Gamma$ of the $2$-ended group $Z_n = \mathbf Z/n\mathbf Z \times \mathbf Z$. 

Suppose that $G_m$ and $G_n$ have a common lattice envelope $H$. Let $\alpha \colon H \to \Aut(\mathcal B) \rtimes \Aut(\Gamma)$ be the  continuous proper homomorphism obtained from applying  Theorem~\ref{thm:Main-latt-env} to  the lattice embedding $G_m \leq H$. It then follows from \cite[Theorem~1.3]{HK18} that the restriction of $\alpha$ to $G_n$ is conjugate by an element of $\Aut(\mathscr C(\mathcal B))$ to the standard regular action of $G_n$ on $\mathcal B$ (as in section~\ref{ss:RABs}). Therefore, considering a panel $\mathcal P$, it follows that $\Stab_{G_n}(\mathcal P) \cong X_n$ and $\Stab_{G_m}(\mathcal P) \cong X_m$ both act regularly on $\mathcal P$. In view of Lemma~\ref{lem:lattices-in-Aut(Gm)} and Theorem~\ref{thm:Main-latt-env}, this implies that  $m = n$. 
\end{proof}
	
%

\subsection{The automorphism group of the Cayley graph of a RAAG}		

\begin{proof}[Proof of Corollary~\ref{corintro:Salvetti}]
Let $\mathcal B$ be the building of $G(\Gamma)$, and let $\mathcal U^\lambda_\mathcal B(\mathbf X) = \mathcal U(\mathbf X) \leq \Aut(\mathcal B)$ be the universal group with regular cyclic local actions, so that $G(\Gamma)$ is isomorphic to $\mathcal U(\mathbf X) \leq \Aut(\mathcal B)$ by Proposition~\ref{prod:graph-products-are-univ-groups}. Thus  $\lambda$ is the  legal  colouring afforded by Lemma~\ref{lem:legal-colouring}. For each $s \in S$, we may thus view $\lambda_s$ as taking its values in $\mathbf Z \cong X_s$. Let also $L = \mathcal U(\mathbf Y) $ be the universal group with infinite dihedral local actions: for each $s \in S$, the permutation group $Y_s \leq \Sym(\mathbf Z)$ is dihedral and contains the cyclic group $X_s \cong \mathbf Z$ as an index~$2$ subgroup. As remarked in \S\ref{sec:blow-up}, the legal colouring $\lambda$ may be viewed as a $\mathbf Z$-blow-up data, which we denote  by $\mathscr H$. Moreover, the associated graph $|\mathcal B|_\mathscr H$ is isomorphic to $X(\Gamma)^{(1)}$. By definition, the blow-up data $\mathscr H$ is $L$-equivariant. It follows that the $L$-action on $\mathcal B$ preserves the graph  $|\mathcal B|_\mathscr H$. Hence $L \leq \Aut(X(\Gamma)^{(1)})$. 

We have seen in Proposition~\ref{prop:LattEnv} that $L$ is non-discrete as soon as $\Gamma$ is not the complete graph. Conversely, if $\Gamma$ is the complete graph, then $G(\Gamma)$ is free abelian and  $X(\Gamma)$ is the standard cubulation of the Euclidean space, so that $\Aut(X(\Gamma)^{(1)})$ is discrete. This proves (i). 

Since $X(\Gamma)^{(1)}$ is a connected locally finite graph, its automorphism group is a totally disconnected locally compact group. Since $G(\Gamma)$ acts regularly on the vertices of $X(\Gamma)^{(1)}$, it is a cocompact lattice in  $\Aut(X(\Gamma)^{(1)})$. In particular, $G(\Gamma)$ is quasi-isometric to $\Aut(X(\Gamma)^{(1)})$. Hence the assertions (ii), (iii) and (iv) follow from Theorem~\ref{thmintro:2} and Theorem~\ref{thm:Main-latt-env}: indeed, we have seen above that  $L \leq \Aut(X(\Gamma)^{(1)})$, while Theorem~\ref{thm:Main-latt-env} implies that $\Aut(X(\Gamma)^{(1)}) \leq L \rtimes \Aut(\Gamma)$. 
\end{proof}

\bibliographystyle{amsalpha} 
\bibliography{RAAGs}

\end{document}